\documentclass{article}

%%   <-
%%   <- Add your packages here   
%%   <-
%%

\usepackage[T1]{fontenc}

\usepackage{amsmath}
\usepackage{amssymb}

\usepackage[dvipsnames]{xcolor}
\usepackage[colorlinks=true,
            linktocpage=true,
            citecolor=RubineRed,
            linkcolor=RoyalBlue]{hyperref}
\usepackage[numbers]{natbib}
  \setlength{\bibsep}{2pt plus 0.3ex}
\usepackage[a4paper,margin=2.5cm]{geometry}
\usepackage[noabbrev,capitalise]{cleveref}
\usepackage{doi}

%%%% Other usepackages
\usepackage[dvipsnames]{xcolor}
\usepackage{stmaryrd}
\usepackage[nointegrals]{wasysym}  % for the smileys
\usepackage{fontawesome}  % for the star symbol
\usepackage[all]{xy}
\usepackage{tikz}
  \usetikzlibrary{arrows,arrows.meta,matrix}

\usepackage{pifont}

%%%% Package for the bibliography
\usepackage[numbers]{natbib}
  \setlength{\bibsep}{2pt plus 0.3ex}
\usepackage{doi}

%%%% Macros Jim
\newcommand{\mo}{\mathfrak}
\newcommand{\fns}{\footnotesize}
\renewcommand{\iff}{\quad\text{iff}\quad}
\renewcommand{\phi}{\varphi}

\DeclareMathOperator{\incl}{in}
\newcommand{\llb}{\llbracket}
\newcommand{\rrb}{\rrbracket}

%%%% Symbols
%\newcommand{\wsmile}{\mathord{\textup{\wsmiley}}}
%\newcommand{\Frown}{\mathord{\textup{\frownie}}}

\newcommand{\lan}{\mathcal{L}}
\newcommand{\Star}{\text{\scalebox{.75}{\faStarO}}}
\DeclareMathOperator{\st}{st}
\newcommand{\subs}{\mathrel{\rightsquigarrow}}
\newcommand{\logeq}{\mathrel{\leftrightsquigarrow}}
\newcommand{\Sim}{\text{S}}
\newcommand{\Forw}{F}
\newcommand{\Back}{B}
\newcommand{\FOL}{\mathrm{FOL}}
\newcommand{\Var}{\mathrm{Var}}
\newcommand{\combor}[2]{(#1\mid \mid #2)}
\newcommand{\comband}[2]{(#1\;\& \; #2)}
\newcommand{\classneg}{{\sim}}
\newcommand{\oldwsmile}{\scalebox{1}{$\mathrlap{\Circle}\textcolor{black}{\smallsmile}$}}

\newcommand{\aff}{\mathord{\text{\reflectbox{\rotatebox[origin=c]{90}{$\classneg$}}\kern.3ex}}}
\newcommand{\Simin}[3][]{(#2,#3)\in\Sim_{#1}}
\newcommand{\Simnin}[3][]{(#2,#3)\notin\Sim_{#1}}
\newcommand{\connective}{\heartsuit}

%%%% Other macros
\newcommand{\TS}[2]{\llb{#1}\rrb^{\mo{#2}}}

%%%% Restoration connectives
\renewcommand{\smile}{\mathord{\smallsmile}}
\renewcommand{\frown}{\mathord{\smallfrown}}

%%%% Consequence and compatibility
\newcommand{\cons}{\vartriangleright}
\newcommand{\comp}{\blacktriangleright}

%%%% box-plus, box-minus, diamond-plus, diamond-minus
\newcommand{\boxp}{\texttt{[+]}}
\newcommand{\boxm}{\texttt{[-]}}
\newcommand{\diap}{\texttt{<+>}}
\newcommand{\diam}{\texttt{<->}}
%%%% for the squares of oppositions
\newcommand{\contra}{\ensuremath{{\times}}}%
\newcommand{\dual}{{\ensuremath{{\downharpoonleft}\!{\upharpoonright}}}}%
\newcommand{\ant}{\ensuremath{\rotatebox{90}{$\between$}}}%

\usepackage{graphicx}
\usepackage{mathtools}

\newcommand{\newstackrel}[2]{%
   \mathrel{\vbox{\offinterlineskip\ialign{%
     \hfil##\hfil\cr
     $\scriptscriptstyle#1$\cr
     %\noalign{\kern0ex} % \smile and \frown
     \noalign{\kern-0.25ex} % \smallsmile and \smallfrown
     $#2$\cr
 }}}}

\newcommand{\struc}[1]{\langle#1\rangle}

%%%% Restoration modalities
%% This is not the most correct solution, but it works.
%% The \text in the de\-fi\-ni\-tion ensures that symbols scale when used as subscript
\usepackage{stackengine}
\newcommand{\wsmile}{\mathord{\textup{\stackon[-1.85pt]{\normalsize${\Circle}$}{\tiny $\smile$}}}}
\newcommand{\wfrown}{\mathord{\textup{\stackunder[-1.65pt]{\normalsize${\Circle}$}{\tiny $\frown$}}}}
\newcommand{\vsmile}{\mathord{\textup{\stackon[-1.85pt]{\normalsize${\CIRCLE}$}{\tiny $\smile$}}}}
\newcommand{\vfrown}{\mathord{\textup{\stackunder[-1.65pt]{\normalsize${\CIRCLE}$}{\tiny $\frown$}}}}
\newcommand{\classconn}{\mathrlap{\wsmile}{\wfrown}}

%%%% Simulation symbols
%% similarity:
\newcommand{\simul}{\mathrel{\kern1pt\underline{\kern-.5pt\to\kern-.5pt}\kern1pt}}

\newcommand{\psimul}{\rightleftarrows}
\newcommand{\symsim}{\mathrel{\kern1pt\underline{\kern-.5pt\leftrightarrow\kern-.5pt}\kern1pt}}

%%%% Command 'myitem' for named enumerate items that work with \ref
\newcommand{\myitem}[1]{
  \renewcommand{\labelenumi}{(\theenumi) }
  \renewcommand{\theenumi}{#1}
  \item
}

%%%% CONTINUED FROM ZML PACKAGE

\usepackage[noabbrev,capitalise]{cleveref}

%% ----------------------------------------------------------------
%%
%%   Define theorem environments below, as follows:

\usepackage{amsthm}

  \theoremstyle{definition}
  \swapnumbers
    \newtheorem{para}{}[section]
    \newtheorem{definition}[para]{Definition}
    
    \newtheorem{remark}[para]{Remark}
    
  \theoremstyle{plain}
    \newtheorem{lemma}[para]{Lemma}
    
    \newtheorem{corollary}[para]{Corollary}
    \newtheorem{theorem}[para]{Theorem}
    \newtheorem{proposition}[para]{Proposition}

%% the following code adds triangles indicating the end of an example
\newenvironment{example}
  {\pushQED{\qed}\examplex}
  {\popQED\endexamplex}

%% ----------------------------------------------------------------
%%
%%   Title of the paper:

\title{Intrinsic and relative characterization results \\
for logics with negative modalities}

\author{Jim de Groot$^1$, Jo\~ao Marcos$^2$ and Rodrigo Stefanes$^2$ \\ \medskip
\footnotesize{$^1$ Mathematical Institute, University of Bern, Bern, Switzerland} \\
\footnotesize{$^2$ Universidade Federal de Santa Catarina, Florian\'opolis, Brazil}}

\date{}

% \corrauthor{Rodrigo Stefanes}{rodrigoamstefanes@gmail.com}

% \thanks{%
% 	% The first author was funded by Someone. 
% 	The second author acknowledges the support of CNPq and UNAM / PREI DGAPA.
% The third author is a CNPq Scholarship Recipient – Brazil.
% }

%%   If the list of authors is too long for the header, please provide a
%%   shorter list of authors with the following macro: 
%%
%%     \shortauthors{J.\ Doe et al.}
%%
%% ----------------------------------------------------------------
%%
%%   The abstract is provided by:

\begin{document}

\maketitle

\begin{abstract}
\noindent
  We introduce simulations for modal logics with subclassical negations and restoration modalities, establish an adequacy theorem, and prove intrinsic (Hennessy--Milner-type) and relative (Van Benthem-type) characterization results. These results identify each restorative language with the fragment of first-order logic invariant under its simulations and delineate the expressive profile of modal logics with non-classical negations.
\end{abstract}

%%%%%%%%%%%%%%%%%%%%%%%%%%%%%%%%%%%%%%%%%%%%%%%%%%%%%%%%%%%%%%%%%%%%%%%%%%%%%%%%
\section{Introduction}

\noindent
Two fundamental results in the model theory of modal logic are associated to the names of Hennessy, Milner, and Van Benthem.
While first-order logic cannot distinguish between isomorphic models, since such models satisfy exactly the same first-order formulas, the \guillemotleft expressive granularity\guillemotright\ of modal logic favors, for the same job, the weaker notion of bisimulation.
In this respect, the Hennessy--Milner theo\-rem provides an \emph{intrinsic characterization} of modal expressivity: 
it establishes a correspondence between bisimilarity, for certain classes of models, and modal equivalence.
The Van Benthem theo\-rem, in turn, offers a \emph{relative characterization} of expressivity: it demonstrates that the modal language is precisely as expressive as the bisimulation-invariant fragment of first-order logic.

In the present work, we arrive at Hennessy--Milner-type and Van Benthem-type results for modal logics and languages endowed with negative modalities and additional restoration operators that enable these modalities to recover the behavior of classical negation. We begin by briefly situating this study within its broader theoretical setting.

\paragraph{Negative modalities over a positive basis.} 
Assuming logic to be grounded on the complementary and mutually exclusive judgments of \emph{assertion} and \emph{denial}, one may take the \emph{conjunction} of two formulas to be asserted iff both formulas are asserted; dually, the \emph{disjunction} of two formulas may be taken to be denied iff both formulas are denied; the nullary forms of these connectives, called \emph{top} and \emph{bottom}, are by default asserted and denied, respectively.
\emph{Negation}, in turn, is construed as an operation that inverts judgments: it denies an asserted formula and asserts a denied one. 
A paracomplete negation arises when both a formula and its negation are jointly denied,  
while not denying all other formulas;
a paraconsistent negation arises when both a formula and its negation are jointly asserted, 
while not asserting all other formulas.
In what follows, we focus on logics containing negations with a modal character, for which a certain \emph{global contraposition principle} is expected to hold: whenever one formula follows from another, the negation of the latter follows from the negation of the former. Such behavior often results as a by-product of a \emph{classical-like negation}, namely one that is neither paracomplete nor paraconsistent, and that systematically converts all assertions into denials and vice versa. In the present study, nonetheless, negations will not be automatically assumed to behave classically.%

\paragraph{Restorative modal logics.}
The modal negations considered here fail to respect certain fundamental classical principles. To \emph{recover} such lost assumptions and \emph{recapture} the full deductive power of classical propositional logic, we introduce a collection of standard \emph{restoration connectives} into the language: 
the consistency of a formula is denied precisely when the formula is asserted together with its (paraconsistent) negation; dually, the undeterminedness of a formula is asserted precisely when the formula is denied together with its (paracomplete) negation. In this framework, notions of [\emph{in}]\emph{consistency} and [\emph{un}]\emph{determinedness} also receive a modal interpretation.
Modal logics enriched with these connectives, namely subclassical negations and their restoration companions, will here be called \emph{restorative modal logics}. As will be shown, they correspond to a highly expressive fragment of classical first-order logic, a fragment that is capable of simulating the behavior of non-classical negations while fully reinstating classical reasoning, despite the fact that, in general, no classical-like negation is therein definable.

\paragraph{Bisimulations.}
  A bisimulation is a relation between worlds of Kripke models that preserves and reflects the truth of propositional letters and mirrors the accessibility relation in both directions.
  As a result, worlds linked by a bisimulation satisfy exactly the same formulas. 
  Also known as \emph{p-relations} and \emph{zig-zag relations},
  bisimulations were first introduced by Van Benthem as part of his work on
  correspondence theory~\cite{Ben76,Ben83,Ben84}.
  They were independently discovered in computer science
  as an equivalence relation between process graphs~\cite{Par81,HenMil85},
  and later employed as a notion of equality in
  non-well-founded set theory~\cite{Acz88,BarMos96,Bal98-PhD}.
  
  From a (modal) logical point of view, bisimulations often serve a double purpose.
  First, they provide a structural notion that guarantees
  logical equivalence. Here by \guillemotleft structural\guillemotright\ we mean that the de\-fi\-ni\-tion
  of a bisimulation only refers to the interpretation of propositional letters
  and to the modal accessibility relation, 
  but not to the interpretation of arbitrary formulas. 
  In special cases, the converse also holds: if two
  worlds satisfy the same formulas then they are linked by some bisimulation.
  This property is called the \emph{Hennessy--Milner property}, and classes of models satisfying it are called \emph{Hennessy--Milner classes},
  after the authors of~\cite{HenMil85}.
  Second, bisimulations establish a precise correspondence between modal logic and first-order logic: under its usual interpretation, the standard modal language corresponds exactly to the bisimulation-invariant fragment of first-order logic with one free variable.
  This result was first proven by Van Benthem~\cite{Ben76}, and is commonly referred to as the \emph{Van Benthem characterization theo\-rem}.
  
  Following Hennessy and Milner's theo\-rem and Van Benthem's theo\-rem for normal modal logic in its customary language with positive modalities, 
  analogous results were obtained for a wide range of logics, including non-normal modal logics~\cite{Han03,HanKupPac09}, modal fixpoint logics~\cite{JanWal95,EnqSeiVen19}, 
  and (bi-)intuitionistic (modal) logic \cite{Pat97,Bad16,GroPat21-hm}.
  Each modal language is equipped with its own appropriate notion of bisimulation.

\paragraph{Simulations.}
  The symmetrical nature of bisimulations entails that they automatically preserve the truth of negations. While this feature is appropriate for modal extensions of classical propositional logic, it is undesirable when seeking characterization results for logics lacking classical-like negations,
  such as positive modal logic~\cite{KurRij97}.
  In such settings, bisimulations are sometimes replaced with weaker relations known as \emph{simulations}. The key difference between bisimulations
  and simulations is that the latter preserve the truth of formulas without necessarily reflecting it.
  
  Similar strategies have been employed, for instance, in the characterization of non-distributive positive
  logic as a fragment of first-order logic~\cite{Gro24-vb-ll}
  and in analyses of fragments of the calculus of binary relations~\cite{FleEA15}.
 Since the restorative modal logics examined in this paper can all be viewed as positive logic augmented with unary modal operators, we likewise make use of simulations.

\paragraph{Contributions.}
  In this paper we develop a notion of simulation tailored to restorative modal logics.
  More precisely, we define a notion of simulation that is parametric in a
  similarity type and that describes the intensional operators in the language.
  This poses an interesting challenge: although simulations
  are used to prevent the preservation of the truth of negations, some of the modal operators
  under investigation exhibit an inherently negative character 
  that must, in fact, be preserved.
  For one of the restoration modalities, the so-called \guillemotleft consistency operator\guillemotright, this leads to a notion of
  simulation closely resembling the one in~\cite[Section~5]{Fan15}.

  After identifying the correct notion of simulation, in each case, we establish 
  intrinsic (that is, Hennessy--Milner-type) characterization results
  for the class of modally saturated Kripke models.
  This, in turn, gives rise to 
  relative (that is, Van Benthem-type) characterization results 
  for the modal languages that are here under scrutiny. 
  Finally, we outline how analogous results can be obtained for restorative modal logics with a classical-like negation, and we speculate how these findings may
  fit into a general pattern of simulations for \guillemotleft combined modalities\guillemotright.

\paragraph{Structure of the paper.}
  After recalling negative modalities, restoration modalities and
  restorative modal logics
  in Section~\ref{sec:restoration}, from an abstract viewpoint, and after recalling Kripke semantics for %classical
  normal modal logics in Subsection~\ref{subsec:classical}, we introduce interpretations
  for negative and restoration modalities in Section~\ref{subsec:Kripke-restoration}.
  Then, Section~\ref{subsec:negation} discusses, from a modal perspective, when and why classical negation fails to be definable in the present framework.
  In Section~\ref{sec:sim} we introduce simulations for restorative modal logics,
  parametric in their modal similarity types: each modal operator comes with
  its own simulation condition, which is postulated when relevant.
  In Section~\ref{sec:sim-between} we briefly discuss the difference between
  simulations ranging over a single model and simulations between models.
  Next, in Section~\ref{sec:hm} we prove 
  an intrinsic characterization 
  result,
  which is used in Section~\ref{sec:vb} to derive 
  a relative characterization result.
  We then sketch, in Section~\ref{sec:symm}, how to obtain analogous characterization results in the presence of classical-like negations.
  Finally, in Section~\ref{sec:combined} we discuss
  a possible generalization of our work to so-called combined modalities,
  and we wrap things up in Section~\ref{sec:conc}.

%\clearpage
%%%%%%%%%%%%%%%%%%%%%%%%%%%%%%%%%%%%%%%%%%%%%%%%%%%%%%%%%%%%%%%%%%%%%%%%%%%%%%%%

\section{Negative modalities and their restoration companions}\label{sec:restoration}

\noindent
We will write $\Pi\comp\Sigma$ to indicate that asserting the formulas in~$\Pi$ is \emph{judgment-compatible} with denying the formulas in~$\Sigma$.  \emph{Consequence} relations may be conceived as \emph{judgment-incompatibility} relations, and accordingly we write $\Pi\cons\Sigma$ (read as \guillemotleft$\Sigma$ follows from $\Pi$\guillemotright) to indicate that $\Pi\comp\Sigma$ fails to be the case.

\paragraph{Negations of all flavors.} 
Any unary connective~$\neg$ that allows for the assertion of an propositional letter~$p$ to be judgment-compatible with~$\neg p$ (that is, $p\comp\neg p$), and likewise allows for the denial of~$p$ to be judgment-compatible with~$\neg p$ (that is, $\neg p\comp p$) will here deserve the appelation of \emph{negation}~\cite{Mar05}.  A \emph{classical-like negation}~$\sim$ is a negation such that (Neg$_i^\classneg$) $p,{\sim} p\cons q$ and (Neg$_u^\classneg$) $q\cons p,{\sim} p$, for any two propositional letters~$p$ and~$q$.  A~ne\-ga\-tion that fails (Neg$_i^\classneg$) is called \emph{paraconsistent}, whereas a negation that fails (Neg$_u^\classneg$) is called \emph{paracomplete}.

\paragraph{Classical and normal modal logics.} 
The defining feature of \emph{classical modal logics} \cite{Seg82,Che80} is the so-called \emph{replacement property} (also known as \emph{congruentiality}): if two formulas~$\alpha$ and~$\beta$ are \emph{logically equivalent} (that is, $\alpha\cons\beta$ and $\beta\cons\alpha$), then $\varphi[p\mapsto \alpha]$ is logically equivalent to $\varphi[p\mapsto \beta]$, where~$\varphi$ is an arbitrary formula and $\varphi[p\mapsto \gamma]$ indicates that all occurrences of the propositional letter~$p$ in~$\varphi$ have been substituted by the formula~$\gamma$.  
Intuitively, in classical modal logics equivalent formulas are logically indistinguishable, or interchangeable in all contexts. In other words, from an algebraic viewpoint, all \emph{schematic contexts} $\varphi(p)$ are quotient-compatible with the underlying notion of logical equivalence.  If each connective of a given logic ---thought of as inducing, on each of its arguments, a schematic context--- is quotient-compatible with the underlying notion of logical equivalence, the replacement property clearly holds.

Given a unary connective~$\connective$, sufficient conditions for ensuring its quotient-compatibility with logical equivalence consist in, for all formulas~$\alpha$ and~$\beta$, assuming \emph{modal normality} in one of the two following guises~\cite{DodMar14}: 
\smallskip

\begin{tabular}{rl}
  {[prs]} & if $\alpha\cons\beta$, then $\connective\alpha\cons\connective\beta$ \\
  {[rev]} & if $\alpha\cons\beta$, then $\connective\beta\cons\connective\alpha$ \\
\end{tabular}
\smallskip

\noindent
The \guillemotleft preservation\guillemotright\ condition [prs] can be viewed as the monotonicity of~$\connective$ with respect to~$\cons$, and is characteristic of \emph{positive modalities}.
The \guillemotleft reversal\guillemotright\ condition [rev], on the other hand, can be viewed as antitonicity of~$\connective$ with respect to~$\cons$ and is characteristic of \emph{negative modalities}.  In this sense, it is clear that the box and the diamond connectives of usual modal languages are positive modalities, whereas negation in intuitionistic (or classical) logic is a negative modality.

\paragraph{Standard connectives, positive and negative modalities.} 
\emph{Standard} conjunctions, disjunctions, tops and bottoms are characterized by the following principles:
\smallskip

\begin{tabular}{llll}
  {[StdC]} & $\Pi,\varphi_1\land\varphi_2\cons\Sigma$ \iff $\Pi,\varphi_1,\varphi_2\cons\Sigma$
  \hspace{2cm} &
  {[StdT]} & $\Pi,\top\cons\Sigma$ \iff $\Pi\cons\Sigma$\\
  {[StdD]} & $\Pi\cons\varphi_1\lor\varphi_2,\Sigma$ \iff $\Pi\cons\varphi_1,\varphi_2,\Sigma$ &
  {[StdB]} & $\Pi\cons\bot,\Sigma$ \iff $\Pi\cons\Sigma$\\
\end{tabular}
\smallskip

\noindent 
for an arbitrary collection of formulas $\Pi\cup\Sigma\cup\{\varphi_1,\varphi_2\}$.
In the following, consider a logic containing all the above-mentioned standard connectives, let~$+$ denote an arbitrary unary positive modality, and let~$-$ denote an arbitrary unary negative modality (that is, assume $+$ respects [prs] and assume $-$ respects [rev]).  In that case, it is easy to check the following, for arbitrary propositional letters $p$ and $q$ \cite{DodMar14}: 
\smallskip

\begin{tabular}{llll}
    {[PM1.1]} & 
    $+(p\land q)\cons+p\land+q$ 
    \hspace{2cm} 
    & {[PM2.1]} & 
    $+p\lor+q\cons+(p\lor q)$ 
    \\
    {[DM1.1]} & 
    $-(p\lor q)\cons-p\land-q$ 
    \hspace{2cm} 
    & {[DM2.1]} & 
    $-p\lor-q\cons-(p\land q)$ 
    \\
\end{tabular}
\smallskip

The converses of the latter conditions characterize the behavior of \guillemotleft boxes\guillemotright\ and \guillemotleft diamonds\guillemotright\ in the presence of the above standard connectives.  Specifically: 
\smallskip

\begin{tabular}{llll}
    & \underline{\smash{\emph{box-plus}}}: \boxp & & \underline{\smash{\emph{diamond-plus}}}: \diap \\
    {[PM1.2]} & 
    $+p\land+q\cons+(p\land q)$ 
    \hspace{2cm} 
    & {[PM2.2]} & 
    $+(p\lor q)\cons+p\lor+q$ 
    \\
    {[PM1.0]} & $\emptyset\cons+\top$ & {[PM2.0]} & $+\bot\cons\emptyset$\\[1mm]
    & \underline{\smash{\emph{box-minus}}}: \boxm & & \underline{\smash{\emph{diamond-minus}}}: \diam \\
    {[DM1.2]} & 
    $-p\land-q\cons-(p\lor q)$ 
    \hspace{2cm} 
    & {[DM2.2]} & 
    $-(p\land q)\cons-p\lor-q$ \\
    {[DM1.0]} & $\emptyset\cons-\bot$ & {[DM2.0]} & $-\top\cons\emptyset$\\
\end{tabular}
\smallskip

\noindent 
Well-known examples of box-plus and diamond-plus connectives are given, respectively, by the positive modalities~$\Box$ and~$\Diamond$ in normal modal logics.
Classical-like negations verify all of the [DM$\cdots$] (\guillemotleft De Morgan\guillemotright) conditions.
Subclassical box-minus connectives constitute typical examples of paracomplete negations (of which, again, intuitionistic negation constitutes a canonical illustration), whereas subclassical diamond-minus connectives constitute typical examples of paraconsistent negations.

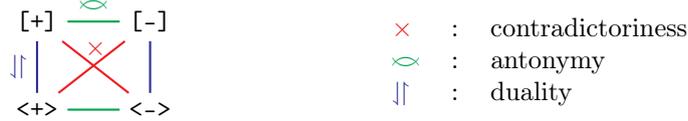
\begin{figure}[h!]
  \begin{minipage}{0.45\textwidth}
  \centering
\begin{tikzpicture}
  \matrix [matrix of math nodes,row sep=.7cm,column sep=.7cm,ampersand replacement=\&]
  {
      |(A)| \boxp
   \& |(E)| \boxm \\ 
      |(I)| \diap
   \& |(O)| \diam \\ 
  };
  \begin{scope}[every node/.style={auto}]
     {\draw[-,Red,thick]   (A) to node[above] {{$\contra$}} (O);}
     {\draw[-,Red,thick]   (E) to node        {}            (I);}
     {\draw[-,Blue,thick]  (A) to node[left]  {{$\dual$}}   (I);}
     {\draw[-,Blue,thick]  (E) to node        {}            (O);}
     {\draw[-,Green,thick] (A) to node        {{$\ant$}}    (E);}
     {\draw[-,Green,thick] (I) to node        {}            (O);}
  \end{scope}
\end{tikzpicture}
\end{minipage}
\begin{minipage}{0.45\textwidth}
\begin{tabular}{lcl}
$\textcolor{Red}{\contra}$ & : & contradictoriness \\
$\textcolor{Green}{\ant}$ & : & antonymy \\
$\textcolor{Blue}{\dual}$ & : & duality \\
\end{tabular}
\end{minipage}
    \caption{Square of Oppositions involving positive and negative modalities.}
    \label{fig:modalities}
\end{figure}

\paragraph{Restoration connectives.} 
When a classical-like negation is replaced by a subclassical one, some assumption or principle characteristic of classical negation is inevitably absent.  A richer language may be often devised, though, in order to allow for such lost assumptions to be explicitly recovered.  This is the very idea behind the introduction of the so-called \emph{restoration connectives}: here, their role is to allow for the properties of classical negation to be recaptured.

Recall that a paraconsistent negation~$\smile$ is a negation that allows, by rejecting (Neg$_i^{\smile}$), for the joint assertion of a formula~$\varphi$ and its negation~$\smile\varphi$, while still denying some other formula.  This is a sign of inconsistency.  One may entertain a language that contains a connective~$\vsmile$ that tracks down \emph{$\smile$-inconsistency}, in such a way that the formula $\vsmile p$ is asserted exactly when~$p$ and $\smile p$ are at once asserted; meanwhile, \emph{$\smile$-consistency} could be tracked down by a connective~$\wsmile$ in such a way that the formula $\wsmile p$ is asserted exactly when at least one among~$p$ and $\smile p$ is denied (see [Con$\cdots$], below).  
A legitimate consistency companion for the paraconsistent negation~$\smile$ would actually be a restoration connective~$\wsmile$ that allows for any two formulas among $\varphi$, $\smile \varphi$ and $\wsmile \varphi$ to be simultaneously asserted ([LegCa] and [LegCb]), yet not for all of the three of them to be asserted at once ([Con1]). 
We note that \emph{Logics of Formal Inconsistency} have originally been defined  \cite{carnielli:marcos:taxonomy,carnielli:coniglio:marcos:HPL} precisely as $\wsmile$-gently explosive ([Con1]) $\smile$-paraconsistent logics respecting the legitimacy conditions [LegCa] and [LegCb].  The intuition is that $\wsmile$ would be used to explicitly signal a \guillemotleft consistency assumption\guillemotright\ concerning the behavior of~$\smile$.  
If all negated formulas satisfy this assumption, the \guillemotleft classicality\guillemotright\ principle 
$\cons\wsmile p$ holds universally, making the presence of the connective $\wsmile$ in the language superfluous.
The \emph{standard} form of \emph{gentle explosion} is supposed to respect additional conditions, [Con2a] and [Con2b], that act as converse to [Con1], as they guarantee that denying either a formula or its negation implies asserting the consistent behavior of this formula with respect to negation.  
Thus, for standard gently explosive logics, asserting the negation-inconsistency of a formula is both a necessary condition (by \guillemotleft gentleness\guillemotright) and a sufficient condition (by \guillemotleft standardness\guillemotright) for the formula and its negation to be jointly asserted.
Not all Logics of Formal Inconsistency contain standard restoration connectives.%

Here is a summary of the above-mentioned principles:

\begin{center}
\begin{tabular}{llllllll}
 [ConInc] & $\Pi,\vsmile\varphi\cons\Sigma$ \iff $\Pi\cons\wsmile\varphi,\Sigma$ \hspace{1cm} & [LegCa] & $p,\wsmile p\comp$ & [LegCb] & $\smile p,\wsmile p\comp$ \\
 {[Con1]} & $\wsmile p,p,\smile p\cons$ \hspace{1cm} & [Con2a] & $\cons \wsmile p, p$ \hspace{1cm} & [Con2b] & $\cons \wsmile p, \smile p$ \\ 
\end{tabular}
\end{center}
Whenever they are present, the connectives $\wsmile$ and $\vsmile$ are dubbed \emph{restoration companions} of~$\smile$.

Dually, by interchanging assertions and denials, one obtains 
$\frown$-paracomplete logics, defining classes of so-called \emph{Logics of Formal Undeterminedness} \cite{Mar:paranormal}. The latter contain restoration connectives~$\wfrown$ and~$\vfrown$ that track down $\frown$-determinedness and $\frown$-indeterminedness, in such a way that the formula $\vfrown p$ is asserted exactly when $p$ and $\frown p$ are at once denied.
Accordingly, here are the relevant principles that characterize the \emph{standard} form of \emph{gentle implosion}:

\begin{center}
\begin{tabular}{llllllll}
 [UndDet] & $\Pi,\wfrown\varphi\cons\Sigma$ \iff $\Pi\cons\vfrown\varphi,\Sigma$ \hspace{1cm} & [LegDa] & $\comp p,\vfrown p$ & [LegDb] & $\comp\frown p,\vfrown p$ \\
 {[Und1]} & $\cons p,\frown p,\vfrown p$ \hspace{1cm} & [Und2a] & $p, \vfrown p\cons$ \hspace{1cm} & [Und2b] & $\frown p, \vfrown p\cons$ \\ 
\end{tabular}
\end{center}
Whenever present, the connectives $\vfrown$ and $\wfrown$ are referred to as the \emph{restoration companions} of~$\frown$.
The underlying recapturing intuitions are that (Neg$_i^{\smile}$) is to be recovered exactly when the \guillemotleft consistency assumption\guillemotright\ is explicitly asserted, and that (Neg$_u^{\frown}$) is to be recovered exactly when the \guillemotleft determinedness assumption\guillemotright\ is explicitly asserted.

\begin{figure}[htbp]
\centering
  \begin{minipage}{0.3\textwidth}
  \centering
{%
\begin{tikzpicture}\hspace{0mm}
  \matrix [matrix of math nodes,row sep=.7cm,column sep=.7cm,ampersand replacement=\&]
  {
      |(A)| \wsmile p, p\comp %
   \& |(E)| {\wfrown} p\comp p \\ %
      |(I)| \comp p,{\vfrown} p  %
   \& |(O)| p\comp \vsmile p \\ %
  };
  \begin{scope}[every node/.style={auto}]
     {\draw[-,Red,thick]   (A) to node[above] {{$\contra$}} (O);}
     {\draw[-,Red,thick]   (E) to node        {}            (I);}
     {\draw[-,Blue,thick]  (A) to node[left]  {{$\dual$}}   (I);}
     {\draw[-,Blue,thick]  (E) to node        {}            (O);}
     {\draw[-,Green,thick] (A) to node        {{$\ant$}}    (E);}
     {\draw[-,Green,thick] (I) to node        {}            (O);}
  \end{scope}
\end{tikzpicture}%
}%
\end{minipage}
\begin{minipage}{0.3\textwidth}
  \centering
{%
\begin{tikzpicture}\hspace{-2mm}
  \matrix [matrix of math nodes,row sep=.7cm,column sep=.7cm,ampersand replacement=\&]
  {
      |(A)| \wsmile p, \smile p\comp %
   \& |(E)| {\wfrown} p\comp {\frown} p \\ %
      |(I)| \comp {\frown} p,{\vfrown} p  %
   \& |(O)| \smile p\comp \vsmile p \\ %
  };
  \begin{scope}[every node/.style={auto}]
     {\draw[-,Red,thick]   (A) to node[above] {{$\contra$}} (O);}
     {\draw[-,Red,thick]   (E) to node        {}            (I);}
     {\draw[-,Blue,thick]  (A) to node[left]  {{$\dual$}}   (I);}
     {\draw[-,Blue,thick]  (E) to node        {}            (O);}
     {\draw[-,Green,thick] (A) to node        {{$\ant$}}    (E);}
     {\draw[-,Green,thick] (I) to node        {}            (O);}
  \end{scope}
\end{tikzpicture}%
}%
\end{minipage}\smallskip\\
  \begin{minipage}{0.37\textwidth}
  \centering
\begin{tikzpicture}\hspace{-2mm}
  \matrix [matrix of math nodes,row sep=.7cm,column sep=.7cm,ampersand replacement=\&]
  {
      |(A)| \wsmile p,p,\smile p\cons %
   \& |(E)| {\wfrown} p\cons p,{\frown} p \\ %
      |(I)| \cons p,{\frown} p,{\vfrown} p %
   \& |(O)| p,\smile p\cons\vsmile p \\ %
  };
  \begin{scope}[every node/.style={auto}]
     {\draw[-,Red,thick]   (A) to node[above] {{$\contra$}} (O);}
     {\draw[-,Red,thick]   (E) to node        {}            (I);}
     {\draw[-,Blue,thick]  (A) to node[left]  {{$\dual$}}   (I);}
     {\draw[-,Blue,thick]  (E) to node        {}            (O);}
     {\draw[-,Green,thick] (A) to node        {{$\ant$}}    (E);}
     {\draw[-,Green,thick] (I) to node        {}            (O);}
  \end{scope}
\end{tikzpicture}
  \end{minipage}
  \begin{minipage}{0.37\textwidth}
  \centering
\begin{tikzpicture}\hspace{-2mm}
  \matrix [matrix of math nodes,row sep=.7cm,column sep=.7cm,ampersand replacement=\&]
  {
      |(A)| 
        \begin{tabular}{l}
        $\cons\wsmile p,p$\\
        $\cons\wsmile p,\smile p$
        \end{tabular}%
   \& |(E)| 
       \begin{tabular}{l}
        $p\cons\wfrown p$\\
        $\smile p\cons\wfrown p$
        \end{tabular}%
    \\ %
      |(I)| 
       \begin{tabular}{l}
        $\vfrown p,p\cons$\\
        $\vfrown p,\frown p\cons$
        \end{tabular}%
   \& |(O)| 
       \begin{tabular}{l}
        $\vsmile p\cons p$\\
        $\vsmile p\cons \smile p$
        \end{tabular}%
    \\ %
  };
  \begin{scope}[every node/.style={auto}]
     {\draw[-,Red,thick]   (A) to node[above] {{$\contra$}} (O);}
     {\draw[-,Red,thick]   (E) to node        {}            (I);}
     {\draw[-,Blue,thick]  (A) to node[left]  {{$\dual$}}   (I);}
     {\draw[-,Blue,thick]  (E) to node        {}            (O);}
     {\draw[-,Green,thick] (A) to node        {{$\ant$}}    (E);}
     {\draw[-,Green,thick] (I) to node        {}            (O);}
  \end{scope}
\end{tikzpicture}
  \end{minipage}
    \caption{Squares of Oppositions containing the restoration companions of the subclassical negations. The top squares represent the \emph{legitimacy} conditions; at the bottom, the left square contains the \emph{basic} restorative conditions and the right square supplements them with the \emph{standard} restorative conditions.}
    \label{fig:restoration}
\end{figure}
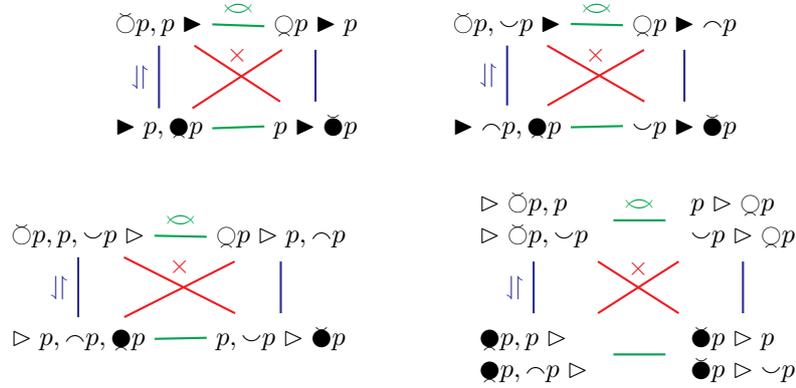

The Squares of Oppositions represented in Figures~\ref{fig:modalities} and~\ref{fig:restoration} fix and extend the ones found in~\cite{Mar:paranormal}.

%\clearpage
%%%%%%%%%%%%%%%%%%%%%%%%%%%%%%%%%%%%%%%%%%%%%%%%%%%%%%%%%%%%%%%%%%%%%%%%%%%%%%%%
\section{Kripke semantics for modal logics}

\noindent 
As we shall entertain a range of unary operators, we start by introducing notation for the languages thereby induced:

\begin{definition}\label{def:languages}
  By a \emph{similarity type} we shall refer to a collection of unary connectives. 
  Fix some arbitrary set~$K$ and let $\Lambda$ be a similarity type.
  We define $\lan_{\Lambda}$ to be the language generated by the grammar
  \begin{equation*}
    \phi ::= p_k
      \mid \top
      \mid \bot
      \mid \phi \wedge \phi
      \mid \phi \vee \phi
      \mid \Star\phi
  \end{equation*}
  where $k$ ranges over $K$ and $\Star \in \Lambda$.
\end{definition}

\subsection{Modal logic as you know it}\label{subsec:classical}

\noindent
  We briefly recall some basic de\-fi\-ni\-tions regarding
  relational semantics for modal languages.
  Formulas in $\lan_{\classneg,\Box,\Diamond}$ will be interpreted as follows:

\begin{definition}\label{def:box}\label{def:KripkeModel}
A \emph{Kripke model}~$\mo{M}$ is a structure $\struc{W, R, \{P_k\}_{k\in K}}$ consisting of a nonempty set
  $W$ of \guillemotleft worlds\guillemotright, a binary \guillemotleft accessibility relation\guillemotright\ $R$ on $W$, and a unary predicate~$P_k$ on~$W$ for each $k \in K$. 
  Each such~$P_k$ represents the \guillemotleft proposition expressed by the propositional letter~$p_k$ within~$\mo{M}$\guillemotright.
  We refer to~$P_k$ as the \emph{truth set of~$p_k$}, and denote it by $\TS{p_k}{M}$.
  Given $w \in W$, we let $R[w] := \{v \in W \mid w R v\}$ be the set of worlds accessible from~$w$.
  The truth set $\TS{\phi}{M}$ of a formula $\phi \in \lan_{\classneg,\Box,\Diamond}$ in~$\mo{M}$ is then recursively defined by:%
  \smallskip\\
  \begin{tabular}{rl}
  (TrS$\top$) & $\TS{\top}{M}:=W$ \\
  (TrS$\bot$) & $\TS{\bot}{M}:=\varnothing$ \\
  (TrS$\land$) & $\TS{\varphi_1\land\varphi_2}{M}:=\TS{\varphi_1}{M}\cap\TS{\varphi_2}{M}$ \\
  (TrS$\lor$) & $\TS{\varphi_1\lor\varphi_2}{M}:=\TS{\varphi_1}{M}\cup\TS{\varphi_2}{M}$ \\
  (TrS$\classneg$) & $\TS{\classneg\varphi_1}{M}:=W\setminus\TS{\varphi_1}{M}$ \\
  (TrS$\Box$) & $\TS{\Box\varphi_1}{M}:=\{w\in W \mid R[w]\subseteq\TS{\varphi_1}{M}\}$ \\
  (TrS$\Diamond$) & $\TS{\Diamond\varphi_1}{M}:=\{w\in W \mid R[w]\cap\TS{\varphi_1}{M}\neq\varnothing\}$ \\
  \end{tabular}%\smallskip\\
\end{definition}

Clearly, $\Box$ and $\Diamond$ are interdefinable with the help of $\classneg$.  This interdefinability ceases if~$\classneg$ is omitted from the language.

\begin{remark}\label{rem:def:box}
  As usual, when $w\in\TS{\varphi}{M}$ we say that $\varphi$ is \emph{satisfied at the world~$w$ of the model~$\mo{M}$}, and we denote this by $\mo{M}, w \Vdash \varphi$.
  It is a straightforward exercise to rewrite the conditions on truth sets as \emph{satisfaction conditions}.  For example:
  \begin{enumerate}
 \setlength{\itemindent}{2em}
    \myitem{Sat$\classneg$}\label{it:sat:classneg}
          $\mo{M}, w \Vdash \classneg\phi \iff \mo{M}, w \not\Vdash \phi$
    \myitem{Sat$\Box$}\label{it:sat:box} 
          $\mo{M}, w \Vdash \Box\phi \iff \text{for all } v \in W,
            \text{ if } w R v
            \text{ then } \mo{M}, v \Vdash \phi$
    \myitem{Sat$\Diamond$}\label{it:sat:diamond}
          $\mo{M}, w \Vdash \Diamond\phi \iff \text{there exists } v \in W
            \text{ such that } w R v
            \text{ and } \mo{M}, v \Vdash \phi$
  \end{enumerate}
\end{remark}

%\begin{remark}\label{rem:sec2-def}
  We note that the negation symbol~$\classneg$ is interpreted \guillemotleft classically\guillemotright\ (at each given world).  
In fact, using the terminology and de\-fi\-ni\-tions introduced in Section~\ref{sec:restoration}, and identifying assertion (resp.\ denial) with satisfaction (resp.\ non-satisfaction), one may now easily check that: 
(0) $\land$, $\lor$, $\top$, $\bot$ respect all the corresponding standard principles;
(1) $\Diamond$ is a diamond-plus connective; 
(2) $\Box$ is a box-plus connective; 
(3) $\sim$ is a classical-like negation.
%\end{remark}

\begin{remark}
  Kripke models are customarily presented in the literature as structures of the form $\struc{\struc{W,R},V}$, where the directed graph $\struc{W,R}$ is a \guillemotleft Kripke frame\guillemotright\ and the \guillemotleft valuation\guillemotright\ mapping $V:\{p_k\mid k\in K\}\to 2^W$ assigns to each propositional letter the set of all worlds in which it is true, that is, for each $k\in K$ we have that $V(p_k)=\{w\in W\mid \mo{M},w\Vdash p_k\}=\TS{p_k}{M}=P_k$.  However, for our present purposes we find it more perspicuous to present Kripke models simply as first-order structures interpreting a signature containing one binary relation symbol, $\mathbf{R}$, and one unary predicate symbol, $\mathbf{P}_k$, for each $k \in K$.
  For that matter, see also De\-fi\-ni\-tion~\ref{fol:sign}.
\end{remark}

  We will sometimes merge two Kripke models as follows:

\begin{definition}\label{def:disjunion}
  Let $\mo{M}_1 = \struc{W_1, R_1, \{P_{k,1}\}_{k\in K}}$ and $\mo{M}_2 = \struc{W_2, R_2, \{P_{k,2}\}_{k\in K}}$ be two
  Kripke models. Let:%
  \begin{align*}
    W &:= \{ (\ell, w) \mid \ell \in \{ 1, 2 \} \text{ and } w \in W_\ell \} \\
    R &:= \{ ((\ell, w), (\ell, v)) \mid \ell \in \{ 1, 2 \} \text{ and } (w, v) \in R_\ell \} \\
    P_k &:= \{ (\ell, w) \mid \ell \in \{ 1, 2 \} \text{ and } w \in P_{k,
    \ell} \}\text{, for each }k\in K 
  \end{align*}
  Then the \emph{disjoint union} of $\mo{M}_1$ and $\mo{M}_2$ is the Kripke
  model $\mo{M}_1 \uplus \mo{M}_2 := \struc{W, R,  \{P_{k}\}_{k\in K}}$.
\end{definition}

  The following result confirms that the two Kripke models given as input in the de\-fi\-ni\-tion above live inside their disjoint union.  It may be proven using a routine inductive argument, or using the fact that the inclusion maps are bounded morphisms:%

\begin{proposition}
  Let $\mo{M}_1 = \struc{W_1, R_1,\{P_{k,1}\}_{k\in K}}$ and $\mo{M}_2 = \struc{W_2, R_2,\{P_{k,2}\}_{k\in K}}$ be two
  Kripke models, and for each $\ell \in \{ 1, 2 \}$ let
  $\incl_\ell : \mo{M}_\ell \to \mo{M}_1 \uplus \mo{M}_2$
  be the inclusion map given by $w \mapsto (\ell, w)$.
  Then for all $w \in W_\ell$ and $\phi \in \lan_{\classneg,\Box,\Diamond}$,
  \begin{equation*}
    \mo{M}_\ell, w \Vdash \phi \iff (\mo{M}_1 \uplus \mo{M}_2), \incl_\ell(w) \Vdash \phi.
  \end{equation*}
\end{proposition}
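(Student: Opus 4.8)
The plan is to prove the biconditional by induction on the structure of $\phi \in \lan_{\classneg,\Box,\Diamond}$, fixing $\ell \in \{1,2\}$ throughout and abbreviating $\mo{M} := \mo{M}_1 \uplus \mo{M}_2 = \struc{W, R, \{P_k\}_{k\in K}}$. The one structural fact I would isolate first is a description of the worlds accessible in $\mo{M}$ from a world in the image of $\incl_\ell$. Directly from \cref{def:disjunion}, $(\ell,w)\mathrel{R}(\ell',v)$ holds iff $\ell'=\ell$ and $w\mathrel{R_\ell}v$, so $R[\incl_\ell(w)] = \{\incl_\ell(v) \mid v \in R_\ell[w]\}$, and since $\incl_\ell\colon w\mapsto(\ell,w)$ is injective it restricts to a bijection between $R_\ell[w]$ and $R[\incl_\ell(w)]$. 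Equivalently, one checks that each $\incl_\ell$ is a bounded morphism, the ``forth'' clause being preservation of $R_\ell$ and the ``back'' clause being precisely the inclusion $R[\incl_\ell(w)] \subseteq \incl_\ell(W_\ell)$ just noted, together with preservation and reflection of the $P_k$; the claim then follows from the standard fact that bounded morphisms preserve and reflect satisfaction. Since the paper is otherwise self-contained, I would also spell out the induction.

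For the base cases, $\top$ and $\bot$ are immediate from (TrS$\top$) and (TrS$\bot$), and for a propositional letter $p_k$ the equivalence $w \in P_{k,\ell} \iff (\ell,w) \in P_k$ is exactly the definition of $P_k$ in \cref{def:disjunion}. The cases $\phi = \phi_1 \wedge \phi_2$ and $\phi = \phi_1 \vee \phi_2$ follow from (TrS$\wedge$), (TrS$\vee$) and the induction hypothesis applied to $\phi_1$ and $\phi_2$; the case $\phi = \classneg\phi_1$ follows from \eqref{it:sat:classneg} and the induction hypothesis, since $(\ell,w) \in \TS{\classneg\phi_1}{M}$ iff $(\ell,w) \notin \TS{\phi_1}{M}$ iff $w \notin \TS{\phi_1}{M_\ell}$ iff $w \in \TS{\classneg\phi_1}{M_\ell}$.

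The modal cases are where the structural fact does the work. For $\phi = \Box\phi_1$: by \eqref{it:sat:box}, $\mo{M}, \incl_\ell(w) \Vdash \Box\phi_1$ iff $\mo{M}, u \Vdash \phi_1$ for every $u$ with $\incl_\ell(w)\mathrel{R}u$; by the description of $R[\incl_\ell(w)]$ these $u$ are exactly the $\incl_\ell(v)$ with $v \in R_\ell[w]$, and by the induction hypothesis $\mo{M}, \incl_\ell(v) \Vdash \phi_1$ iff $\mo{M}_\ell, v \Vdash \phi_1$; hence the condition is equivalent to ``$\mo{M}_\ell, v \Vdash \phi_1$ for all $v \in R_\ell[w]$'', i.e.\ to $\mo{M}_\ell, w \Vdash \Box\phi_1$. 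The case $\phi = \Diamond\phi_1$ is the same argument with the existential quantifier, via \eqref{it:sat:diamond}. I do not expect a genuine obstacle here: the only thing that could fail is accessibility in $\mo{M}$ linking the two components, and \cref{def:disjunion} is engineered precisely so that it does not; once the equality $R[\incl_\ell(w)] = \incl_\ell(R_\ell[w])$ is in hand, the modal clauses are mechanical, and this same pattern is exactly what will later need re-examination, component by component, for the negative and restoration modalities.
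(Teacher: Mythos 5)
Your proof is correct and follows exactly the route the paper indicates: the paper itself only remarks that the proposition ``may be proven using a routine inductive argument, or using the fact that the inclusion maps are bounded morphisms,'' and you carry out both, with the key structural observation $R[\incl_\ell(w)] = \incl_\ell(R_\ell[w])$ doing the work in the modal cases. Nothing is missing; you have simply supplied the details the paper leaves to the reader.
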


%%%%%%%%%%%%%%%%%%%%%%%%%%%%%%%%%%%%%%%%%%%%%%%%%%%%%%%%%%%%%%%%%%%%%%%%%%%%%%%%
\subsection{Interpreting negative modalities and their restoration companions}\label{subsec:Kripke-restoration}

\noindent
In this paper, by \guillemotleft positive logic\guillemotright\ we mean the logic of top, bottom, conjunction and disjunction, all interpreted as in classical logic.
We introduce below the unary connectives~$\smile$ and~$\frown$ in order to represent \guillemotleft subclassical negations\guillemotright, and the unary connectives~$\wsmile$, $\wfrown$, $\vsmile$, and $\vfrown$ to represent \guillemotleft restoration connectives\guillemotright.

\begin{definition}\label{def:simil}
  A \emph{restorative similarity type} is a subset of 
  $\{ \smile, \frown, \wsmile, \wfrown, \vsmile, \vfrown \}$.
\end{definition}

  Restorative modal logics will be interpreted in Kripke models as follows:

\begin{definition}\label{sat:simil}
  Let $\mo{M} = \struc{W, R, \{P_{k}\}_{k\in K}}$ be a Kripke model
  and let~$\Lambda$ be a restorative similarity type.
  Then the in\-ter\-pre\-tation of a formula $\phi \in \lan_{\Lambda}$
  at a world $w \in W$ is recursively defined via the relevant
  clauses from De\-fi\-ni\-tion~\ref{def:box} plus the following clauses in the style of Remark~\ref{rem:def:box}: \smallskip

\begin{enumerate}
 \setlength{\itemindent}{2em}
    \myitem{Sat$\smile$}\label{it:sat:smile}   
          $\mo{M}, w \Vdash \smile\phi \iff \text{there exists } v \in W
            \text{ such that } wRv
            \text{ and } \mo{M}, v \not\Vdash \phi$
    \myitem{Sat$\frown$}\label{it:sat:frown}  
          $\mo{M}, w \Vdash \frown\phi \iff \text{for all } v \in W
            \text{ such that } wRv
            \text{ we have } \mo{M}, v \not\Vdash \phi$
    \myitem{Sat$\wsmile$}\label{it:sat:wsmile}  
          $\mo{M}, w \Vdash \wsmile\phi \iff \mo{M}, w \not\Vdash \phi
            \text{ or }(\text{for all } v \in W
            \text{ such that } wRv
            \text{ we have } \mo{M}, v \Vdash \phi)$
    \myitem{Sat$\wfrown$}\label{it:sat:wfrown}  
          $\mo{M}, w \Vdash \wfrown\phi \iff \mo{M}, w \Vdash \phi
            \text{ or }(\text{for all } v \in W
            \text{ such that } wRv
            \text{ we have } \mo{M}, v \not\Vdash \phi)$
    \myitem{Sat$\vsmile$}\label{it:sat:vsmile}  
          $\mo{M}, w \Vdash \vsmile\phi \iff \mo{M}, w \Vdash \phi
            \text{ and }(\text{there exists } v \in W
            \text{ such that } wRv
            \text{ and } \mo{M}, v \not\Vdash \phi)$ 
    \myitem{Sat$\vfrown$}\label{it:sat:vfrown}  
          $\mo{M}, w \Vdash \vfrown\phi \iff \mo{M}, w \not\Vdash \phi
            \text{ and }(\text{there exists } v \in W
            \text{ such that } wRv
            \text{ and } \mo{M}, v \Vdash \phi)$
\end{enumerate}

  Let $w_1$ and $w_2$ be two worlds in two Kripke models
  $\mo{M}_1$ and $\mo{M}_2$, respectively.
  Then we write $\mo{M}_1, w_1 \subs_\Lambda \mo{M}_2, w_2$
  if
  $\mo{M}_1, w_1 \Vdash \phi$ implies $\mo{M}_2, w_2 \Vdash \phi$, for every $\phi \in \lan_{\Lambda}$.
  If the Kripke models are clear from the context, we simply write
  $w_1 \subs_{\Lambda} w_2$.
  We will refer to~$\subs_{\Lambda}$ as the \emph{subsumption relation}.
\end{definition}

  Rewriting the above satisfaction conditions as conditions on truth sets would be straightforward.
  Crucially, all the above modalities may be viewed as abbreviations in $\lan_{\classneg,\Box,\Diamond}$.
  For example, $\smile\phi$ corresponds to $\Diamond\classneg\phi$
  (a.k.a.\ \guillemotleft negation as nonnecessity\guillemotright)
  and $\frown\phi$ corresponds to $\Box\classneg\phi$ (a.k.a.\ \guillemotleft negation as impossibility\guillemotright).
  The restoration modalities then arise from conjoining or disjoining
  these with either $\phi$ or $\classneg\phi$.
  Besides, some restoration modalities can be defined using $\smile$ or $\frown$.
  For example, $\vsmile\phi$ clearly corresponds to $\phi \wedge \smile\phi$.
  However, in absence of a classical negation,
  most of these interdefinability features vanish.

Using the terminology and de\-fi\-ni\-tions introduced in Section~\ref{sec:restoration}, one may now easily check that: (1) $\smile$ is a diamond-minus (paraconsistent) negation; (2) $\frown$ is a box-minus (paracomplete) negation; 
(3) $\wsmile$, $\vsmile$, $\wfrown$ and~$\vfrown$ are all standard restoration connectives.
In addition, the semantic clauses in De\-fi\-ni\-tion~\ref{sat:simil} show that all negations and restoration connectives studied here have an intensional character.
  
  We discuss next some logics in the literature that arise from choosing some specific restorative similarity types, interpreted according to the clauses in De\-fi\-ni\-tion~\ref{sat:simil}.

\begin{example}\label{ex:impossibility}
  If we take $\Lambda = \{ \frown \}$, then we obtain positive logic with an \emph{impossibility} operator with the flavor of an
  \emph{intuitionistic-like negation}.
  In the presence of an intuitionistic implication, $\to$, this modality is axiomatizable by
  \begin{equation*}
    \dfrac{\vdash\phi \to \psi}{\vdash\frown\psi \to \frown\phi},
    \qquad
    \vdash(\frown\phi \wedge \frown\psi) \to \frown(\phi \vee \psi)
    \qquad\text{and}\qquad
    \vdash\frown\bot.
  \end{equation*}
  Note that, except for the use of implication, this axiomatization consists basically in guaranteeing that~$\frown$ is a box-minus negative modality (recall [rev], [DM1.2] and [DM1.0], from Section~2).
  This modality was used in~\cite{Dos84a,Dos86} to produce an expansion of intuitionistic propositional logic,
  and was also studied as being added to positive logic without implication~\cite{Cel99,DunZho05,Cel13,LinMa20}.
\end{example}

\begin{example}\label{ex:accident}
  Taking $\Lambda = \{ \wsmile, \vsmile \}\cup\{\classneg\}$, a non-normal modal extension of classical propositional logic is obtained (see \cite{Mar04:accident,steinsvold:2008,kushida:2010,GilVer20}). 
  In this logic, $\vsmile$ is intended to represent an \guillemotleft accidentality operator\guillemotright, in the following assertoric sense: $\vsmile p$ is true at a given world if $p$ is true there, but possibly false (that is, false at an accessible world).
\end{example}

\begin{example}\label{ex:LFIs}
Fixing $\Lambda = \{ \smile, \wsmile, \vsmile \}$, modal Logics of Formal Inconsistency were first introduced in \cite[Chapter~3]{marcos:2005:phd}, and have also been presented using a full restorative language \cite{Mar:paranormal}.
The consistency connective has often been represented as a small white circle, $\circ$, but also in many occasions as a smiling circle, $\oldwsmile$. The $\frown$-determinedness connective has been variously represented as a star, or ---somewhat confusingly--- as a black frowning circle, rather than the symbol $\wfrown$ that we chose to use here.  Given that Kripke semantics is involved, it makes sense to consider the effect of selecting specific classes of Kripke frames.  
The logics corresponding to these classes have been examined in various settings \cite{DodMar14,LahMarZoh16,lahav_ori_sequent_2017}, and various other studies explore related systems while not explicitly isolating the restoration connectives that are definable within them \cite{Beziau:Z:2006,Nasiniewsk2:2008,Avron:Zamansky:self:2020}.  
It turns out, in particular, that the class of reflexive frames is axiomatized by a form of excluded middle (namely, $\vdash\varphi\lor\smile\varphi$), and with the help of a standard implication the class of symmetric frames is axiomatized by a form of double-negation elimination (namely, $\vdash\smile\smile\varphi\to\varphi$).
Analytic sequent systems for (implication-free fragments of) such systems have been studied in \cite{LahMarZoh16,lahav_ori_sequent_2017}, and other well-behaved proof systems for some of these systems have also been investigated \cite{Nalon:Marcos:Dixon:2014,Samadpour-et-al:confluence:2025}.
\end{example}

\begin{example}\label{ex:rich-neg}
  Logics defined over the language obtained by taking $\Lambda = \{ \smile, \frown, \wsmile, \vfrown \}$ were
  studied in \cite{Mar:paranormal,DodMar14,LahMarZoh16,lahav_ori_sequent_2017}.
  In this setting, $\top$ and $\bot$ can be viewed as derived operators.
  In certain situations, $\wsmile$ happens to be at once dual and contradictory (recall Figure~\ref{fig:restoration}) to $\vfrown$.  This means, in particular, that the connectives $\wsmile$ and $\wfrown$ collapse into one another, and one may talk thus about them as a \guillemotleft classicality\guillemotright\ connective, $\classconn$, for such a connective allows for both consistent and determined behavior of negation to be recovered at once.  \guillemotleft Perfection operators\guillemotright\ consisting in algebraic counterparts of such a connective were studied in \cite{Esteva:Figallo:Flaminio:Godo:2021,Gomes:Greati:Marcelino:Marcos:Rivieccio:2022}.
\end{example}

%%%%%%%%%%%%%%%%%%%%%%%%%%%%%%%%%%%%%%%%%%%%%%%%%%%%%%%%%%%%%%%%%%%%%%%%%%%%%%%%
\subsection{On the (un)definability of classical negation}\label{subsec:negation}

\noindent
In this section we will focus on restorative modal logics whose semantics satisfy all clauses mentioned in De\-fi\-ni\-tion~\ref{sat:simil}.  As usual within the context of Kripke semantics, we will dub \emph{classical negation} a connective interpreted according to condition (TrS$\classneg$) from De\-fi\-ni\-tion~\ref{def:box}.  Several results concerning the undefinability of a classical negation were established in \cite{lahav_ori_sequent_2017}. Some of these results are recalled in what follows.%

\begin{proposition}
    A classical negation is not definable in the minimal restorative normal modal logic $PK_\Lambda$, determined by the class of all Kripke frames and based on $\Lambda=\{\smile,\frown,\wsmile,\vfrown\}$.
\end{proposition}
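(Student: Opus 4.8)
The plan is to refute definability semantically. Since $PK_\Lambda$ is complete with respect to the class of all Kripke frames, a classical negation --- i.e.\ a connective interpreted by clause (TrS$\classneg$) of Definition~\ref{def:box} --- is definable in $PK_\Lambda$ only if there is some $\chi\in\lan_{\Lambda}$ built over a single propositional letter~$p$ with $\TS{\chi}{M}=W\setminus\TS{p}{M}$ in \emph{every} Kripke model $\mo{M}=\struc{W,R,\{P_k\}_{k\in K}}$. I will exhibit one model and two of its worlds $w_1,w_2$ with $\mo{M},w_1\not\Vdash p$, $\mo{M},w_2\Vdash p$, and yet $w_1\subs_{\Lambda}w_2$ (every $\lan_{\Lambda}$-formula true at $w_1$ is true at $w_2$); this is incompatible with the displayed equation, since it forces $\chi$ to be true at $w_1$ (as $p$ is false there), hence true at $w_2$ (by subsumption), hence makes $p$ false at $w_2$.

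Concretely, take $\mo{M}=\struc{\{w_1,w_2\},\varnothing,\{P_k\}_{k\in K}}$ with $R=\varnothing$, with $w_2\in P_{k_0}$ where $p=p_{k_0}$, and with $w_1\notin P_k$ for every $k\in K$. The crucial observation is that both worlds are $R$-dead-ends, so that in the satisfaction clauses of Definition~\ref{sat:simil} the universally quantified clauses for~$\frown$ and~$\wsmile$ hold vacuously at $w_1$ and $w_2$ --- $\mo{M},w_i\Vdash\frown\psi$ and $\mo{M},w_i\Vdash\wsmile\psi$ for all $\psi\in\lan_{\Lambda}$ and $i\in\{1,2\}$ --- whereas the existentially quantified clauses for~$\smile$ and~$\vfrown$ fail vacuously, so $\mo{M},w_i\not\Vdash\smile\psi$ and $\mo{M},w_i\not\Vdash\vfrown\psi$. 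I then prove $w_1\subs_{\Lambda}w_2$ by induction on $\phi\in\lan_{\Lambda}$: if $\mo{M},w_1\Vdash\phi$ then $\phi$ is not $\bot$, not a propositional letter (none holds at $w_1$), and not of the form $\smile\psi$ or $\vfrown\psi$; if $\phi$ is $\top$, $\frown\psi$ or $\wsmile\psi$ then $\phi$ is likewise true at $w_2$ by the observation above; and the cases $\phi=\phi_1\wedge\phi_2$ and $\phi=\phi_1\vee\phi_2$ follow at once from the induction hypothesis.

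With $w_1\subs_{\Lambda}w_2$ in hand, the contradiction is as anticipated: a defining formula $\chi$ for a classical negation, built over the single letter~$p$, would satisfy $\mo{M},w_1\Vdash\chi$ since $\mo{M},w_1\not\Vdash p$, hence $\mo{M},w_2\Vdash\chi$ by subsumption, hence $\mo{M},w_2\not\Vdash p$ by the defining property of~$\chi$, contradicting $w_2\in P_{k_0}$. Therefore no such $\chi$ exists.

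This argument has no genuinely hard step; the only places that need care are (i) reading off the vacuous instances of the satisfaction clauses for the negative and restoration modalities correctly at an $R$-dead-end, and (ii) pinning down what ``a classical negation is definable in $PK_\Lambda$'' means, which --- thanks to completeness with respect to all Kripke frames --- is exactly the semantic equation $\TS{\chi}{M}=W\setminus\TS{p}{M}$. One could equivalently recast the pair $(w_1,w_2)$ as an instance of the simulations introduced in Section~\ref{sec:sim}, but the self-contained inductive version above is shorter and requires nothing beyond Definitions~\ref{def:box} and~\ref{sat:simil}.
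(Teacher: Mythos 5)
Your proof is correct. Note that the paper does not prove this proposition inline: it defers to an external, ``quite involved'' argument (an induction over all unary formulas that could serve as candidates for a classical negation) and then points to Example~\ref{exm:neg} as the ``much simpler proof'' obtainable with the simulation machinery. Your argument is exactly that simpler semantic route, made self-contained: the singleton relation $\{(w_1,w_2)\}$ on your two-point model with $R=\varnothing$ vacuously satisfies every clause of Definition~\ref{def:sim}, so your hand-rolled induction is just Lemma~\ref{lem:adeq} specialized to this simulation, as you yourself observe. Two substantive points in your favour: (i) your dead-end model is simpler than the four-world model of Example~\ref{exm:neg}; and (ii) your argument genuinely covers the full similarity type $\Lambda=\{\smile,\frown,\wsmile,\vfrown\}$ of the proposition, whereas the relation exhibited in Example~\ref{exm:neg} is only a $\{\smile,\wsmile\}$-simulation (indeed a $\{\smile,\frown,\wsmile\}$-simulation, but it violates \eqref{it:sim-vfrown} at the pair $(w,v)$ with $wRu$), and undefinability in a sublanguage does not by itself transfer to a larger language. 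One cosmetic remark: the appeal to completeness is unnecessary --- since $PK_\Lambda$ is \emph{defined} as the logic determined by the class of all Kripke frames, definability of a classical negation is directly the semantic condition $\TS{\chi}{M}=W\setminus\TS{p}{M}$ that you refute.
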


The proof of the above result, given in \cite[Theo\-rem~6.1]{lahav_ori_sequent_2017}, is quite involved, using a sophisticated semantical argument and an induction over all unary formulas that could be considered as candidates for playing the role of a classical negation.  In contrast, a much simpler proof of undefinability of classical negation, using the tools we introduce in the present paper, may be found in Example~\ref{exm:neg} below.

\begin{remark}
As mentioned in Example~\ref{ex:LFIs}, deductively stronger restorative modal logics have also been studied, by restricting attention to specific classes of frames.  In contrast to the above result, one may show that a classical negation is indeed available at some of these logics.  Over the logics $PKT_{\Lambda}$ determined by the class of reflexive frames, for instance, the classical negation of~$p$ may be explicitly defined by considering $\smile p\land\wsmile p$ or by considering $\frown p\lor\vfrown p$, according to which restorative similarity type~$\Lambda$ one happens to entertain.  Moreover, over the weaker logics $PKD_{\Lambda}$ determined by the class of serial frames one may consider, for instance, $(\smile p\lor\vfrown p)\land\wsmile p$ or $(\frown p\land\wsmile p)\lor\vfrown p$.
\end{remark}

Undefinability results are also known for stronger restorative modal logics (again, see \cite[Theo\-rem~6.1]{lahav_ori_sequent_2017}): 

\begin{proposition}
    A classical negation is also not definable in the restorative modal logics $PKB_\Lambda$ and $PK4_\Lambda$, respectively determined by the class of symmetric frames and by the class of transitive frames, based on $\Lambda=\{\smile,\frown,\wsmile,\vfrown\}$.
\end{proposition}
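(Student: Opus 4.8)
The plan is to give a short semantic argument --- markedly simpler than the one of~\cite{lahav_ori_sequent_2017} --- resting on two one-point models that happen to lie in \emph{every} class of frames under consideration. Suppose, toward a contradiction, that some formula $\nu(p)\in\lan_{\Lambda}$ defines a classical negation of the propositional letter~$p$ over the class of symmetric frames. I will exhibit Kripke models $\mo{M}_1$ and $\mo{M}_2$ with symmetric frames, a world~$u$ of $\mo{M}_1$ and a world~$w$ of $\mo{M}_2$, such that $\mo{M}_1, u \subs_{\Lambda} \mo{M}_2, w$ while $\mo{M}_1, u \not\Vdash p$ and $\mo{M}_2, w \Vdash p$. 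Granting this, $\mo{M}_1, u \Vdash \nu(p)$ because $p$ fails at~$u$; hence $\mo{M}_2, w \Vdash \nu(p)$ since $\mo{M}_1, u \subs_{\Lambda} \mo{M}_2, w$ and $\nu(p)\in\lan_{\Lambda}$; hence $\mo{M}_2, w \not\Vdash p$ because $\nu(p)$ is the classical negation of~$p$ on the symmetric model $\mo{M}_2$ --- contradicting $\mo{M}_2, w \Vdash p$. Reading ``transitive'' for ``symmetric'' throughout yields the statement for $PK4_{\Lambda}$.

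The two models are as degenerate as possible. Put $\mo{M}_1 := \struc{\{u\},\, \varnothing,\, \{P_k\}_{k\in K}}$ with $P_k := \varnothing$ for every~$k$, and $\mo{M}_2 := \struc{\{w\},\, \varnothing,\, \{P'_k\}_{k\in K}}$ with $P'_k := \{w\}$ when $p_k = p$ and $P'_k := \varnothing$ otherwise. Both are genuine Kripke models in the sense of Definition~\ref{def:KripkeModel} --- nothing there forces $R$ to be serial --- and the empty accessibility relation is vacuously symmetric and transitive, so $\mo{M}_1$ and $\mo{M}_2$ belong to the classes of frames determining both $PKB_{\Lambda}$ and $PK4_{\Lambda}$. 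By construction $\mo{M}_1, u \not\Vdash p$ and $\mo{M}_2, w \Vdash p$.

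It remains to establish the claim $\mo{M}_1, u \subs_{\Lambda} \mo{M}_2, w$, which follows by a routine induction on $\phi\in\lan_{\Lambda}$. The crucial observation is that a world without successors trivializes every modality in $\Lambda=\{\smile,\frown,\wsmile,\vfrown\}$: by clauses~\ref{it:sat:smile}, \ref{it:sat:frown}, \ref{it:sat:wsmile} and~\ref{it:sat:vfrown}, at such a world $\smile\psi$ and $\vfrown\psi$ are false and $\frown\psi$ and $\wsmile\psi$ are true, regardless of~$\psi$. Consequently, at~$u$ and at~$w$ alike one may replace each outermost modal subformula of~$\phi$ by $\top$ or $\bot$ as appropriate; since this replacement depends only on the modality symbol and not on the model, it yields one and the same purely propositional formula (built from atoms, $\top$, $\bot$, $\land$, $\lor$), which has the same truth value as~$\phi$ at the world in question. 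A purely propositional formula is monotone in the valuation, and the valuation at~$w$ dominates pointwise the empty valuation at~$u$; hence whatever is true at~$u$ is true at~$w$, which is exactly $\mo{M}_1, u \subs_{\Lambda} \mo{M}_2, w$.

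I expect no serious obstacle: the single thing one needs to notice is that the three frame classes at issue all contain the one-point frame with empty accessibility, on which every connective of~$\Lambda$ is constant, so that no formula of $\lan_{\Lambda}$ can separate ``$p$ holds here'' from ``$p$ fails here'' at such a world. (By contrast, the proof in~\cite{lahav_ori_sequent_2017} inducts over all unary candidate formulas and is much longer; and note that the argument genuinely uses dead ends, in line with the fact --- recalled above --- that over serial frames a classical negation \emph{is} definable.) The same two models, being frames of $PK_{\Lambda}$ as well, also give a quick proof of the preceding proposition; one may moreover repackage the whole thing as the statement that a classical negation is not invariant under $\Lambda$-simulations, which is in spirit how Example~\ref{exm:neg} proceeds once the relevant apparatus is in place.
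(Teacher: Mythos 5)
Your argument is correct, and it takes a genuinely different route from the paper, which does not prove this proposition at all but merely recalls it from \cite[Theo\-rem~6.1]{lahav_ori_sequent_2017}, whose proof is described there as an involved induction over all candidate unary formulas. Your two one-point, successor-free models lie vacuously in both the symmetric and the transitive frame classes, and the key observation is sound: at a dead-end world each connective of $\Lambda=\{\smile,\frown,\wsmile,\vfrown\}$ is constant ($\frown\psi$ and $\wsmile\psi$ true, $\smile\psi$ and $\vfrown\psi$ false, independently of $\psi$), so $\mo{M}_1,u\subs_{\Lambda}\mo{M}_2,w$ follows and no $\lan_{\Lambda}$-formula can play the role of $\classneg p$ at such worlds. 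Two remarks. First, the middle step can be streamlined and brought in line with the paper's own machinery: the singleton relation $\{(u,w)\}$ on $\mo{M}_1\uplus\mo{M}_2$ is vacuously a $\Lambda$-simulation (every simulation condition for these four connectives has an $R$-step in its premise, and both worlds are dead ends), so Theo\-rem~\ref{thm:adeq} gives $u\subs_{\Lambda}w$ directly, with no need for the ``replace outermost modal subformulas'' detour or the appeal to monotonicity of positive formulas --- a plain induction also works, since the $\smile$ and $\vfrown$ cases are vacuous at $u$ and the $\frown$ and $\wsmile$ cases are trivial at $w$. This is exactly the style of Example~\ref{exm:neg}, whose four-point model, note, could not be reused here because its accessibility relation is not symmetric. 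Second, your construction correctly exploits dead ends and therefore cannot extend to serial or reflexive frames, consistently with the paper's remark that classical negation \emph{is} definable over $PKD_{\Lambda}$ and $PKT_{\Lambda}$; within its scope it buys a uniform two-line treatment of $PK_{\Lambda}$, $PKB_{\Lambda}$ and $PK4_{\Lambda}$ at once, which is a real simplification over the cited argument.
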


Note that as soon as positive logic is upgraded through the presence a classical negation~$\classneg$, then all classical connectives turn out to be definable as usual, and if negative modalities are available then the positive modalities are also definable: indeed, $\Box \varphi$ corresponds to $\classneg\smile\varphi$ (or to $\frown\classneg\varphi$), and $\Diamond \varphi$ corresponds to $\smile\classneg\varphi$ (or to $\classneg\frown\varphi$).  As the negative modalities from $\lan_{\classneg,\smile,\frown}$ are obviously definable in $\lan_{\classneg,\Box,\Diamond}$, then these are essentially the same logics.
This explains our focus on logics with negative (and restorative) modalities in which classical negation is \textit{not} definable.  Analogously, in the presence of a connective~$\to$ that behaves as classical implication, it is easy to define a classical negation if, say, a diamond-minus connective is also available.  Accordingly, we are primarily interested in logics \textit{without} a classical implication.

\begin{remark}
In Example~\ref{ex:impossibility} a language containing a box-minus connective and an intuitionistic implication is mentioned.  The problem of adding an intuitionistic-like implication to a restorative modal logic containing a classicality connective (see Example~\ref{ex:rich-neg}) is a rather non-trivial one.  On this topic, the interested reader is invited to consult \cite{Greati:Marcelino:Marcos:Rivieccio:2024}.
\end{remark}

%\clearpage
%%%%%%%%%%%%%%%%%%%%%%%%%%%%%%%%%%%%%%%%%%%%%%%%%%%%%%%%%%%%%%%%%%%%%%%%%%%%%%%%
\section{Simulations}\label{sec:sim}

\noindent
  With the aim of obtaining characterization results for restorative modal logics, we introduce the notion of simulation.
  The de\-fi\-ni\-tion is given parametrically in a restorative similarity type~$\Lambda$, and specifies a relation ranging over a single Kripke model. Simulations between Kripke models will be defined as simulations on the disjoint union of the models.

\begin{definition}\label{def:sim}
  Let $\mo{M} = \struc{W, R, \{P_{k}\}_{k\in K}}$ be a Kripke model.
  A relation $\Sim \subseteq W \times W$ is a
  \emph{$\Lambda$-simulation} if it satisfies~\eqref{it:sim-prop} for every $k \in K$,
  and (Sim${\Star}$) for each $\Star \in \Lambda$, where:
  \begin{enumerate}\itemsep=3pt
  \setlength{\itemindent}{2em}
    \myitem{Sim$_k$} \label{it:sim-prop}
            If $\Simin{w}{v}$ and $w \in P_k$ then $v \in P_k$
    \myitem{Sim${\smile}$} \label{it:sim-smile}
            If $\Simin{w}{v}$ and $w R s$ then there exists a $t \in W$
            such that $v R t$ and $\Simin{t}{s}$
    \myitem{Sim${\frown}$} \label{it:sim-frown}
            If $\Simin{w}{v}$ and $v R t$ then there exists an $s \in W$
            such that $w R s$ and $\Simin{t}{s}$
    \myitem{Sim${\wsmile}$} \label{it:sim-wsmile}
            If $\Simin{w}{v}$ and $v R t$ then \\
            \phantom{\qquad}
            \begin{tabular}{|l}
              either $\Simin{v}{t}$,  \\
              or both $\Simin{v}{w}$ and there exists some $s \in W$ such that $w R s$ and $ (s,t)\in\Sim$
              \end{tabular}
    \myitem{Sim${\wfrown}$} \label{it:sim-wfrown}
            If $\Simin{w}{v}$ and $v R t$ then \\
            \phantom{\qquad}
            \begin{tabular}{|l}
                either $\Simin{t}{v}$, \\ 
              or
              there exists some $s \in W$ such that $w R s$ and $\Simin{t}{s}$
            \end{tabular}
    \myitem{Sim${\vsmile}$} \label{it:sim-vsmile}
            If $\Simin{w}{v}$ and $w R s$ then \\
            \phantom{\qquad}
            \begin{tabular}{|l}
              either $\Simin{w}{s}$, \\
              or
              there exists some $t \in W$ such that $v R t$ and $ \Simin{t}{s}$
            \end{tabular}
    \myitem{Sim${\vfrown}$} \label{it:sim-vfrown}
            If $\Simin{w}{v}$ and $w R s$ then \\
            \phantom{\qquad}
            \begin{tabular}{|l}
              either $\Simin{s}{w}$, \\
              or both $\Simin{v}{w}$ and there exists some $t \in W$ such that $v R t$ and $ \Simin{s}{t}$
            \end{tabular}
  \end{enumerate}
  We write $w \simul_{\Lambda} v$ if there exists a $\Lambda$-simulation
  $\Sim$ on $\mo{M}$ such that $\Simin{w}{v}$, and we call $\simul_{\Lambda}$ the relation of \emph{$\Lambda$-similarity}.
\end{definition}

\begin{figure}[h!]
  \centering
    \begin{tikzpicture}[scale=.9, arrows=-latex]
      %% nodes figure 1
        \node (w) at (0,0) {$w$};
        \node (v) at (2,0) {$v$};
        \node (s) at (0,1.5) {$s$};
        \node (t) at (2,1.5) {$t$};
      %% edges figure 1
        \draw[-Circle] (w) to node[below]{\fns{$\Sim$}} (v); 
        \draw (w) to node[left]{\fns{$R$}} (s);
        \draw[densely dotted] (v) to node[right]{\fns{$R$}} (t);
        \draw[densely dotted, -Circle] (t) to node[above]{\fns{$\Sim$}} (s);
      %% label 1
        \node at (1,-.85) {\eqref{it:sim-smile}};
      %% nodes figure 2
        \node (w) at (0,-3.5) {$w$};
        \node (v) at (2,-3.5) {$v$};
        \node (s) at (0,-2) {$s$};
        \node (t) at (2,-2) {$t$};
      %% edges figure 2
        \draw[-Circle] (w) to node[below]{\fns{$\Sim$}} (v); 
        \draw[densely dotted] (w) to node[left]{\fns{$R$}} (s);
        \draw (v) to node[right]{\fns{$R$}} (t);
        \draw[densely dotted, -Circle] (t) to node[above]{\fns{$\Sim$}} (s);
      %% label 2
        \node at (1,-4.35) {\eqref{it:sim-frown}};
      %% nodes figure 3
        \node (w) at (4,0) {$w$};
        \node (v) at (6,0) {$v$};
        \node (s) at (4,1.5) {$s$};
        \node (t) at (6,1.5) {$t$};
      %% edges figure 3
        \draw[-Circle] (w) to node[below]{\fns{$\Sim$}} (v); 
        \draw (v) to node[right]{\fns{$R$}} (t);
        \draw[dashed, bend left=35,-Circle] (v) to node[left,pos=.7]{\fns{$\Sim$}} (t);
        \draw[densely dotted, bend right=30,-Circle] (v) to node[above,pos=.6]{\fns{$\Sim$}} (w);
        \draw[densely dotted] (w) to node[left]{\fns{$R$}} (s);
        \draw[densely dotted, -Circle] (s) to node[above]{\fns{$\Sim$}} (t);
      %% label 3
        \node at (5,-.85) {\eqref{it:sim-wsmile}};
      %% nodes figure 4
        \node (w) at (4,-3.5) {$w$};
        \node (v) at (6,-3.5) {$v$};
        \node (s) at (4,-2) {$s$};
        \node (t) at (6,-2) {$t$};
      %% edges figure 4
        \draw[-Circle] (w) to node[below]{\fns{$S$}} (v); 
        \draw (v) to node[right]{\fns{$R$}} (t);
        \draw[densely dotted] (w) to node[left]{\fns{$R$}} (s);
        \draw[dashed, bend right=35, -Circle] (t) to node[left]{\fns{$\Sim$}} (v);
        \draw[densely dotted, -Circle] (t) to node[above]{\fns{$\Sim$}} (s);
      %% label 4
        \node at (5,-4.35) {\eqref{it:sim-wfrown}};
      %% nodes figure 5
        \node (w) at (8,0) {$w$};
        \node (v) at (10,0) {$v$};
        \node (s) at (8,1.5) {$s$};
        \node (t) at (10,1.5) {$t$};
      %% edges figure 5
        \draw[-Circle] (w) to node[below]{\fns{$\Sim$}} (v);
        \draw (w) to node[left]{\fns{$R$}} (s);
        \draw[dashed, bend right=35, -Circle] (w) to node[right]{\fns{$\Sim$}} (s);
        \draw[densely dotted] (v) to node[right]{\fns{$R$}} (t);
        \draw[densely dotted, -Circle] (t) to node[above]{\fns{$\Sim$}} (s);
      %% label 3
        \node at (9,-.85) {\eqref{it:sim-vsmile}};
      %% nodes figure 6
        \node (w) at (8,-3.5) {$w$};
        \node (v) at (10,-3.5) {$v$};
        \node (s) at (8,-2) {$s$};
        \node (t) at (10,-2) {$t$};
      %% edges figure 6
        \draw[-Circle] (w) to node[below]{\fns{$S$}} (v);
        \draw (w) to node[left]{\fns{$R$}} (s);
        \draw[dashed, bend left=35, -Circle] (s) to node[right,pos=.3]{\fns{$\Sim$}} (w);
        \draw[densely dotted, bend right=30, -Circle] (v) to node[above,pos=.4]{\fns{$\Sim$}} (w);
        \draw[densely dotted] (v) to node[right]{\fns{$R$}} (t);
        \draw[densely dotted, -Circle] (s) to node[above]{\fns{$\Sim$}} (t);
      %% label 4
        \node at (9,-4.35) {\eqref{it:sim-vfrown}};
    \end{tikzpicture}
    \caption{The simulation conditions can be depicted in diagrammatic form. The modal accessibility relation is
    depicted with an arrow, while the simulation relation is drawn as a bullet-headed arrow.
             Solid arrows indicate universal quantification,
             and the dashed and dotted arrows existential ones.
             The dashed and dotted lines should be read as a disjunction:
             either the connections indicated by the dotted lines hold,
             or the connection indicated by the dashed line holds.}
    \label{fig:sim}
  \end{figure}
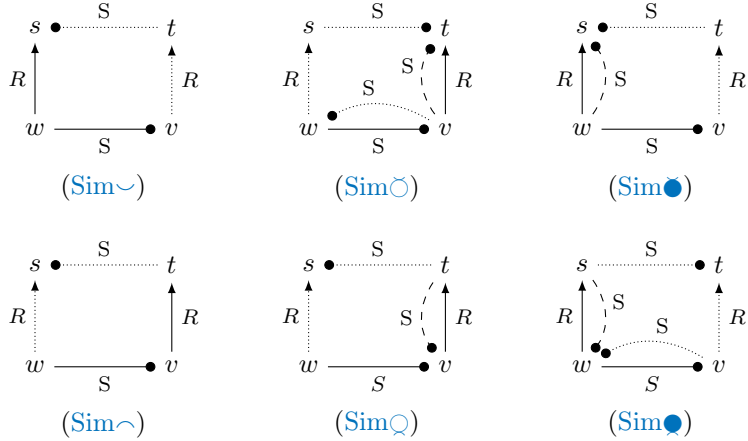

  The simulation conditions are depicted in Figure~\ref{fig:sim}.
  We observe that the dotted lines in the diagrams corresponding to~\eqref{it:sim-vsmile} and~\eqref{it:sim-wfrown}
  arise from extending the simulation condition for $\smile$ or $\frown$
  with an $\Sim$-connection from $w$ to $v$ or from $v$ to $w$.
  This reflects the compositional nature of the semantics given to those restoration modalities (recall De\-fi\-ni\-tion~\ref{sat:simil}),
  each intuitively consisting of truth or falsehood of $\phi$
  together with truth or falsehood of $\smile\phi$ or $\frown\phi$.
  A similar phenomenon occurs for~\eqref{it:sim-wsmile} and~\eqref{it:sim-vfrown}, with respect to the simulation conditions for $\Box$ and $\Diamond$ (which can be found in~\cite{KurRij97}).
  We revisit these ideas in Section~\ref{sec:combined}.

The conditions for a simulation must find an equilibrium: they need to be weak enough to allow for an intrinsic characterization result, yet strong enough to ensure the preservation of truth between linked worlds.
While the dotted conditions are indeed strong enough to prove that simulations preserve truth of formulas, they are too strong for an intrinsic characterization result.
To achieve the desired intrinsic characterization, we must therefore weaken the simulation conditions, that is, the simulations conditions must be easier to satisfy. 
This is the purpose of adding the dashed arrows. In fact, the dashed arrows in the diagrams for \eqref{it:sim-wsmile}, \eqref{it:sim-vsmile}, \eqref{it:sim-wfrown} and~\eqref{it:sim-vfrown} are required to prove intrinsic characterization results in Section~\ref{sec:hm}. This is showcased in Example~\ref{exm:dashed}, where we give a finite model where the intrinsic characterization result fails once we omit these arrows.
  
  Next, we prove 
  an \emph{invariance} (or \emph{adequacy}) theo\-rem that guarantees that simulation implies subsumption.

\begin{lemma}\label{lem:adeq}
  Let $\Lambda$ be a restorative similarity type, $\mo{M} = \struc{W, R, \{P_{k}\}_{k\in K}}$ be a Kripke model,
  and~$\Sim$ be a $\Lambda$-simulation on~$\mo{M}$.
  Then, for every formula~$\varphi$ in~$\lan_{\Lambda}$ and for all $w,v\in W$ such that $\Simin{w}{v}$, 
  \begin{equation*}
    w\Vdash\varphi\quad\text{implies}\quad
      v\Vdash\varphi.
  \end{equation*}
\end{lemma}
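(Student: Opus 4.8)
The plan is to prove the statement by induction on the structure of the formula $\varphi \in \lan_{\Lambda}$, keeping the $\Lambda$-simulation $\Sim$ on $\mo{M}$ fixed throughout; at each step I assume $\Simin{w}{v}$ and $w \Vdash \varphi$ and aim to derive $v \Vdash \varphi$. The base cases are immediate: $\varphi = \top$ needs no argument, $\varphi = \bot$ makes the hypothesis vacuous, and $\varphi = p_k$ is handled verbatim by condition (Sim$_k$). The cases $\varphi = \varphi_1 \wedge \varphi_2$ and $\varphi = \varphi_1 \vee \varphi_2$ follow at once from the induction hypothesis applied to $\varphi_1$ and $\varphi_2$, since conjunction and disjunction are interpreted componentwise and hence respect $\Sim$ coordinatewise.

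The real content is the inductive step for $\varphi = \Star\psi$ with $\Star \in \Lambda$. The uniform recipe is: unfold the relevant clause of Definition~\ref{sat:simil}, apply condition (Sim$\Star$) to the pair $(w,v)$ together with whichever $R$-edge the clause supplies, and then push the witness so obtained for $v$ along the induction hypothesis for $\psi$ --- in its \emph{contrapositive} reading, ``$\Simin{t}{s}$ and $s \not\Vdash \psi$ imply $t \not\Vdash \psi$'', precisely when the clause refers to $R$-successors at which $\psi$ is required to \emph{fail}. For $\Star \in \{\smile,\frown\}$ this is exactly the back-and-forth argument for simulations of positive modal logic and goes through without surprises. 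For the restoration modalities one observes that each relevant clause is a conjunction or disjunction of a \emph{local} condition on $w$ itself --- of the shape $w \Vdash \psi$ or $w \not\Vdash \psi$ --- with a $\Box$- or $\Diamond$-style condition on $R$-successors, whereas a simulation transports truth only in the forward direction. For $\wfrown$ and $\vsmile$, whose local condition is the \emph{positive} one $w \Vdash \psi$, forward transport suffices: conditions (Sim$\wfrown$) and (Sim$\vsmile$) are applied just as in the $\frown$ and $\smile$ cases, with the extra ``dashed'' disjunct ($\Simin{t}{v}$, respectively $\Simin{w}{s}$) disposed of directly by the induction hypothesis.

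The two genuinely delicate cases are $\wsmile$ and $\vfrown$, whose local condition is the \emph{negative} one $w \not\Vdash \psi$. Here I would argue by contradiction: assuming $v \not\Vdash \Star\psi$, extract from the failed clause an $R$-successor witnessing the failure, feed it and the appropriate pair to (Sim$\Star$), and then refute every disjunct of that condition using the induction hypothesis. The auxiliary link $\Simin{v}{w}$ built into (Sim$\wsmile$) and (Sim$\vfrown$) is exactly what carries the negative local information across: in the $\wsmile$ case, $\Simin{v}{w}$ together with $v \Vdash \psi$ forces $w \Vdash \psi$, which kills the $w \not\Vdash \psi$ disjunct of $w \Vdash \wsmile\psi$ and thereby forces its $\Box\psi$ disjunct; in the $\vfrown$ case, $\Simin{v}{w}$ lets one contrapose the induction hypothesis to pass from $w \not\Vdash \psi$ to $v \not\Vdash \psi$. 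I expect precisely this to be the main obstacle: one must keep straight which of the ``dotted'' and ``dashed'' alternatives of (Sim$\wsmile$)/(Sim$\vfrown$) (cf.\ Figure~\ref{fig:sim}) one is in, and within each whether a given $\Sim$-link is being used in the direct or the contrapositive direction of the induction hypothesis. Everything else is bookkeeping, and at no point does the argument need to reflect truth --- only to preserve it forward, which is exactly what the simulation conditions were tuned to guarantee.
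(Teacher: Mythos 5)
Your proposal is correct and follows essentially the same route as the paper's proof: a structural induction in which $\smile$ and $\frown$ are handled by the standard back-and-forth argument (direct and contrapositive, respectively), the dashed disjuncts of \eqref{it:sim-wfrown} and \eqref{it:sim-vsmile} are ruled out by the induction hypothesis, and the auxiliary link $\Simin{v}{w}$ in \eqref{it:sim-wsmile} and \eqref{it:sim-vfrown} is used exactly as you describe to transport the negative local conjunct across. Your classification of the cases by the polarity of the local condition matches the case analysis the paper carries out in Cases 3--6.
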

\begin{proof}
We do a structural induction on~$\varphi\in\lan_{\Lambda}$. 
If $\phi$ is either~$\top$ or~$\bot$ then the statement trivially holds.
The case where~$\phi$ is a propositional letter~$p_k$ follows immediately from the fact that $\Sim$ satisfies De\-fi\-ni\-tion~\ref{def:sim}\eqref{it:sim-prop} for every $k \in K$, and the cases where $\phi$ is either of the form $\psi \wedge \chi$ or of the form $\psi \vee \chi$ are routine. So we are left with the cases of the connectives belonging to the restorative similarity type~$\Lambda$.
\medskip

\noindent
  \textit{Case 1: $\phi=\smile\psi$ and $\smile \in \Lambda$.}
  Fix $w,v\in W$ such that $\Simin{w}{v}$ and suppose $w\Vdash\smile\psi$.
  By De\-fi\-ni\-tion~\ref{sat:simil}\eqref{it:sat:smile}, we may now obtain an~$s\in W$ such that $wRs$ and $s\not\Vdash\psi$. 
  By De\-fi\-ni\-tion~\ref{def:sim}\eqref{it:sim-smile}, given that $\Simin{w}{v}$ and $wRs$, we may obtain a~$t\in W$ such that $vRt$ and $\Simin{t}{s}$. From $\Simin{t}{s}$ and $s\not\Vdash\psi$, the induction hypothesis allows us to conclude that $t\not\Vdash\psi$.  Given that $vRt$, using  De\-fi\-ni\-tion~\ref{sat:simil}\eqref{it:sat:smile} again it follows that $v\Vdash\smile\psi$.
\medskip

\noindent
  \textit{Case 2: $\phi=\frown\psi$ and $\frown \in \Lambda$.}
  Fix $w,v\in W$ such that $\Simin{w}{v}$ and suppose by contraposition that $v\not\Vdash\frown\psi$.
  By \eqref{it:sat:frown}, we may obtain a $t\in W$ such that $vRt$ and $t\Vdash\psi$.
  By \eqref{it:sim-frown}, given $\Simin{w}{v}$ and $vRt$, we may obtain an $s\in W$ such that $wRs$ and $\Simin{t}{s}$.
  From $\Simin{t}{s}$ and $t\Vdash\psi$, the induction hypothesis allows us to conclude that $s\Vdash\psi$.  Given that $wRs$, using again \eqref{it:sat:frown} it follows that $w\not\Vdash\frown\psi$.
\medskip

\noindent
  \textit{Case 3: $\phi=\wsmile\psi$ and $\wsmile \in \Lambda$.}
  Fix $w,v\in W$ such that $\Simin{w}{v}$, and suppose by contraposition that $v\not\Vdash\wsmile\psi$.
  By~\eqref{it:sat:wsmile} we know that $v\Vdash\psi$ and we may obtain a $t\in W$ such that $vRt$ and $t\not\Vdash\psi$.
  Using the induction hypothesis we know that $\Simnin{v}{t}$, given that~$v\Vdash\psi$ yet $t\not\Vdash\psi$.
  Then, by~\eqref{it:sim-wsmile}, given that $\Simin{w}{v}$ and $vRt$ and $\Simnin{v}{t}$, we conclude that $\Simin{v}{w}$ and we may obtain an $s\in W$ such that $wRs$ and $\Simin{s}{t}$.
  Also by the induction hypothesis, given $\Simin{v}{w}$ and $v\Vdash\psi$, we have $w\Vdash\psi$.
  Again by the induction hypothesis, given $\Simin{s}{t}$ and $t\not\Vdash\psi$, we have $s\not\Vdash\psi$.
  Taking into account that $wRs$, that $w\Vdash\psi$ and that $s\not\Vdash\psi$, by~\eqref{it:sat:wsmile} we conclude that $w\not\Vdash\wsmile\psi$.
\medskip

\noindent
  \textit{Case 4: $\phi=\wfrown\psi$ and $\wfrown \in \Lambda$.}
  Fix $w,v\in W$ such that $\Simin{w}{v}$, and suppose by contraposition that $v\not\Vdash\wfrown\psi$.
  By~\eqref{it:sat:wfrown} we know that $v\not\Vdash\psi$ and we may obtain a $t\in W$ such that $vRt$ and $t\Vdash\psi$.
  Using the induction hypothesis we know that $\Simnin{t}{v}$, given that~$t\Vdash\psi$ yet $v\not\Vdash\psi$.
  Then, by~\eqref{it:sim-wfrown}, given that $\Simin{w}{v}$ and $vRt$ and $\Simnin{t}{v}$, 
  we may obtain an $s\in W$ such that $wRs$ and $\Simin{t}{s}$.
  Also by the induction hypothesis, given $\Simin{t}{s}$ and $t\Vdash\psi$, we have $s\Vdash\psi$.
  Again by the induction hypothesis, given $\Simin{w}{v}$ and $v\not\Vdash\psi$, we have $w\not\Vdash\psi$.
  Taking into account that $wRs$, that $w\not\Vdash\psi$ and that $s\Vdash\psi$, by~\eqref{it:sat:wfrown} we conclude that $w\not\Vdash\wfrown\psi$.
\medskip

\noindent
  \textit{Case 5: $\phi=\vsmile\psi$ and $\vsmile \in \Lambda$.}
  Fix $w,v\in W$ such that $\Simin{w}{v}$, and suppose $w\Vdash\vsmile\psi$.
  By~\eqref{it:sat:vsmile} we know that $w\Vdash\psi$ and we may obtain an $s\in W$ such that $wRs$ and $s\not\Vdash\psi$.
  Using the induction hypothesis we know that $\Simnin{w}{s}$, given that~$w\Vdash\psi$ yet $s\not\Vdash\psi$.
  Then, by~\eqref{it:sim-vsmile}, given that $\Simin{w}{v}$ and $wRs$ and $\Simnin{w}{s}$, 
  we may obtain a $t\in W$ such that $vRt$ and $\Simin{t}{s}$.
  Also by the induction hypothesis, given $\Simin{t}{s}$ and $s\not\Vdash\psi$, we have $t\not\Vdash\psi$.
  Again by the induction hypothesis, given $\Simin{w}{v}$ and $w\Vdash\psi$, we have $v\Vdash\psi$.
  Taking into account that $vRt$, that $v\Vdash\psi$ and that $t\not\Vdash\psi$, by~\eqref{it:sat:vsmile} we conclude that $v\Vdash\vsmile\psi$.
\medskip

\noindent
  \textit{Case 6: $\phi=\vfrown\psi$ and $\vfrown \in \Lambda$.}
  Fix $w,v\in W$ such that $\Simin{w}{v}$, and suppose $w\Vdash\vfrown\psi$.
  By~\eqref{it:sat:vfrown} we know that $w\not\Vdash\psi$ and we may obtain an $s\in W$ such that $wRs$ and $s\Vdash\psi$.
  Using the induction hypothesis we know that $\Simnin{s}{w}$, given that~$s\Vdash\psi$ yet $w\not\Vdash\psi$.
  Then, by~\eqref{it:sim-vfrown}, given that $\Simin{w}{v}$ and $wRs$ and $\Simnin{s}{w}$, 
  we conclude that $\Simin{v}{w}$ and
  we may obtain a $t\in W$ such that $vRt$ and $\Simin{s}{t}$.
  Also by the induction hypothesis, given $\Simin{v}{w}$ and $w\not\Vdash\psi$, we have $v\not\Vdash\psi$.
  Again by the induction hypothesis, given $\Simin{s}{t}$ and $s\Vdash\psi$, we have $t\Vdash\psi$.
  Taking into account that $vRt$, that $v\not\Vdash\psi$ and that $t\Vdash\psi$, by \eqref{it:sat:vfrown} we conclude that $v\Vdash\vfrown\psi$.
\end{proof}

\begin{theorem}[Adequacy]\label{thm:adeq}
  Let $\Lambda$ be a restorative similarity type.
  Then for all $w, v \in W$ of a Kripke model $\mo{M}$, 
  \begin{equation*}
    w \simul_{\Lambda} v
      \quad\text{implies}\quad
      w \subs_{\Lambda} v.
  \end{equation*}
\end{theorem}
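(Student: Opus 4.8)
The plan is to read off the adequacy theorem directly from Lemma~\ref{lem:adeq}. Unfolding the definition of $\Lambda$-similarity, the hypothesis $w \simul_{\Lambda} v$ says exactly that there is some $\Lambda$-simulation $\Sim$ on $\mo{M}$ with $\Simin{w}{v}$. Fix such an $\Sim$. Lemma~\ref{lem:adeq}, applied to this $\Sim$, states that for every $\varphi \in \lan_{\Lambda}$ and all worlds related by $\Sim$, truth of $\varphi$ at the source entails truth of $\varphi$ at the target; instantiating the pair of worlds to $(w,v)$ yields that $w \Vdash \varphi$ implies $v \Vdash \varphi$ for every $\varphi \in \lan_{\Lambda}$. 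That is precisely the defining condition of the subsumption relation, whence $w \subs_{\Lambda} v$, as required.

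Since the statement concerns two worlds of a single Kripke model, nothing further is needed. For the ``between models'' variant discussed around Definition~\ref{def:sim}, where $w_1 \simul_{\Lambda} w_2$ for worlds of $\mo{M}_1$ and $\mo{M}_2$ is to mean the existence of a $\Lambda$-simulation on $\mo{M}_1 \uplus \mo{M}_2$ linking $\incl_1(w_1)$ to $\incl_2(w_2)$, one would chain the argument above with the proposition asserting that each $\mo{M}_\ell$ validates the same formulas as its copy inside the disjoint union (stated there for $\lan_{\classneg,\Box,\Diamond}$, hence a fortiori for $\lan_{\Lambda}$); but this refinement is not invoked for the present theorem, and I would keep the proof to the single-model case.

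The step that does all the real work is already behind us: it is the inductive clauses of Lemma~\ref{lem:adeq}, in particular Cases~3--6, where the ``dotted'' disjuncts of the simulation conditions \eqref{it:sim-wsmile}--\eqref{it:sim-vfrown} are exactly what is needed to transport truth of the compound restoration modalities across $\Sim$. By contrast, the adequacy theorem itself carries no genuine obstacle: it is the bookkeeping move of pulling the existential quantifier over simulations out past the universal quantifier over formulas, so its proof is a two-line corollary of the preceding lemma.
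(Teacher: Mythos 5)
Your proposal is correct and matches the paper's proof exactly: the theorem is read off as an immediate corollary of Lemma~\ref{lem:adeq} by fixing a witnessing $\Lambda$-simulation and instantiating the lemma at the pair $(w,v)$. The additional remarks about the between-models variant are accurate but, as you note, not needed here.
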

\begin{proof}
Assuming there is some $\Lambda$-simulation~S such that $\Simin{w}{v}$, it immediately follows from the preceding lemma that~$v$ subsumes~$w$.
\end{proof}

  We illustrate how Theo\-rem~\ref{thm:adeq} can provide important information on the expressivity of a given modal language by proving two undefinability results.
  The idea we use for this is that if something were definable, then its truth would be preserved by simulations.

\begin{figure}[ht]
  \centering
    \begin{tikzpicture}[scale=.9, arrows=-latex]
      %% FIGURE 1
      %% nodes
        \node (w) at (-1,0)   {$w$};
        \node (s) at (-1,1.5) {$u$};
        \node (v) at ( 1,0)   {$v$};
      %% edges
        \draw[bend left=0] (w) to node[left]{\fns{$R$}} (s);
        \draw[-Circle] (w) to (v);
        \draw[-Circle, bend right=45] (w) to (s);
      %% FIGURE 2
      %% valuation
        \draw[-,fill=blue!10, rounded corners=4.5mm]
              (3.5,1.5) -- (3.5,2) -- (6.5,2) -- (6.5,-.5)
                        -- (5.5,-.5) -- (5.5,1) -- (3.5,1) -- (3.5,1.5);
      %% nodes
        \node (w) at (4,0)   {$w$};
        \node (u) at (4,1.5) {$u$};
        \node (v) at (6,0)   {$v$};
        \node (t) at (6,1.5) {$t$};
      %% edges
        \draw (w) to node[left]{\fns{$R$}} (u);
        \draw (v) to node[right]{\fns{$R$}} (t);
        \draw[-Circle] (w) to (v);
        \draw[-Circle, bend right=20] (u) to (t);
        \draw[-Circle, bend right=20] (t) to (u);
    \end{tikzpicture}
  \caption{Depictions of the models and simulations from
           Examples~\ref{exm:smile-vsmile} and~\ref{exm:neg}.
           Here, the bullet-headed arrows indicate a connection
           in the relation $\Sim$.
           The highlighted area gives information on the truth sets of the propositional letters: in the left picture these truth sets are all empty; in the right picture we hightlight the truth set of the propositional letter $p_\star$, all other truth sets being empty again.}
  \label{fig:exm-adeq}
\end{figure}
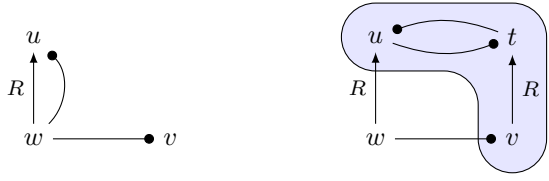

\begin{example}\label{exm:smile-vsmile}
  Consider the Kripke model $\mo{M} = \struc{W, R, \{P_{k}\}_{k\in K}}$ with
  $W = \{ w, v, u \}$, where $(w,u)$ is the only pair of worlds connected by the accessibility relation~$R$, and where $P_k= \emptyset$ for all
  $k \in K$. This is depicted in Figure~\ref{fig:exm-adeq} (left).
  We claim that $\Sim = \{ (w, u), (w, v) \}$ is a $\{ \vsmile \}$-simulation.
  Clearly~\eqref{it:sim-prop} holds for every $k\in K$.
  To see that~\eqref{it:sim-vsmile} holds as well, note that both in the case of $\Simin{w}{u}$ and in the case of $\Simin{w}{v}$ the condition \eqref{it:sim-vsmile} is satisfied by $\Simin{w}{u}$.
  We can use this simulation to prove
  that $\smile$
  is not definable in $\lan_{\vsmile}$.
  Take an arbitrary $k\in K$.
  Observe that we would then have
  $\mo{M}, w \Vdash \smile p_k$ because
  $P_k = \emptyset$. But $\mo{M}, v \not\Vdash \smile p_k$ because $v$
  does not have any successors. Therefore the truth of $\smile p_k$ is not preserved by
  $\{ \vsmile \}$-simulations, and since $\{ \vsmile \}$-simulations preserve the
  truth of all formulas in $\lan_{\vsmile}$,
  this shows that $\smile p_k$ is not equivalent to any formula in $\lan_{\vsmile}$.
\end{example}

\begin{example}\label{exm:neg}
  We can prove that a classical negation %
  is not definable in $\lan_{ \smile, \wsmile }$.
  Consider for example the Kripke model $\mo{M} = \struc{W, R, \{P_{k}\}_{k\in K}}$ with $\star\in K$ and
  $W = \{ w, v, u, t \}$, where $(w, u)$ and $(v, t)$ are the only pairs connected by~$R$, and
  where $P_\star = \{ v, u, t \}$ and $P_{\ell} = \emptyset$ for any propositional letter with $\ell\neq\star$.
  This is depicted in Figure~\ref{fig:exm-adeq} (right).
  Then the relation $\Sim = \{ (w, v), (u, t), (t, u) \}$ is easily seen
  to be a $\{ \smile, \wsmile \}$-simulation.
  Note that (i) $\mo{M}, w \not\Vdash p_\star$, while (ii) $\mo{M}, v \Vdash p_\star$. 
  Should a classical negation~$\classneg$ be available, from (i) we would conclude that (iii) $\mo{M}, w \Vdash \classneg p_\star$.
  Since $(w, v) \in \Sim$, from (iii) and (ii) we conclude that the truth of a classically negated formula is not preserved
  by $\{ \smile, \wsmile \}$-simulations. 
  So, due to Theo\-rem~\ref{thm:adeq},
  it cannot be defined to a formula in~$\lan_{\smile, \wsmile }$.
\end{example}

%%%%%%%%%%%%%%%%%%%%%%%%%%%%%%%%%%%%%%%%%%%%%%%%%%%%%%%%%%%%%%%%%%%%%%%%%%%%%%%%
\section{Simulations between models}\label{sec:sim-between}

\noindent
  So far we have defined simulations ranging over a single model.
  In view of De\-fi\-ni\-tion~\ref{def:disjunion}, a simple way to define simulations \emph{between} models is the following:

\begin{definition}\label{def:sim-between}
  A \emph{$\Lambda$-simulation} between two Kripke models
  $\mo{M}_1$ and $\mo{M}_2$ is a $\Lambda$-simulation on
  $\mo{M}_1 \uplus \mo{M}_2$.
\end{definition}

  The use of disjoint unions for defining (bi)simulations between models is not new (see for example \cite[De\-fi\-ni\-tion~3.3]{FanWanDit14}).
  Given models $\struc{W_1, R_1, \{P_{k,1}\}_{k\in K}}$ and $\struc{W_2, R_2, \{P_{k,2}\}_{k\in K}}$, a (bi)simulation from the former to the latter is usually formulated as a relation
  on $W_1 \times W_2$ that satisfies certain conditions.
  When we try to take this path, we run into two problems:
  \begin{enumerate}
    \item We sometimes need preservation of falsehood.
          For example, if a world $w \in W_1$ satisfies $\vsmile\phi$ then it has
          an $R_1$-successor $v$ where $\phi$ is false.
          If there is a simulation linking $w$ to $w' \in W_2$,
          then such $R_1$-successor~$v$ needs to be linked through the simulation  
          to an $R_2$-successor $v'$ of $w'$,
          because otherwise preservation of $\vsmile$ may fail.
          This can be achieved by using a second relation that is a simulation
          from $\mo{M}_2$ to $\mo{M}_1$.
          This naturally yields the notion of a \emph{directed simulation},
          given by a pair consisting of
          a \emph{forward relation} $F \subseteq W_1 \times W_2$ and a
          \emph{backward relation} $B \subseteq W_2 \times W_1$, used
          in e.g.~\cite[Section~5]{KurRij97} as well as in
          Definition~\ref{def:dir-sim} below.
    \item The simulation conditions for the restoration modalities require
          $\Sim$-connections between worlds of the same model (Example~\ref{exm:dashed} illustrates their necessity).
          This means that we do not just require relations \emph{between}
          $W_1$ and $W_2$, but we also need to express
          simulation relations \emph{on} $W_1$ and \emph{on} $W_2$.
          Therefore, on top of our forward and backward relations,
          we also need relations on each of the individual models to be part of our notion of
          simulation.
  \end{enumerate}
  Summarizing,
  if we want to describe a simulation between two Kripke models
  without viewing it as a relation on their disjoint union,
  then we need to use four different relations, namely
  a relation $\Sim_{ij} \subseteq W_i \times W_j$ for each $i, j \in \{ 1, 2 \}$.
  In order for such a quadruple $(\Sim_{11}, \Sim_{12}, \Sim_{21}, \Sim_{22})$ to be a
  $\Lambda$-simulation, each of the $\Sim_{ij}$ should satisfy analogues
  of the simulation conditions (\Sim${\Star}$), where $\Star \in \Lambda$.
  These analogues are obtained by replacing the $\Simin{w}{v}$ in the premise of
  the condition by $(w,v) \in \Sim_{ij}$, and then replacing each occurrence of
  $\Sim$ with $\Sim_{ij}$ and each occurrence of $R$ with $R_1$ or $R_2$
  in such a way that the expression is well typed.
  
  Consider for example (\Sim$_{12}\wsmile$).
  The premise becomes \guillemotleft If $\Simin[12]{w}{v}$ and $v R_2 t$\guillemotright,
  because $\Simin[12]{w}{v}$ implies that $v \in W_2$. So, for $v R t$ to make
  sense, $R$ should be the relation from $\mo{M}_2$.
  This is then followed by \guillemotleft$\Simin[22]{v}{t}$\guillemotright\ because both $v$ and $t$ are in $W_2$.
  Continuing the process, this yields
  \begin{enumerate}
    \setlength{\itemindent}{1em}
    \myitem{\Sim$_{12}\wsmile$}
            If $\Simin[12]{w}{v}$ and $v R_2 t$ then either
            \begin{itemize}
              \item $\Simin[22]{v}{t}$; or
              \item $\Simin[21]{v}{w}$ and there exists some $s \in W_1$ such that $w R_1 s$ and $\Simin[12]{s}{t}$
              \end{itemize}
  \end{enumerate}
  Accordingly, we introduce the following notion of a concrete $\Lambda$-simulation
  between Kripke models, which avoids explicit use of the disjoint union.
  
\begin{definition}
  A \emph{concrete $\Lambda$-simulation} between Kripke models
  $\mo{M}_1 = \struc{W_1, R_1, \{P_{k,1}\}_{k\in K}}$ and $\mo{M}_2 = \struc{W_2, R_2, \{P_{k,2}\}_{k\in K}}$
  consists of a quadruple $(\Sim_{11}, \Sim_{12}, \Sim_{21}, \Sim_{22})$ of
  relations such that $\Sim_{ij} \subseteq W_i \times W_j$ for each $i, j \in \{ 1, 2 \}$,
  which satisfy (\Sim$_{ij}\Star$) for each $i, j \in \{ 1, 2 \}$ and each
  $\Star \in \Lambda$.
\end{definition}

  Such a notion of simulation is no more than a reformulation of a $\Lambda$-simulation
  on the disjoint union of models:

\begin{proposition}
  Worlds in two models are related by a concrete $\Lambda$-simulation
  if and only if they are related by a $\Lambda$-simulation.
\end{proposition}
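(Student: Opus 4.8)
The plan is to exhibit a membership-preserving bijection between $\Lambda$-simulations on $\mo{M}_1 \uplus \mo{M}_2$ and concrete $\Lambda$-simulations between $\mo{M}_1$ and $\mo{M}_2$, and then read the proposition off. First I would isolate the structural fact that makes everything work: in the disjoint union of Definition~\ref{def:disjunion} the accessibility relation never crosses components, so $(\ell,w) R (\ell',v)$ forces $\ell = \ell'$ and then reduces to $w R_\ell v$, while $(\ell,w)\in P_k$ iff $w\in P_{k,\ell}$. Thus every world of the disjoint union carries a unique component label, and all of its $R$-successors share that label.

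Next I would set up the two translations. Given a $\Lambda$-simulation $\Sim$ on $\mo{M}_1 \uplus \mo{M}_2$, put $\Sim_{ij} := \{(w,v) \mid ((i,w),(j,v)) \in \Sim\} \subseteq W_i \times W_j$ for $i,j \in \{1,2\}$. Conversely, given a quadruple $(\Sim_{11},\Sim_{12},\Sim_{21},\Sim_{22})$, put $\Sim := \{((i,w),(j,v)) \mid (w,v)\in\Sim_{ij}\}$. These operations are mutually inverse and satisfy, by construction, $((i,w),(j,v))\in\Sim$ iff $(w,v)\in\Sim_{ij}$. Hence, once the two packages of conditions are shown to correspond, the proposition follows at once: the worlds $(i,w)$ and $(j,v)$ are related by some $\Lambda$-simulation on the disjoint union iff $(w,v)$ is related by some concrete $\Lambda$-simulation.

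The core step is the claim that $\Sim$ satisfies the conditions of Definition~\ref{def:sim} exactly when the quadruple satisfies the concrete conditions $(\Sim_{ij}\Star)$. For \eqref{it:sim-prop} this is immediate after unfolding the component labels. For each modal clause $(\mathrm{Sim}\,\Star)$ I would fix an arbitrary instance of its premise: this determines the component $i$ of its first argument and the component $j$ of its second, every $R$-edge occurring in the clause then automatically stays within one component (by the structural fact), and every $\Sim$-link occurring in it becomes a link of a uniquely determined $\Sim_{i'j'}$. Reading the clause off in these terms is precisely the well-typed substitution recipe described just before the definition of a concrete $\Lambda$-simulation --- and the worked example given there is exactly the case $(i,j)=(1,2)$, $\Star = \wsmile$. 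Since ranging over all instances of the premise is the same as ranging over the four choices of $(i,j)$ and, within each, over all instances of the corresponding concrete premise, the clause $(\mathrm{Sim}\,\Star)$ holds on $\mo{M}_1 \uplus \mo{M}_2$ iff each of $\Sim_{11},\Sim_{12},\Sim_{21},\Sim_{22}$ satisfies its concrete analogue; and this for every $\Star\in\Lambda$.

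I do not expect a substantive obstacle: the only delicate point is the type bookkeeping when unfolding the more intricate clauses, in particular which of the auxiliary $\Sim$-connections in $(\mathrm{Sim}\,\wsmile)$, $(\mathrm{Sim}\,\vfrown)$, $(\mathrm{Sim}\,\vsmile)$ and $(\mathrm{Sim}\,\wfrown)$ lands in $\Sim_{ii}$, $\Sim_{jj}$, $\Sim_{ij}$ or $\Sim_{ji}$. Carrying out the $\wsmile$ case in full and checking that it reproduces the displayed condition $(\Sim_{12}\wsmile)$ should make the pattern transparent, the remaining cases being entirely analogous and requiring no new idea.
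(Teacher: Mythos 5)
Your proposal is correct and follows essentially the same route as the paper: the paper's proof is exactly the two translations you describe (union the four relations to get a simulation on the disjoint union; intersect with $W_i \times W_j$ to recover the quadruple), with the condition-by-condition correspondence left implicit. Your version merely spells out the type-bookkeeping that the paper treats as routine.
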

\begin{proof}
  If $(\Sim_{11}, \Sim_{12}, \Sim_{21}, \Sim_{22})$ is a concrete $\Lambda$-simulation
  between $\mo{M}_1$ and $\mo{M}_2$, then $\Sim_{11} \cup \Sim_{12} \cup \Sim_{21} \cup \Sim_{22}$ is a $\Lambda$-simulation
  on $\mo{M}_1 \uplus \mo{M}_2$. Conversely, any $\Lambda$-simulation
  $\Sim$ on $\mo{M}_1 \uplus \mo{M}_2$ gives rise to a concrete $\Lambda$-simulation
  where $\Sim_{ij} := \Sim \cap (W_i \times W_j)$ for each $i, j \in \{ 1, 2 \}$.
\end{proof}

  The duplication of the simulation conditions not only makes the de\-fi\-ni\-tion
  of a concrete $\Lambda$-simulation much longer, but it also fails to reward
  with extra insight. 
  Therefore, we prefer in general to work with $\Lambda$-simulations
  and to define simulations between models as in Definition~\ref{def:sim-between}.
  
  However, in some cases we can cut down the amount of relations needed to explicitly express
  similarity between Kripke models.
  This happens when $\Lambda \subseteq \{ \smile, \frown \}$, because
  the simulation conditions for these modalities do not have a dashed arrow.
  The next de\-fi\-ni\-tion brings to mind work done on simulations for the basic modalities
  $\Box$ and $\Diamond$ in~\cite{KurRij97}. However, since we are working
  with negative modalities our simulation conditions require both
  preservation of truth and preservation of falsehood (i.e.\ reflection of truth).
  Therefore, when defining a notion of simulation between 
  Kripke models $\mo{M}_1 = \struc{W_1, R_1, \{P_{k,1}\}_{k\in K}}$ and $\mo{M}_2 = \struc{W_2, R_2, \{P_{k,2}\}_{k\in K}}$,
  we are forced to work with pairs of relations consisting of
  a \emph{forward relation} $F \subseteq W_1 \times W_2$ and
  a \emph{backward relation} $B \subseteq W_2 \times W_1$.
  This resembles indeed the directed intuitionistic bisimulations
  used in~\cite[Section~5]{KurRij97}.

\begin{definition}\label{def:dir-sim}
  Let $\Lambda \subseteq \{ \smile, \frown \}$.
  A \emph{directed $\Lambda$-simulation} between Kripke models
  $\mo{M}_1 = \struc{W_1, R_1, \{P_{k,1}\}_{k\in K}}$ and $\mo{M}_2 = \struc{W_2, R_2, \{P_{k,2}\}_{k\in K}}$ consists of a pair
  $(F, B)$ of relations $F \subseteq W_1 \times W_2$ and $B \subseteq W_2 \times W_1$
  that satisfies both~\eqref{it:forw-prop} and~\eqref{it:back-prop}, for every $k \in K$,
  and also both (F${\Star}$) and (B${\Star}$), for each $\Star \in \Lambda$, where:
  \begin{enumerate}
  \setlength{\itemindent}{1em}
    \myitem{F$_k$} \label{it:forw-prop}
            If $(w_1,w_2)\in F$ and $w_1 \in P_{k,1}$, then $w_2 \in P_{k,1}$ for all $k\in K$
    \myitem{B$_k$} \label{it:back-prop}
            If $(w_2,w_1)\in B$ and $w_2 \in P_{k,2}$, then $w_1 \in P_{k,2}$ for all $k\in K$
    \myitem{F${\smile}$} \label{it:forw-smile}
            If $(w_1,w_2)\in F$ and $w_1 R_1 v_1$, then there exists a $t_2 \in W_2$
            such that $w_2 R_2 t_2$ and $(t_2,v_1)\in B$
    \myitem{B${\smile}$} \label{it:back-smile}
            If $(w_2,w_1)\in B$ and $w_2 R_2 v_2$, then there exists a $t_1 \in W_1$
            such that $w_1 R_1 t_1$ and $(t_1,v_2)\in F$
    \myitem{F${\frown}$} \label{it:forw-frown}
            If $(w_1,w_2)\in F$ and $w_2 R_2 v_2$, then there exists a $t_1 \in W_1$
            such that $w_1 R_1 t_1$ and $(v_2,t_1)\in B$
    \myitem{B${\frown}$} \label{it:back-frown}
            If $(w_2,w_1)\in B$ and $w_1 R_1 v_1$, then there exists a $t_2 \in W_2$
            such that $w_2 R_2 t_2$ and $(v_1,t_2)\in F$
  \end{enumerate}
  We write $w \psimul_{\Lambda} v$ to indicate that there exists a directed $\Lambda$-simulation
  $(F, B)$ such that $(w, v) \in F$, and we call $\psimul_{\Lambda}$ the relation of \emph{directed $\Lambda$-similarity}.
\end{definition}

  We note that $(w, v) \in F$ does not necessarily entail $(v, w) \in B$,
  and similarly $(v, w) \in B$ does not entail $(w, v) \in F$.
  Furthermore, observe that the directed simulation conditions are essentially the same as
  the two left-hand conditions in Figure~\ref{fig:sim},
  except that we are now using two relations: one linking
  worlds in $\mo{M}_1$ to worlds in $\mo{M}_2$ and one linking worlds in
  the opposite direction.
  The next proposition proves that for appropriate $\Lambda$,
  the notions of $\Lambda$-similarity and directed $\Lambda$-similarity coincide.

\begin{proposition}\label{prop:dsim}
  Let $\mo{M}_1 = \struc{W_1, R_1, \{P_{k,1}\}_{k\in K}}$ and $\mo{M}_2 = \struc{W_2, R_2, \{P_{k,2}\}_{k\in K}}$ be
  two Kripke models and let $w_1 \in W_1$ and $w_2 \in W_2$.
  Suppose $\Lambda \subseteq \{ \smile, \frown \}$.
  Then $w_1 \simul_{\Lambda} w_2$ if and only if $w_1 \psimul_{\Lambda} w_2$.
\end{proposition}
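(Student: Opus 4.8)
The plan is to recognise the two notions as essentially reformulations of one another, once the definitions are unwound. By Definition~\ref{def:sim-between}, the assertion $w_1 \simul_\Lambda w_2$ (for $w_1 \in W_1$, $w_2 \in W_2$) means that some $\Lambda$-simulation $\Sim$ on $\mo{M}_1 \uplus \mo{M}_2$ satisfies $\Simin{\incl_1(w_1)}{\incl_2(w_2)}$. So the task reduces to matching, in a component-aware way, such simulations on the disjoint union against directed $\Lambda$-simulations $(F,B)$; the heart of the matter is that for $\Lambda \subseteq \{\smile,\frown\}$ this correspondence is completely rigid, because the conditions \eqref{it:sim-smile} and \eqref{it:sim-frown} are plain zig-zag conditions with no \guillemotleft dashed\guillemotright\ alternative.

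For the implication $w_1 \psimul_\Lambda w_2 \Rightarrow w_1 \simul_\Lambda w_2$, I would take a directed $\Lambda$-simulation $(F,B)$ with $(w_1,w_2) \in F$ and assemble the relation
\[
  \Sim := \{(\incl_1(a),\incl_2(b)) \mid (a,b)\in F\} \cup \{(\incl_2(b),\incl_1(a)) \mid (b,a)\in B\}
\]
on $\mo{M}_1 \uplus \mo{M}_2$. Then I would verify \eqref{it:sim-prop} (immediate from \eqref{it:forw-prop} and \eqref{it:back-prop}) and the conditions (Sim${\Star}$) for the relevant $\Star \in \Lambda$: for a pair coming from $F$, \eqref{it:sim-smile} is exactly \eqref{it:forw-smile} and \eqref{it:sim-frown} is exactly \eqref{it:forw-frown}, while for a pair coming from $B$ they become \eqref{it:back-smile} and \eqref{it:back-frown}; in each case one only observes that the witness supplied by the directed condition sits in the appropriate component of the disjoint union. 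Since $(\incl_1(w_1),\incl_2(w_2)) \in \Sim$, this yields $w_1 \simul_\Lambda w_2$.

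For the converse, starting from a $\Lambda$-simulation $\Sim$ on $\mo{M}_1 \uplus \mo{M}_2$ with $\Simin{\incl_1(w_1)}{\incl_2(w_2)}$, I would set
\[
  F := \{(a,b) \mid \Simin{\incl_1(a)}{\incl_2(b)}\}, \qquad B := \{(b,a) \mid \Simin{\incl_2(b)}{\incl_1(a)}\}.
\]
In other words, I keep the \guillemotleft cross\guillemotright\ pairs of $\Sim$ and drop any \guillemotleft internal\guillemotright\ ones (those within $\{1\}\times W_1$ or $\{2\}\times W_2$). The point that makes this legitimate is that the accessibility relation of the disjoint union respects components, so that whenever \eqref{it:sim-smile} or \eqref{it:sim-frown} is applied to a cross pair, the witnessing world it returns is forced into the opposite component — hence the witness is itself a cross pair and lies in $F$ or $B$. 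Unwinding this for each of the four clauses gives precisely \eqref{it:forw-smile}, \eqref{it:back-smile}, \eqref{it:forw-frown}, \eqref{it:back-frown}, and \eqref{it:sim-prop} gives \eqref{it:forw-prop} and \eqref{it:back-prop}; as $(w_1,w_2) \in F$, we obtain $w_1 \psimul_\Lambda w_2$.

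The main, and only mild, obstacle is the component bookkeeping in the disjoint union; there is no genuine difficulty. The one fact worth isolating explicitly is exactly why the \guillemotleft restrict to cross pairs\guillemotright\ step in the converse direction is sound: it works precisely because $\Lambda$ excludes the restoration modalities, whose simulation conditions \eqref{it:sim-wsmile}, \eqref{it:sim-wfrown}, \eqref{it:sim-vsmile} and \eqref{it:sim-vfrown} carry dashed arrows demanding $\Sim$-links \emph{between} worlds of the same model. This is the structural reason, already anticipated in Section~\ref{sec:sim-between}, why only the restoration-free similarity types enjoy a compact forward/backward presentation.
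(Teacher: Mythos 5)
Your proof is correct and follows essentially the same route as the paper: one passes between a $\Lambda$-simulation on $\mo{M}_1 \uplus \mo{M}_2$ and a directed pair $(F,B)$ by restricting to, respectively assembling from, the cross-component pairs, the key point being that for $\Lambda \subseteq \{\smile,\frown\}$ the witnesses demanded by \eqref{it:sim-smile} and \eqref{it:sim-frown} are forced into the opposite component, so no within-component pairs are ever needed. Your orientation of $B$ inside the disjoint union (keeping the pairs $(\incl_2(b),\incl_1(a))$ as they stand, so that preservation runs from first to second coordinate as \eqref{it:sim-prop} requires) is the correct one, and your explicit justification for discarding the internal pairs spells out exactly what the paper leaves as ``a simple verification.''
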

\begin{proof}
  Suppose $w_1 \simul_{\Lambda} w_2$. It follows that there exists a $\Lambda$-simulation $\Sim$ on $\mo{M}_1 \uplus \mo{M}_2$ such that
  $\Simin{w_1}{w_2}$.
  Now take $\Forw := \Sim \cap (W_1 \times W_2)$ and
  $\Back := \Sim \cap (W_2 \times W_1)$. A~simple verification then shows that $(\Forw, \Back)$ is a directed $\Lambda$-simulation.
  Since $w_1 \in W_1$ and $w_2 \in W_2$, we have $w_1 \Forw w_2$,
  hence $w_1 \psimul_{\Lambda} w_2$.
  The converse direction follows from the fact that for any
  directed simulation $(\Forw, \Back)$ between $\mo{M}_1$ and
  $\mo{M}_2$, the union $\Sim := \Forw \cup \Back^{-1}$ is a $\Lambda$-simulation
  on $\mo{M}_1\uplus \mo{M}_2$, where $\Back^{-1} := \{ (x, y) \mid (y, x) \in \Back \}$.
\end{proof}

  Combining Theo\-rem~\ref{thm:adeq} and Proposition~\ref{prop:dsim} yields
  invariance for directed $\Lambda$-simulations,
  where $\Lambda \subseteq \{ \smile, \frown \}$.
  In other words, $w_1 \psimul_{\Lambda} w_2$ implies $w_1 \subs_{\Lambda} w_2$.

%%%%%%%%%%%%%%%%%%%%%%%%%%%%%%%%%%%%%%%%%%%%%%%%%%%%%%%%%%%%%%%%%%%%%%%%%%%%%%%%
\section{Intrinsic characterization results}\label{sec:hm}

\noindent
  We have seen that the existence of a $\Lambda$-simulation implies subsumption.
  A straightforward adaptation of~\cite[Example~2.23]{BlaRijVen01} shows, however, that the
  converse does not hold in general.
  As is customary, we can show the
  converse holds for the class of image-finite Kripke models, and the result further extends to the class of modally saturated Kripke models.

  For the cases of $\smile$ and $\frown$ the proof is analogous to the classical
  or the positive case --- see e.g.~\cite[Theo\-rem~2.24]{BlaRijVen01}
  and~\cite[Proposition~3.4]{KurRij97}.
  However, the simulation conditions for the restoration modalities contain
  additional $\Sim$-connections (the dashed arrows in Figure~\ref{fig:sim}),
  which reflect the fact that these modalities are, in a certain sense, made up of $\smile\phi$
  or $\frown\phi$ together with truth or falsehood of $\phi$.
  The additional $\Sim$-connections provide an alternative way of satisfying a simulation condition. They are essential to the proof of the intrinsic characterization results in this section because the absence of such connections allows one to assume the
  existence of a formula $\chi$ that is true in one world and false in another.
  In Example~\ref{exm:dashed} we illustrate the failure of the intrinsic characterization result when we omit these dashed arrows from the simulation conditions. 

  We now introduce the notions of satisfiability and refutability. The latter is going to be particularly useful for dealing with negative modalities.
  \begin{definition}\label{def:sat-ref}
  Let $\mo{M} = \struc{W, R, \{P_{k}\}_{k\in K}}$ be a Kripke model and $U \subseteq W$.
  A set $\Delta$ of formulas is said to be
  \emph{satisfiable in $U$}
  if there exists a world $u \in U$ such
  that $\mo{M}, u \Vdash \phi$ for all $\phi \in \Delta$. 
  Analogously, $\Delta$ is said to be \emph{refutable in $U$}
  if there exists a world $u \in U$ such
  that $\mo{M}, u \not\Vdash \phi$ for all $\phi \in \Delta$.
  The set $\Delta$ is called \emph{finitely satisfiable in $U$} 
  if
  every finite subset of $\Delta$ is satisfiable in $U$, and
  \emph{finite refutability} is defined analogously.
  \end{definition}

   We also recall the de\-fi\-ni\-tions of image-finiteness and of modal sa\-tu\-ra\-tion~\cite{BlaRijVen01}, a compactness property that will be of interest to us:%
  \begin{definition}\label{def:mod-sat}
    Let $\mo{M} = \struc{W, R, \{ P_k \}_{k \in K} }$ be a Kripke model.
    We call $\mo{M}$ \emph{image-finite} if $R[w]$ is finite for every $w \in W$. 
    We call $\mo{M}$ \emph{modally saturated}
  if for every set $\Delta \subseteq \lan_{\classneg,\Box,\Diamond}$ of formulas and
  all $w \in W$,
  we have that $\Delta$ is satisfiable in $R[w]$ 
  whenever it is finitely satisfiable in $R[w]$.
\end{definition}

In what follows, where $\Delta$ is a finite set of formulas, we will use $\bigwedge\Delta$ and $\bigvee\Delta$ to denote the conjunction and disjunction of all the formulas in~$\Delta$, respectively. As usual, if $\Delta$ is a singleton $\{\delta\}$, then $\bigwedge\Delta = \bigvee\Delta = \delta$, and if $\Delta$ is empty then $\bigwedge\Delta$ is defined as~$\top$ and $\bigvee\Delta$ is defined as~$\bot$.
In the proofs of the following lemmas, we shall freely make use of the modal interpretations of the connectives (Definition \ref{sat:simil}), without explicit reference to the corresponding satisfaction conditions.

\begin{lemma}\label{lem:hm-finite}
  Let $\Lambda$ be a restorative similarity type
  and let $\mo{M} = \struc{W, R, \{P_{k}\}_{k\in K}}$ be an image-finite Kripke model.
  Define $\Sim$ as the subsumption relation,
  that is, set $(w, v) \in \Sim$ iff $w \subs_{\Lambda} v$.
  Then $\Sim$ satisfies \textup{(Sim${\Star}$)} for each $\Star \in \Lambda$.
\end{lemma}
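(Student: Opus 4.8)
The plan is to fix the Kripke model $\mo{M}$ and proceed by a case analysis over $\Star \in \Lambda$, checking in each case that the subsumption relation $\Sim$ (so $(w,v)\in\Sim$ iff $w\subs_\Lambda v$) satisfies the associated condition (Sim${\Star}$). The only structural hypothesis we shall ever invoke is image-finiteness, and always in the same way: if a world $x$ is such that every element of $R[x]$ carries a formula separating it from some fixed world, then, $R[x]$ being finite, we may fold these finitely many formulas into a single conjunction or disjunction, with the degenerate case $R[x]=\varnothing$ handled by the conventions $\bigwedge\varnothing=\top$ and $\bigvee\varnothing=\bot$ recalled above. (The clause \eqref{it:sim-prop} needed for $\Sim$ to be a full $\Lambda$-simulation is not part of the present statement and is anyway immediate: $w\subs_\Lambda v$ and $w\Vdash p_k$ give $v\Vdash p_k$.) All formulas built in the argument stay inside $\lan_\Lambda$, so that $w\subs_\Lambda v$ may be applied to them.

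For the negations $\smile$ and $\frown$ this is the textbook Hennessy--Milner argument, recast for ``negation as nonnecessity/impossibility''. To verify \eqref{it:sim-smile}: assume $w\subs_\Lambda v$ and $wRs$, and suppose toward a contradiction that no $t\in R[v]$ satisfies $t\subs_\Lambda s$. For each $t\in R[v]$ choose $\psi_t$ with $t\Vdash\psi_t$ and $s\not\Vdash\psi_t$, and put $\phi:=\bigvee_{t\in R[v]}\psi_t$. Then $s\not\Vdash\phi$, so $w\Vdash\smile\phi$ (witnessed by $s$ via \eqref{it:sat:smile}), hence $v\Vdash\smile\phi$ and some element of $R[v]$ refutes $\phi$ --- impossible, since each element of $R[v]$ satisfies its own disjunct. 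Condition \eqref{it:sim-frown} is dual: given $w\subs_\Lambda v$ and $vRt$, suppose no $s\in R[w]$ has $t\subs_\Lambda s$, choose separating $\psi_s$ with $t\Vdash\psi_s$ and $s\not\Vdash\psi_s$, and set $\phi:=\bigwedge_{s\in R[w]}\psi_s$; then $t\Vdash\phi$ while every $R$-successor of $w$ refutes $\phi$, so $w\Vdash\frown\phi$, hence $v\Vdash\frown\phi$, contradicting $vRt$ and $t\Vdash\phi$.

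The restoration modalities are the substantial cases, and here the dashed arrows of Figure~\ref{fig:sim} are exactly what makes the argument go through. Each of \eqref{it:sim-wsmile}, \eqref{it:sim-wfrown}, \eqref{it:sim-vsmile}, \eqref{it:sim-vfrown} presents two alternatives; the recipe is to assume its ``short'' alternative fails --- which yields a formula $\chi$ true at one of the two worlds that alternative names and false at the other --- and then to use $\chi$ as a \emph{guard} forcing the ``long'' alternative. I illustrate with \eqref{it:sim-wsmile}. Assume $w\subs_\Lambda v$ and $vRt$, and suppose $v\not\subs_\Lambda t$, so there is $\chi$ with $v\Vdash\chi$ and $t\not\Vdash\chi$; then $v\not\Vdash\wsmile\chi$ (by \eqref{it:sat:wsmile}), hence $w\not\Vdash\wsmile\chi$ since $w\subs_\Lambda v$, and unpacking \eqref{it:sat:wsmile} at $w$ gives both $w\Vdash\chi$ and the existence of an $R$-successor of $w$ refuting $\chi$. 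Running the same reasoning with $\chi\wedge\psi$ in place of $\chi$, for an arbitrary $\psi$ with $v\Vdash\psi$, delivers $w\Vdash\psi$, so $v\subs_\Lambda w$ --- the first conjunct of the long alternative. For the second, suppose no $s\in R[w]$ has $s\subs_\Lambda t$; choose separating $\theta_s$ with $s\Vdash\theta_s$ and $t\not\Vdash\theta_s$, put $\theta:=\bigvee_{s\in R[w]}\theta_s$, and observe that every $R$-successor of $w$ satisfies $\theta\vee\chi$, so $w\Vdash\wsmile(\theta\vee\chi)$, hence $v\Vdash\wsmile(\theta\vee\chi)$; as $v\Vdash\chi$, this forces every $R$-successor of $v$ to satisfy $\theta\vee\chi$, in particular $t\Vdash\theta\vee\chi$, contradicting $t\not\Vdash\theta$ and $t\not\Vdash\chi$. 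The remaining cases \eqref{it:sim-wfrown}, \eqref{it:sim-vsmile}, \eqref{it:sim-vfrown} follow the same template, swapping disjunctions for conjunctions according to whether the modality unpacks through $\smile$ or through $\frown$, and re-establishing the auxiliary subsumption clauses (the $v\subs_\Lambda w$ conjunct also occurs in \eqref{it:sim-vfrown}, and is absent from \eqref{it:sim-wfrown} and \eqref{it:sim-vsmile}) by the same guard-with-$\chi$ device.

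The step I expect to be the main obstacle is choosing the guard correctly in the restoration cases: the combined formula ($\chi\wedge\psi$, or $\theta\vee\chi$, or their duals) has to be simultaneously tailored so that the appropriate restoration-modal formula is \emph{true at $w$} --- allowing it to transfer to $v$ along $w\subs_\Lambda v$ --- and so that the semantics of that modality \emph{at $v$} then collides with the separating data at $t$ (or $s$). A further subtlety worth flagging is that the extra conjunct $v\subs_\Lambda w$ appearing in \eqref{it:sim-wsmile} and \eqref{it:sim-vfrown} genuinely requires the $\chi\wedge\psi$ sub-argument: it does not follow formally from $w\subs_\Lambda v$ alone.
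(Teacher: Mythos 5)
Your proposal is correct and follows essentially the same route as the paper's proof: image-finiteness yields the finite disjunction/conjunction of separating formulas in the Hennessy--Milner style for $\smile$ and $\frown$, and for the restoration modalities the separating formula $\chi$ obtained from the failure of the short alternative is used as a guard (via $\chi\wedge\psi$ or $\theta\vee\chi$) exactly as in the paper, including the correct observation that the extra $v\subs_\Lambda w$ conjunct must be established separately for \eqref{it:sim-wsmile} and \eqref{it:sim-vfrown}. The only difference is presentational: you fully write out three of the six cases and invoke the template for the rest, whereas the paper spells out all six.
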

\begin{proof}
  \textit{Case 1: $\Star = \smile$ and $\smile \in \Lambda$.}
    Assume (i) $\Simin{w}{v}$ and (ii) $w R s$, and suppose towards a contradiction that (iii) there exists no $t \in W$ such that $v R t$ and $\Simin{t}{s}$.  
    Thus, from (iii) and the definition of $\Sim$, it follows that (iv) given any $u \in R[v]$ we may obtain a formula~$\varphi_u$ such that $u \Vdash \varphi_u$ while $s \not\Vdash \varphi_u$.  Let~$\varphi$ be $\bigvee \{ \varphi_u \mid u \in R[v] \}$.  Image-finiteness guarantees that this is well defined.  
    Now, fix an arbitrary $t\in R[v]$. Then, from (iv) we have in particular that $t \Vdash \varphi_t$, so 
    we know that (v) $t \Vdash \varphi$.  
    Moreover, from (iv) we also have that (vi) $s \not\Vdash \varphi_t$.
    Given that $t$ is an arbitrary $R$-successor of~$v$,
    from (v) and (vi), respectively, we conclude that (vii) $v \not\Vdash \smile\varphi$ and (viii) $s\not\Vdash\varphi$.  From (viii), in view of assumption (ii),  
    it follows that (ix) $w \Vdash \smile\varphi$.  Given the definition of~$\Sim$, we note that (ix) and (vii) contradict assumption~(i).

  \medskip\noindent
  \textit{Case 2: $\Star = \frown$ and $\frown \in \Lambda$.}
  Assume (i) $\Simin{w}{v}$ and (ii) $v R t$, and suppose towards a contradiction that (iii) there exists no world $s$ such that $w R s$ and $\Simin{t}{s}$.  Thus, from (iii) and the definition of~$\Sim$ it follows that (iv) given any $u\in R[w]$ we may obtain a formula $\varphi_u$ such that $t\Vdash\varphi_u$ while $u\not\Vdash\varphi_u$.  Let $\varphi$ be $\bigwedge \{ \phi_u \mid u \in R[w] \}$. Image-finiteness guarantees that this is well defined. Now, fix an arbitrary $s\in R[w]$.  Then from (iv) we have in particular that $s\not\Vdash\varphi_s$, so we know that (v) $s\not\Vdash\varphi$.  Moreover, from (iv) we also have that (vi) $t\Vdash\varphi_s$.  Given that~$s$ is an arbitrary $R$-successor of~$w$, from (v) and (vi), respectively, we conclude that (vii) $w\Vdash\frown\varphi$ and (viii) $t\Vdash\varphi$.  From (viii), in view of assumption (ii), it follows that (ix) $v\not\Vdash\varphi$.  Given the definition of~$\Sim$, we note that (vii) and (ix) contradict assumption (i).
 
  \medskip\noindent
  \textit{Case 3: $\Star = \wsmile$ and $\wsmile \in \Lambda$.}
    Suppose $\Simin{w}{v}$ and $v R t$.
    If $\Simin{v}{t}$ then we are done, so suppose that this is not the case.
    Then by de\-fi\-ni\-tion of $\Sim$ there exists some formula $\chi$
    such that $v \Vdash \chi$ and $t \not\Vdash \chi$.
    Now we prove two things:
    \begin{enumerate}
      \item[(a)] $\Simin{v}{w}$;
      \item[(b)] there exists a world $s \in W$ such that $w R s$ and $\Simin{s}{t}$.
    \end{enumerate}
    We prove them one by one:
    \begin{enumerate}
    \item[(a)] Suppose towards a contradiction that $(v, w) \notin \Sim$.
    Then by de\-fi\-ni\-tion of $\Sim$ there exists a formula $\xi$
    such that $v \Vdash \xi$ while $w \not\Vdash \xi$.
    This implies that $w \not\Vdash \chi \wedge \xi$ and
    $v \Vdash \chi \wedge \xi$ and $t \not\Vdash \chi \wedge \psi$.
    The former entails $w \Vdash \wsmile(\chi \wedge \xi)$ while the
    latter two give $v \not\Vdash \wsmile(\chi \wedge \xi)$.
    This contradicts the assumption that $\Simin{w}{v}$,
    i.e.~that $w$ is subsumed by $v$.

    \item[(b)] Suppose towards a contradiction that there exists no world
    $s \in R[w]$ such that $\Simin{s}{t}$.
    Then for each such $s$ we can find a formula $\psi_s$ such that
    $s \Vdash \psi_s$ while $t \not\Vdash \psi_s$.
    By assumption $R[w]$ is finite, so we can define
    $\psi := \bigvee \{ \psi_s \mid s \in R[w] \}$.
    Then we have $s \Vdash \psi$ for all $s \in R[w]$, hence clearly
    also $s \Vdash \psi \vee \chi$ for all $s \in R[w]$.
    Therefore $w \Vdash \wsmile(\psi \vee \chi)$.
    On the other hand, we find $t \not\Vdash \psi$ and hence
    $t \not\Vdash \psi \vee \chi$. Moreover, the fact that $v \Vdash \chi$
    implies $v \Vdash \psi \vee \chi$. Combining this gives
    $v \not\Vdash \wsmile(\psi \vee \chi)$.
    Again, we find a contradiction with the fact that $w$ is subsumed
    by $v$. So our assumption must be false, and we can find some
    world $s$ such that $wRs$ and $\Simin{s}{t}$, as desired.
    \end{enumerate}
    
  \medskip\noindent
  \textit{Case 4: $\Star = \wfrown$ and $\wfrown \in \Lambda$.}
    Suppose $\Simin{w}{v}$ and $v R t$.
    If $\Simin{t}{v}$ then we are done, so suppose that this is not the case.
    Then by de\-fi\-ni\-tion of $\Sim$ there exists some formula $\chi$
    such that $t \Vdash \chi$ and $v \not\Vdash \chi$.

    Now suppose towards a contradiction that there exists no $s \in R[w]$
    such that $\Simin{t}{s}$. Then for each such $s$ we can find a formula
    $\psi_s$ such that $t \Vdash \psi_s$ and $s \not\Vdash \psi_s$.
    Let $\psi = \bigwedge \{ \psi_s \mid s \in R[w] \}$. Then we have $t \Vdash \psi \wedge \chi$
    and $v \not\Vdash \psi \wedge \chi$, so $v \not\Vdash \wfrown(\psi \wedge \chi)$. But also no $s \in R[w]$ satisfies $\psi \wedge \chi$,
    wherefore $w \Vdash \wfrown(\psi \wedge \chi)$.
    But then $\Simin{w}{v}$ implies $v \Vdash \wfrown(\psi \wedge \chi)$,
    a contradiction.

  \medskip\noindent
  \textit{Case 5: $\Star = \vsmile$ and $\vsmile \in \Lambda$.}
    Suppose $\Simin{w}{v}$ and $w R s$.
    If $\Simin{w}{s}$ then we are done, so suppose that this is not the case.
    Then by de\-fi\-ni\-tion of $\Sim$ there exists some formula $\chi$
    such that $w \Vdash \chi$ and $s \not\Vdash \chi$.
    Now suppose towards a contradiction that there exists no $t \in R[v]$
    such that $\Simin{t}{s}$. Then for each such $t$ we can find a formula
    $\psi_t$ such that $t \Vdash \psi_t$ and $s \not\Vdash \psi_t$.
    Let $\psi = \bigvee \{ \psi_t \mid t \in R[v] \}$.
    Then $t \Vdash \psi \vee \chi$ for all $t \in R[v]$.
    Furthermore, we have $s \not\Vdash \psi\lor\chi$ and $w \Vdash \psi\lor\chi$,
    and hence $w\Vdash \vsmile (\psi\lor\chi)$.
    By de\-fi\-ni\-tion of $\Sim$ we find $v\Vdash \vsmile(\varphi\lor\chi)$,
    so there must be a $t \in R[v]$ such that $t \not \Vdash \psi \vee \chi$,
    contradicting the assumption that all $t \in R[v]$ satisfy $\psi \vee \chi$.

  \medskip\noindent
  \textit{Case 6: $\Star = \vfrown$ and $\vfrown \in \Lambda$.}
    Suppose $\Simin{w}{v}$ and $w R s$.
    If $\Simin{s}{w}$ then we are done, so suppose that this is not the case.
    Then by de\-fi\-ni\-tion of $\Sim$ there exists some formula $\chi$
    such that $s \Vdash \chi$ and $w \not\Vdash \chi$.
    Now we prove two things:
    \begin{enumerate}
      \item[(a)] $\Simin{v}{w}$;
      \item[(b)] there exists a world $t \in W$ such that $v R t$ and $\Simin{s}{t}$.
    \end{enumerate}
    We prove them one by one:
    \begin{enumerate}
    \item[(a)] Suppose towards a contradiction that $(v, w) \notin \Sim$.
    Then by de\-fi\-ni\-tion of $\Sim$ there exists a formula $\xi$
    such that $v \Vdash \xi$ while $w \not\Vdash \xi$.
    This implies that $v \Vdash \chi \vee \xi$ and
    $w \not\Vdash \chi \vee \xi$ and $s \Vdash \chi \vee \psi$.
    The former entails $v \not\Vdash \vfrown(\chi \wedge \xi)$ while the
    latter two give $w \Vdash \vfrown(\chi \vee \xi)$.
    This contradicts the assumption that $\Simin{w}{v}$,
    i.e.~that $w$ is subsumed by $v$.
    \item[(b)] Suppose towards a contradiction that there exists no world
    $t \in R[v]$ such that $\Simin{s}{t}$.
    Then for each such $t$ we can find a formula $\psi_t$ such that
    $s \Vdash \psi_t$ while $t \not\Vdash \psi_t$.
    By assumption $R[v]$ is finite, so we can define
    $\psi := \bigwedge \{ \psi_t \mid t \in R[v] \}$.
    Then we have $t \not\Vdash \psi$, hence $t \not\Vdash \psi \wedge \chi$, 
    for all $t \in R[v]$.
    Therefore $v \not\Vdash \vfrown(\psi \wedge \chi)$.
    On the other hand, we find $s \Vdash \psi$ and hence
    $s \Vdash \psi \wedge \chi$. Moreover, the fact that $w \not\Vdash \chi$
    implies $w \not\Vdash \psi \wedge \chi$. Combining this gives
    $w \Vdash \vfrown(\psi \wedge \chi)$.
    Again, we find a contradiction with the fact that $w$ is subsumed
    by $v$. So our assumption must be false, and we can find some
    world $t$ such that $vRt$ and $\Simin{s}{t}$, as desired.
  \end{enumerate}
  This completes the proof of the lemma.
\end{proof}

\begin{theorem}\label{thm:hm-finite}
  Let $\Lambda$ be a restorative similarity type and
  $\mo{M} = \struc{W, R, \{P_{k}\}_{k\in K}}$ be an image-finite Kripke model. Then
  for all $w, v \in W$ we have
  \begin{equation*}
    w \subs_{\Lambda} v \iff w \simul_{\Lambda} v.
  \end{equation*}
\end{theorem}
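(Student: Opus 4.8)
The plan is to prove the two implications of the biconditional separately, and to observe that almost all the work has already been carried out. The right-to-left direction, $w \simul_{\Lambda} v \Rightarrow w \subs_{\Lambda} v$, is nothing other than the Adequacy theorem (Theorem~\ref{thm:adeq}), which holds for arbitrary Kripke models and in particular does not require image-finiteness; so there is nothing new to do there.

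For the left-to-right direction I would exhibit a concrete $\Lambda$-simulation witnessing $w \simul_{\Lambda} v$, and the natural candidate is the subsumption relation itself. That is, define $\Sim \subseteq W \times W$ by $(w,v) \in \Sim$ iff $w \subs_{\Lambda} v$, exactly as in Lemma~\ref{lem:hm-finite}. If $\Sim$ is a $\Lambda$-simulation, then from $w \subs_{\Lambda} v$ we get $(w,v) \in \Sim$, and hence $w \simul_{\Lambda} v$ directly from the definition of $\simul_{\Lambda}$ (which only asks for the \emph{existence} of some $\Lambda$-simulation linking the two worlds).

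It remains to check that this $\Sim$ satisfies the defining conditions of a $\Lambda$-simulation, namely~\eqref{it:sim-prop} for every $k \in K$ together with (Sim${\Star}$) for each $\Star \in \Lambda$. The propositional condition~\eqref{it:sim-prop} is immediate: every $p_k$ is a formula of $\lan_{\Lambda}$, so if $(w,v) \in \Sim$ and $w \in P_k$, i.e.\ $w \Vdash p_k$, then $v \Vdash p_k$, i.e.\ $v \in P_k$. Each modal condition (Sim${\Star}$) for $\Star \in \Lambda$ is precisely what Lemma~\ref{lem:hm-finite} establishes, and that is where image-finiteness is genuinely used --- to form the finite disjunctions or conjunctions $\bigvee\{\varphi_u \mid u \in R[\cdot]\}$ (resp.\ $\bigwedge\{\varphi_u \mid u \in R[\cdot]\}$) that witness the required distinguishing formulas.

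Consequently the theorem is essentially a repackaging of Theorem~\ref{thm:adeq} together with Lemma~\ref{lem:hm-finite}, and I would expect no real obstacle at this level: all the delicate case analysis (in particular the role of the dashed arrows for the restoration modalities) has been absorbed into Lemma~\ref{lem:hm-finite}. The only point needing a moment's care is the bookkeeping of the quantifier structure --- making explicit that the single relation $\Sim$ serves, uniformly for all pairs, as the witnessing $\Lambda$-simulation demanded by the definition of $\simul_{\Lambda}$.
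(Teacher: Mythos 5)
Your proposal is correct and follows exactly the paper's own argument: right to left is Theorem~\ref{thm:adeq}, and left to right takes the subsumption relation as the witnessing $\Lambda$-simulation via Lemma~\ref{lem:hm-finite}. Your explicit check of~\eqref{it:sim-prop} (which the lemma's statement does not formally cover) is a small but welcome addition of care over the paper's one-line proof.
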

\begin{proof}
  The direction from right to left follows from Theorem~\ref{thm:adeq}.
  The direction from left to right follows from the fact that,
  according to Lemma~\ref{lem:hm-finite},
  $\subs_{\Lambda}$ is a $\Lambda$-simulation.
\end{proof}

  The following example demonstrates the need for the $\Sim$-connections
  running parallel to an $R$-connection in the simulation conditions for
  the restoration modalities, i.e.\ the need for the dashed arrows from
  Figure~\ref{fig:sim}. Specifically, it demonstrates that if we
  omit such connections from~\eqref{it:sim-wsmile},
  then $w \subs_{\wsmile} v$ no longer guarantees $w \simul_{\wsmile} v$.

\begin{example}\label{exm:dashed}
  Let us work in the language $\lan_{\wsmile}$, and
  suppose the dashed arrow is omitted from the simulation condition
  for $\wsmile$, i.e.\~suppose that condition is simplified to: 
  \begin{enumerate}
    \myitem{\Sim${\wsmile}'$} \label{it:sim-wsmile-prime}
            If $\Simin{w}{v}$ and $v R t$ then
            \begin{itemize}
              \item $\Simin{v}{w}$ and there exists some $s \in W$ such that $w R s$ and $\Simin{s}{t}$.
            \end{itemize}
  \end{enumerate}
  Consider the Kripke model $\mo{M} = \struc{W, R, \{P_{k}\}_{k\in K}}$ with
  $W = \{ w, v, t \}$, $R = \{ (v, t) \}$ and $P_k = \{ t \}$ for every $k \in K$,
  depicted in Figure~\ref{fig:exm-hm}.
  Then, according to \eqref{it:sim-wsmile-prime}, any simulation relation $\Sim$ such that $(w, v) \in \Sim$ requires
  the existence of some world $u$ such that $wRu$. Since there is no such
  $u$ in the model, $w$ and $v$ cannot be related by a simulation.
  
  However, we claim that $v$ does subsume $w$. 
  %, i.e.~$w \subs_{\wsmile} v$.
  To see this, first observe that $t$ satisfies every formula in $\lan_{\wsmile}$
  except those equivalent to $\bot$.
  So, for each formula $\phi \in \lan_{\wsmile}$, either $v \not\Vdash \phi$ (if $\phi$ is equivalent to $\bot$)
  or $t \Vdash \phi$. Hence, $v \Vdash \wsmile\phi$.
  A routine induction on the structure of $\phi$ now proves that $w \Vdash \phi$ implies $v \Vdash \phi$, for all $\phi \in \lan_{\wsmile}$,
  so that indeed $w \subs_{\wsmile} v$.
  Thus, the simplified simulation condition~\eqref{it:sim-wsmile-prime}
  results in a failure of the intrinsic characterization result for
  $\lan_{\wsmile}$. Similar examples can be found for each of the
  other restoration modalities.
\end{example}

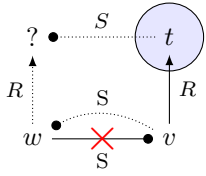
\begin{figure}
  \centering
    \begin{tikzpicture}[scale=.9, arrows=-latex]
      %% valuation
        \draw[-,fill=blue!10]
              (6,1.5) circle(.5);
      %% nodes
        \node (w) at (4,0)   {$w$};
        \node (v) at (6,0)   {$v$};
        \node (t) at (6,1.5) {$t$};
        \node (u) at (4,1.5) {?};
      %% edges
        \draw (v) to node[right]{\fns{$R$}} (t);
        \draw[-Circle] (w) to node[below,yshift=-2pt]{\fns{$\Sim$}}
                              node{\LARGE\textcolor{red}{$\times$}} (v);
        \draw[densely dotted,-Circle, bend right=30] (v) to node[above]{\fns{$\Sim$}} (w);
        \draw[densely dotted] (w) to node[left]{\fns{$R$}} (u);
        \draw[densely dotted,-Circle] (t) to node[above]{\fns{$S$}} (u);
    \end{tikzpicture}
  \caption{Depiction of the model from Example~\ref{exm:dashed}.
           The highlighted area shows the truth sets of all propositional letters.
           The bullet-headed arrow indicates our desired simulation connection,
           and the dotted arrows indicate the required connections (that we do not have).}
  \label{fig:exm-hm}
\end{figure}

  Our next goal is to generalize the previous theo\-rem to a larger class
  of Kripke models, namely for the modally saturated Kripke models.
  Since modal saturation is defined for the language $\lan_{\classneg,\Box,\Diamond}$, and all modalities
  we consider may be viewed as abbreviations in this language,
  it immediately follows that in any modally saturated Kripke model $\mo{M} = \struc{W, R, \{P_{k}\}_{k\in K}}$
  and for any world $w \in W$, and any restorative similarity type $\Lambda$,
  finite satisfiability (resp.\ refutability) of a set $\Delta$ of $\lan_{\Lambda}$-formulas
  in $R[w]$ implies satisfiability (resp.\ refutability) in $R[w]$:

\begin{lemma}
  Let $\Lambda$ be a restorative similarity type and
  $\mo{M} = \struc{W, R, \{P_{k}\}_{k\in K}}$ a modally saturated Kripke model.
  Then for all $w \in W$ and all $\Delta \subseteq \lan_{\Lambda}$,
  if $\Delta$ is finitely refutable in $R[w]$, then it is refutable in $R[w]$.
\end{lemma}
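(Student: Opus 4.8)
The plan is to reduce refutability in the restorative language to satisfiability in $\lan_{\classneg,\Box,\Diamond}$, where modal saturation is available by hypothesis, and then transfer the conclusion back. The engine of the argument is the observation, already recorded in the text just before De\-fi\-ni\-tion~\ref{def:simil} and again after De\-fi\-ni\-tion~\ref{sat:simil}, that every connective of a restorative similarity type is an abbreviation in $\lan_{\classneg,\Box,\Diamond}$. Consequently there is a translation $(\cdot)^{*}\colon\lan_{\Lambda}\to\lan_{\classneg,\Box,\Diamond}$ with $\mo{M},u\Vdash\phi$ iff $\mo{M},u\Vdash\phi^{*}$ for every world $u$ of every Kripke model; this is proved by a routine induction on $\phi$, using the clauses of De\-fi\-ni\-tion~\ref{sat:simil} and the matching derived truth conditions in $\lan_{\classneg,\Box,\Diamond}$.

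First I would fix $w\in W$ and $\Delta\subseteq\lan_{\Lambda}$ finitely refutable in $R[w]$, and set $\Delta^{\classneg}:=\{\classneg(\phi^{*})\mid\phi\in\Delta\}\subseteq\lan_{\classneg,\Box,\Diamond}$. Using (Sat$\classneg$) from Remark~\ref{rem:def:box} together with the translation property, one has for every $U\subseteq W$ and every world $u\in U$ that $u$ refutes all of $\Delta$ exactly when $u$ satisfies all of $\Delta^{\classneg}$; hence $\Delta$ is refutable in $U$ iff $\Delta^{\classneg}$ is satisfiable in $U$, and likewise for the finite versions, since the translation commutes with passing to finite subsets (a finite subset of $\Delta^{\classneg}$ is $\Theta^{\classneg}$ for a finite $\Theta\subseteq\Delta$). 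Therefore the hypothesis that $\Delta$ is finitely refutable in $R[w]$ says precisely that $\Delta^{\classneg}$ is finitely satisfiable in $R[w]$.

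Next I would invoke modal saturation of $\mo{M}$ (De\-fi\-ni\-tion~\ref{def:mod-sat}): since $\Delta^{\classneg}\subseteq\lan_{\classneg,\Box,\Diamond}$ is finitely satisfiable in $R[w]$, it is satisfiable in $R[w]$, so some $u\in R[w]$ satisfies every member of $\Delta^{\classneg}$. Translating back via the equivalence established in the previous step, this same $u$ refutes every member of $\Delta$, so $\Delta$ is refutable in $R[w]$, as required. (The dual statement for finite satisfiability, mentioned in the paragraph preceding the lemma, follows even more directly, with no negation needed: one simply applies saturation to $\{\phi^{*}\mid\phi\in\Delta\}$.)

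There is no real obstacle here; the only point that deserves care is the setup of the translation $(\cdot)^{*}$ and the remark that classical negation $\classneg$ is freely available inside $\lan_{\classneg,\Box,\Diamond}$ even though it need not be definable in $\lan_{\Lambda}$ — this is exactly why the reduction is legitimate, because modal saturation is a property quantifying over all of $\lan_{\classneg,\Box,\Diamond}$, not merely over $\lan_{\Lambda}$. One should also note that the translation of $\smile$ and $\frown$ (to $\Diamond\classneg$ and $\Box\classneg$) and of the restoration modalities (built by conjoining or disjoining these with $\phi$ or $\classneg\phi$) is uniform, so the induction is short and mechanical.
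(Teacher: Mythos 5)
Your proposal is correct and is essentially the argument the paper has in mind: the paper leaves this lemma without an explicit proof, but the paragraph immediately preceding it sketches exactly your reduction, namely that since modal saturation is stated for $\lan_{\classneg,\Box,\Diamond}$ and every $\Lambda$-connective abbreviates a formula of that language, refuting $\Delta$ amounts to satisfying the classically negated translations, so saturation applies. Your write-up merely makes the translation and the passage between refutability of $\Delta$ and satisfiability of $\Delta^{\classneg}$ explicit, which is a faithful expansion of the paper's one-line justification.
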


  We can use (finite) satisfiability and refutability to extend
  Lemma~\ref{lem:hm-finite} to the modally saturated setting.

\begin{lemma}\label{{lem:hm-msat}}
  Let $\Lambda$ be a restorative similarity type and
  $\mo{M} = \struc{W, R, \{P_{k}\}_{k\in K}}$ be a modally saturated Kripke model.
  Define $\Sim$ as the subsumption relation.
  Then $\Sim$ satisfies \textup{(S${\Star}$)} for each $\Star \in \Lambda$.
\end{lemma}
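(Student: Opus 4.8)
The plan is to re-run the case analysis of Lemma~\ref{lem:hm-finite}, one case for each $\Star \in \Lambda$, keeping each argument essentially verbatim and replacing only the single place where image-finiteness was used. In that proof, image-finiteness served exclusively to turn a family $\{\varphi_u\}_{u \in R[x]}$ of formulas into one formula $\bigvee_{u} \varphi_u$ or $\bigwedge_{u} \varphi_u$. Here I would instead obtain a \emph{finite} such combination from the compactness afforded by modal saturation, that is, from the preceding lemma (finite refutability in $R[x]$ implies refutability in $R[x]$) together with its already-noted counterpart for satisfiability. Since every $\lan_\Lambda$-formula abbreviates a $\lan_{\classneg,\Box,\Diamond}$-formula, both statements apply directly to all the sets of $\lan_\Lambda$-formulas that will arise.

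The mechanism is uniform. Recall that $\Simin{x}{y}$ holds exactly when $y$ satisfies every $\lan_\Lambda$-formula true at $x$, equivalently when $x$ refutes every $\lan_\Lambda$-formula false at $y$. So, in each case, the hypothesis that no $R$-successor $z$ of the relevant world is $\Sim$-related (in the direction demanded by the condition) to the world $x$ under consideration becomes either: the set of $\lan_\Lambda$-formulas false at $x$ is not refutable in the relevant $R[\,\cdot\,]$; or: the set of $\lan_\Lambda$-formulas true at $x$ is not satisfiable in the relevant $R[\,\cdot\,]$. By (the contrapositive of) the appropriate lemma it is then not \emph{finitely} refutable (resp.\ satisfiable), so some finite subset $\{\varphi_1, \dots, \varphi_n\}$ of it has the property that every $z$ in the relevant successor set satisfies some $\varphi_i$ (resp.\ falsifies some $\varphi_i$). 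Taking $\varphi := \varphi_1 \vee \cdots \vee \varphi_n$ in the first situation and $\varphi := \varphi_1 \wedge \cdots \wedge \varphi_n$ in the second recovers precisely the auxiliary formula of Lemma~\ref{lem:hm-finite}, and the remaining few lines of the case are copied unchanged. For instance, when $\Star = \smile$: from $\Simin{w}{v}$, $wRs$, and the assumption that no $t \in R[v]$ satisfies $\Simin{t}{s}$, one gets that the set of formulas false at $s$ is not finitely refutable in $R[v]$, chooses $\varphi$ as above so that every $t \in R[v]$ satisfies $\varphi$ (whence $v \not\Vdash \smile\varphi$), while $s \not\Vdash \varphi$ together with $wRs$ gives $w \Vdash \smile\varphi$, contradicting $\Simin{w}{v}$. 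The case $\Star = \frown$ is dual (satisfiability, conjunction). For the restoration modalities $\wsmile, \wfrown, \vsmile, \vfrown$ one first extracts the separating formula $\chi$ exactly as in Lemma~\ref{lem:hm-finite}; the sub-arguments establishing $\Simin{v}{w}$ in the $\wsmile$ and $\vfrown$ cases use no finiteness and transfer verbatim, while each successor-existence half applies the compactness step above, with the resulting Boolean combination disjoined or conjoined with $\chi$ just as before.

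The only genuine obstacle is the bookkeeping: in each case one must pair the hypothesis with the correct set — the $\lan_\Lambda$-formulas false at the relevant world versus those true at it — and with the refutability or satisfiability form of the compactness statement, in such a way that the Boolean combination of the extracted finite subset (a disjunction when one invokes refutability, a conjunction when one invokes satisfiability) plays exactly the role of the combination $\bigvee\{\varphi_u \mid u \in R[\,\cdot\,]\}$ or $\bigwedge\{\varphi_u \mid u \in R[\,\cdot\,]\}$ in the matching case of Lemma~\ref{lem:hm-finite}. This alignment is already dictated by that proof, so once it is respected, each case concludes exactly as before and nothing essentially new needs to be verified.
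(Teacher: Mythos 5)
Your proposal is correct and is essentially the paper's own proof: the paper likewise replaces the finite disjunctions/conjunctions of Lemma~\ref{lem:hm-finite} by an appeal to modal saturation applied to the set of $\lan_\Lambda$-formulas false (resp.\ true) at the relevant world, with the separating formula $\chi$ and the part establishing $\Simin{v}{w}$ carried over unchanged. The only difference is organizational --- you run each case by contradiction (no witness $\Rightarrow$ not finitely refutable/satisfiable $\Rightarrow$ a finite violating subset yields the auxiliary formula), whereas the paper argues directly (subsumption $\Rightarrow$ finite refutability/satisfiability $\Rightarrow$ saturation yields the witness) --- which is just the same argument read in the contrapositive direction.
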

\begin{proof}
  In contrast to the proof of Lemma~\ref{lem:hm-finite}, which relies on image-finiteness to obtain finite conjunctions and disjunctions, here we use modal saturation for the same effect. %
  We showcase three of the six cases, the others being similar.
  
  \medskip\noindent
  \textit{Case 1: $\Star = \smile$ and $\smile \in \Lambda$.}
    Let $w, v, s \in W$ be such that $\Simin{w}{v}$ and $w R s$.
    Let $\Delta := \{ \phi \in \lan_{\Lambda} \mid s \not\Vdash \phi \}$.
    Then $s \not\Vdash \bigvee\Delta_0$,
    hence $w \Vdash \smile(\bigvee\Delta_0)$, for each finite $\Delta_0 \subseteq \Delta$.
    Since $\Simin{w}{v}$, this implies that $v \Vdash \smile(\bigvee\Delta_0)$ for every finite
    $\Delta_0 \subseteq \Delta$, and therefore every finite subset of $\Delta$
    is refutable in $R[v]$.
    Since $\mo{M}$ is modally saturated, $\Delta$ must be refutable in $R[v]$.
    This implies that there exists some $t \in R[v]$ that refutes every formula
    that is refuted by $s$. This implies $t \subs_{\Lambda} s$, hence $\Simin{t}{s}$,
    as required.
 
  \medskip\noindent
  \textit{Case 3: $\Star = \wsmile$ and $\wsmile \in \Lambda$.}
    Let $w, v, t \in W$ be such that $\Simin{w}{v}$ and $v R t$.
    If $\Simin{v}{t}$ then we are done, so suppose that $(v,t)\not\in \Sim$. 
    Then by the definition of~$\Sim$ we may obtain a formula $\chi \in \lan_\Lambda$ that is true at $v$ but not at $t$.
    Now we prove two things:
    \begin{enumerate}
      \item[(a)] $\Simin{v}{w}$;
      \item[(b)] there exists a world $s \in W$ such that $w R s$ and $\Simin{s}{t}$.
    \end{enumerate}
    Item (a) proceeds as in Case 3 of Lemma~\ref{lem:hm-finite}, so we prove (b):
    \begin{enumerate}
      \item[(b)]
        Let $\Delta := \{ \phi \in \lan_{\Lambda} \mid t \not\Vdash \phi \}$.
        Then $\chi \in \Delta$, so $t \not\Vdash \chi \vee \bigvee \Delta_0$
        for any finite $\Delta_0 \subseteq \Delta$.
        Since $v \Vdash \chi$, we have $v \Vdash \chi \vee \bigvee\Delta_0$,
        hence $v \not\Vdash \wsmile(\chi \vee \bigvee\Delta_0)$ for any
        finite $\Delta_0 \subseteq \Delta$.
        Since $w \subs_{\Lambda} v$ by assumption, we have
        $w \not\Vdash \wsmile(\chi \vee \bigvee\Delta_0)$ for any finite
        $\Delta_0 \subseteq \Delta$, so that in particular $\Delta_0$ is
        refutable at some successor of $w$.
        This implies that $\Delta$ is finitely refutable in $R[w]$,
        hence by modal saturation there exists some $s \in R[w]$
        that does not satisfy any formula in $\Delta$.
        It follows that $s \subs_{\Lambda} t$, hence $\Simin{s}{t}$, as desired.
    \end{enumerate}
    
  \medskip\noindent
  \textit{Case 4: $\Star = \wfrown$ and $\wfrown \in \Lambda$.}
    Let $w, v, t \in W$ be such that $\Simin{w}{v}$ and $v R t$.
    If $\Simin{t}{v}$ then we are done, so suppose that this is not the case.
    Then by the de\-fi\-ni\-tion of $\Sim$ we may obtain a formula $\chi$ that
    is true at $t$ but not at $v$.
    
    Now let $\Delta := \{ \phi \in \lan_{\Lambda} \mid t \Vdash \phi \}$.
    Then $t \Vdash \chi \wedge \bigwedge\Delta_0$ for any finite $\Delta_0 \subseteq \Delta$.
    But by assumption $v \not\Vdash \chi$,
    so $v \not\Vdash \chi \wedge \bigwedge\Delta_0$,
    and hence $v \not\Vdash \wfrown(\chi \wedge \bigwedge\Delta_0)$.
    Since $w$ is subsumed by $v$ this implies
    $w \not\Vdash \wfrown(\chi \wedge \bigwedge\Delta_0)$,
    so there must be a successor of $w$ that satisfies $\bigwedge\Delta_0$.
    This implies that $\Delta$ is finitely satisfiable in $R[w]$,
    hence satisfiable because $\mo{M}$ is modally saturated.
    So there must be some $s \in R[w]$ that satisfies all formulas in $\Delta$.
    Therefore $t \subs_{\Lambda} s$, so $\Simin{t}{s}$.
\end{proof}

  Analogously to Theo\-rem~\ref{thm:hm-finite}, we now obtain:

\begin{theorem}[Intrinsic characterization]\label{thm:hm-msat}
  Let $\Lambda$ be a restorative similarity type and
  let $\mo{M} = \struc{W, R, \{P_{k}\}_{k\in K}}$ be a modally saturated Kripke model.
  Then for all $w, v \in W$,
  \begin{equation*}
    w \subs_{\Lambda} v \iff w \simul_{\Lambda} v.
  \end{equation*}
\end{theorem}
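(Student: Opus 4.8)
The plan is to reduce the biconditional to two ingredients that are already available, exactly as Theorem~\ref{thm:hm-finite} was obtained from Lemma~\ref{lem:hm-finite}. The implication $w\simul_\Lambda v\Rightarrow w\subs_\Lambda v$ is nothing but the Adequacy Theorem (Theorem~\ref{thm:adeq}), which makes no use of image-finiteness and so transfers verbatim to the modally saturated setting. For the converse I would show that the subsumption relation $\subs_\Lambda$ is \emph{itself} a $\Lambda$-simulation on $\mo{M}$; then $w\subs_\Lambda v$ exhibits the pair $(w,v)$ inside a $\Lambda$-simulation, so $w\simul_\Lambda v$ by the definition of $\Lambda$-similarity. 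To verify that $\subs_\Lambda$ is a $\Lambda$-simulation, condition Definition~\ref{def:sim}\eqref{it:sim-prop} is immediate --- if $w\subs_\Lambda v$ and $w\in P_k$ then $w\Vdash p_k$, hence $v\Vdash p_k$, i.e.\ $v\in P_k$ --- while the modal clauses, one per connective of $\Lambda$, are precisely the content of the preceding lemma on modally saturated models. So the theorem proper is a two-line argument; all the substance lives in that lemma.

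Accordingly, the real work is the modally saturated analogue of Lemma~\ref{lem:hm-finite}, and there the plan is to replay its six-case analysis with modal saturation doing the job that image-finiteness did before. Where the finite proof forms a finite disjunction $\bigvee\{\varphi_u\mid u\in R[v]\}$ (or conjunction) of the separating formulas, here one collects the whole type: for the $\smile$-clause, given $w\subs_\Lambda v$ and $wRs$, put $\Delta:=\{\varphi\in\lan_\Lambda\mid s\not\Vdash\varphi\}$; each finite $\Delta_0\subseteq\Delta$ is refuted at $s$, so $w\Vdash\smile(\bigvee\Delta_0)$, hence $v\Vdash\smile(\bigvee\Delta_0)$, so $\Delta_0$ is refutable in $R[v]$; thus $\Delta$ is finitely refutable in $R[v]$, and modal saturation produces $t\in R[v]$ refuting all of $\Delta$, i.e.\ $t\subs_\Lambda s$, as the clause requires. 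The restoration cases run the same way but must also feed in the ``dashed-arrow'' alternative of Figure~\ref{fig:sim}: e.g.\ for $\wsmile$, if $(v,t)\notin\subs_\Lambda$ one first picks a $\chi$ with $v\Vdash\chi$ and $t\not\Vdash\chi$, argues directly that $v\subs_\Lambda w$, and then runs the saturation argument on the glued formulas $\chi\vee\bigvee\Delta_0$ to obtain a successor $s$ of $w$ with $s\subs_\Lambda t$. One preliminary must be recorded first: modal saturation is stipulated only for $\lan_{\classneg,\Box,\Diamond}$, so one notes that every $\lan_\Lambda$-formula abbreviates an $\lan_{\classneg,\Box,\Diamond}$-formula, whence finite satisfiability (resp.\ refutability) of an $\lan_\Lambda$-set in $R[w]$ upgrades to genuine satisfiability (resp.\ refutability) --- the lemma stated just before.

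The main obstacle is confined to the restoration cases: one must choose $\Delta$ and the right ``glue'' ($\chi\vee\bigvee\Delta_0$ versus $\chi\wedge\bigwedge\Delta_0$) so that the saturation hypothesis applies and, simultaneously, the formula witnessing failure of one branch of the simulation clause forces the other branch to hold --- which is exactly what the dashed arrows encode. These alternatives are load-bearing rather than optional slack: Example~\ref{exm:dashed} shows that dropping them makes the left-to-right implication fail already on a three-point model. Once the modally saturated version of Lemma~\ref{lem:hm-finite} is in place, the theorem closes in the same two lines as Theorem~\ref{thm:hm-finite}: right-to-left by Adequacy (Theorem~\ref{thm:adeq}), left-to-right because $\subs_\Lambda$ is a $\Lambda$-simulation.
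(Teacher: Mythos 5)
Your proposal is correct and follows essentially the same route as the paper: the theorem itself reduces to Adequacy (Theorem~\ref{thm:adeq}) in one direction and, in the other, to the fact that $\subs_{\Lambda}$ is itself a $\Lambda$-simulation, which is exactly the content of the modally saturated analogue of Lemma~\ref{lem:hm-finite} (Lemma~\ref{{lem:hm-msat}}), proved by replacing finite conjunctions/disjunctions over $R[w]$ with full types $\Delta$ and invoking modal saturation, with the same handling of the dashed-arrow alternatives in the restoration cases. Even the preliminary observation you record --- that modal saturation, stipulated for $\lan_{\classneg,\Box,\Diamond}$, upgrades finite refutability of $\lan_{\Lambda}$-sets in $R[w]$ to refutability --- is the unnamed lemma the paper states just before.
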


  In the special case where the restorative similarity type $\Lambda$ contains no restoration modalities, we obtain the following particular result concerning the directed $\Lambda$-simulations introduced in Definition \ref{def:dir-sim}:
  
\begin{corollary}\label{cor:collapse:sim-dirsim}
  Let $\Lambda \subseteq \{ \smile, \frown \}$ and let
  $\mo{M} = \struc{W, R, \{P_{k}\}_{k\in K}}$ be a modally saturated Kripke model.
  Then for all $w, v \in W$, we have
  $w \subs_{\Lambda} v$ iff $w \psimul_{\Lambda} v$.
\end{corollary}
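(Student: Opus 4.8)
The plan is to derive both implications by combining the intrinsic characterization just proved (Theorem~\ref{thm:hm-msat}) with the translation between $\Lambda$-simulations on a single model and directed $\Lambda$-simulations (Definition~\ref{def:dir-sim}); since the corollary concerns two worlds $w,v$ of one model $\mo{M}$, I read $w\psimul_\Lambda v$ as the instance $\mo{M}_1=\mo{M}_2=\mo{M}$ of Definition~\ref{def:dir-sim}. The direction ``$w\psimul_\Lambda v$ implies $w\subs_\Lambda v$'' then requires nothing new: it is exactly the invariance for directed $\Lambda$-simulations recorded just after Proposition~\ref{prop:dsim} (which there follows from Theorem~\ref{thm:adeq} and Proposition~\ref{prop:dsim}), and it does not even use modal saturation.

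For the converse, assume $w\subs_\Lambda v$. Modal saturation lets me invoke Theorem~\ref{thm:hm-msat} to obtain a $\Lambda$-simulation $\Sim$ on $\mo{M}$ with $(w,v)\in\Sim$. I would then set $\Forw:=\Sim$ and $\Back:=\Sim$ and verify that $(\Forw,\Back)$ is a directed $\Lambda$-simulation between $\mo{M}$ and $\mo{M}$; this is precisely where the hypothesis $\Lambda\subseteq\{\smile,\frown\}$ is used, since for such $\Lambda$ the conditions to check are only \eqref{it:forw-prop}, \eqref{it:back-prop} and the pair (F${\Star}$), (B${\Star}$) for each $\Star\in\Lambda$ --- recall that for $\Lambda\subseteq\{\smile,\frown\}$ these are exactly the simulation conditions without dashed arrows. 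With $\mo{M}_1=\mo{M}_2=\mo{M}$ and $\Forw=\Back=\Sim$, a purely mechanical relabelling of the bound variables turns each of them into \eqref{it:sim-prop} or into (Sim${\Star}$) for the appropriate $\Star$, all of which $\Sim$ satisfies by construction. Since $(w,v)\in\Forw$, this yields $w\psimul_\Lambda v$.

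The single delicate point --- the step I would single out as the main, if minor, obstacle --- is keeping the orientations straight: a directed simulation carries two relations, and the backward clauses \eqref{it:back-prop} and (B${\Star}$) read propositional letters and successors \emph{forward along $\Back$ itself}, not along $\Back^{-1}$. One must therefore resist the instinct to take $\Back:=\Sim^{-1}$ --- which would destroy preservation of propositional letters and of the modalities in $\Lambda$ --- and instead notice that $\Back:=\Sim$ is already correct, because when $\mo{M}_1=\mo{M}_2=\mo{M}$ the clause \eqref{it:back-prop} is nothing but \eqref{it:sim-prop} and (B${\Star}$) is, after relabelling, (Sim${\Star}$). An alternative route is to cite Proposition~\ref{prop:dsim} directly with $\mo{M}_1=\mo{M}_2=\mo{M}$; this works too, but then one owes the extra observation that a $\Lambda$-simulation on $\mo{M}$ pulls back along the inclusion $\incl_1$ to a $\Lambda$-simulation on $\mo{M}\uplus\mo{M}$ (Definition~\ref{def:sim-between}), which the direct $\Forw=\Back=\Sim$ construction renders unnecessary.
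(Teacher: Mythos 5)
Your proposal is correct and follows essentially the route the paper intends for this corollary, namely combining the intrinsic characterization of Theorem~\ref{thm:hm-msat} (which gives a $\Lambda$-simulation $\Sim$ on $\mo{M}$ from $w \subs_{\Lambda} v$) with the correspondence between $\Lambda$-simulations and directed $\Lambda$-simulations from Proposition~\ref{prop:dsim}, the easy direction being the invariance remark recorded right after that proposition. Your explicit single-model instantiation with $\Forw=\Back=\Sim$, and in particular your observation that $\Back$ must be read in the same preservation orientation as $\Sim$ rather than as $\Sim^{-1}$, is a welcome piece of care that the paper leaves implicit, but it does not change the substance of the argument.
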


%\clearpage
%%%%%%%%%%%%%%%%%%%%%%%%%%%%%%%%%%%%%%%%%%%%%%%%%%%%%%%%%%%%%%%%%%%%%%%%%%%%%%%%
\section{Relative characterization results}\label{sec:vb}

\noindent
  In this section we prove a relative characterization result for our logics,
  parametric in the restorative similarity type $\Lambda$. Informally, this states that
  the corresponding restorative modal logic can be viewed as 
  the fragment of first-order logic with one free variable that is preserved by $\Lambda$-simulations.
  Our proof of this result runs along the lines of
  \cite[Theo\-rem~3.5]{KurRij97} and~\cite[Theo\-rem~2.68]{BlaRijVen01}.
  Recall from Definition~\ref{def:languages} that $\{ p_k \mid k \in K \}$ is an arbitrary but fixed set of propositional variables.
  We start by defining a suitable first-order language:

\begin{definition}\label{fol:sign}
  Consider a first-order signature~$\sigma$ containing a binary predicate symbol $\mathbf{R}$ and a unary predicate symbol $\mathbf{P}_k$ for each $k \in K$.
  Let $\Var$ be a set of variables. 
  We shall use $\FOL$ to refer to first-order classical logic over $\sigma$ and $\Var$.
\end{definition}

  Note that the first-order structures for $\FOL$ correspond precisely to
  Kripke models: the interpretation of the binary predicate symbol is given by
  the accessibility relation~$R$, while the interpretations of the unary predicate symbols represent the truth sets of propositional letters.
  Accordingly, we henceforth identify Kripke models and $\FOL$-structures. 
  There should be, however, no risk of confusion: in writing $\mo{M}, w \Vdash \gamma$ we will be implicitly assuming that $\gamma$ is a modal formula and dealing with the notion of satisfaction at a world~$w$ of a Kripke model~$\mo{M}$; in writing $\mo{M} \Vdash \gamma$ we will be implicitly assuming that $\gamma$ is a $\FOL$-formula and dealing with the usual notion of satisfaction at a first-order model~$\mo{M}$.

\begin{definition}\label{def:st}
  For each $\Lambda \subseteq \{ \smile, \frown, \wsmile, \wfrown, \vsmile, \vfrown \}$,
  the standard translation $\st : \Var\times\lan_{\Lambda} \to \FOL$ is defined recursively by setting 
  \begin{align*}
    \st(x, p_k) &:= \mathbf{P}_kx \\
    \st(x,\top)
      &:= (x = x)
      &\st(x, \phi \wedge \psi)
      &:= \st(x, \phi) \wedge \st(x, \psi) \\
    \st(x, \bot)
      &:= \classneg(x = x)
    &\st(x, \phi \vee \psi)
      &:= \st(x, \phi) \vee \st(x, \psi) \\[1mm]
    \st(x, \smile\phi)
      &:= (\exists y)(x \mathbf{R} y \wedge \classneg \st(y, \phi))
      &\st(x, \frown\phi)
      &:= (\forall y)(x \mathbf{R} y \to \classneg\st(y,\phi)) \\
    \st(x, \wsmile\phi)
      &:= \classneg\st(x, \phi) \vee (\forall y)(x \mathbf{R} y \to \st(y, \phi))
      &\st(x, \wfrown\phi)
      &:= \st(x, \phi) \vee (\forall y)(x \mathbf{R} y \to \classneg \st(y, \phi)) \\
    \st(x, \vsmile\phi)
      &:= \st(x, \phi) \wedge (\exists y)(x \mathbf{R} y \wedge \classneg\st(y, \phi))
      &\st(x, \vfrown\phi)
      &:= \classneg\st(x, \phi) \wedge (\exists y)(x \mathbf{R} y \wedge \st(y, \phi))
  \end{align*}
\end{definition}

  If $\alpha(x)$ is a $\FOL$-formula with one free variable $x$,
  and $\struc{W, R, \{P_{k}\}_{k\in K}}$ is a Kripke model where $w \in W$,
  then we denote by $\alpha(x)[w]$ the formula obtained from $\alpha(x)$
  by assigning $w$ to $x$.
  A routine induction on the structure of $\phi$ then allows us to verify that the standard translation indeed \guillemotleft captures\guillemotright\ the satisfaction clauses stated in Definition~\ref{sat:simil}, in the following precise sense:

\begin{lemma}\label{lem:st}
  For every Kripke model $\mo{M} = \struc{W, R, \{P_{k}\}_{k\in K}}$, every $w \in W$ and every
  $\phi \in \lan_\Lambda$, we have
  \begin{equation*}
    \mo{M}, w \Vdash \phi \iff \mo{M} \Vdash \st(x,\phi)[w].
  \end{equation*}
\end{lemma}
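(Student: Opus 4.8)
The plan is to proceed by structural induction on $\phi \in \lan_{\Lambda}$, matching each recursive clause of Definition~\ref{def:st} against the corresponding satisfaction condition from Definitions~\ref{def:box} and~\ref{sat:simil}. For the base cases, $\st(x,p_k)[w] = \mathbf{P}_k w$ holds in $\mo{M}$ iff $w \in P_k = \TS{p_k}{M}$ iff $\mo{M}, w \Vdash p_k$; and $\phi = \top$, $\phi = \bot$ are immediate, since $(x=x)[w]$ is satisfied and $\classneg(x=x)[w]$ is falsified in every $\FOL$-structure, matching (TrS$\top$) and (TrS$\bot$). For $\phi = \psi \wedge \chi$ and $\phi = \psi \vee \chi$, the standard translation commutes with the connective and the classical $\FOL$-semantics of $\wedge,\vee$ matches (TrS$\wedge$) and (TrS$\vee$), so the claim follows directly from the induction hypothesis applied to $\psi$ and $\chi$.

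The substantive work lies in the six modal cases, each handled by unfolding the relevant clause of Definition~\ref{sat:simil}, reading $wRv$ as $w\mathbf{R}v$, and invoking the induction hypothesis on the immediate subformula. For $\phi = \smile\psi$: $\mo{M}, w \Vdash \smile\psi$ holds iff there is $v$ with $wRv$ and $\mo{M}, v \not\Vdash \psi$; by the induction hypothesis the latter is equivalent to $\mo{M} \not\Vdash \st(y,\psi)[v]$, i.e.\ $\mo{M} \Vdash \classneg\st(y,\psi)[v]$, so the whole statement is equivalent to $\mo{M} \Vdash (\exists y)(x\mathbf{R}y \wedge \classneg\st(y,\psi))[w]$, which is exactly $\st(x,\smile\psi)[w]$. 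The case $\phi = \frown\psi$ is dual, turning \guillemotleft there exists $v$ with $wRv$ and $\not\Vdash$\guillemotright\ into \guillemotleft for all $v$, $wRv$ implies $\not\Vdash$\guillemotright, which matches the clause $(\forall y)(x\mathbf{R}y \to \classneg\st(y,\psi))$.

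For the four restoration modalities one additionally tracks a conjunct or disjunct contributed by $\psi$ or $\classneg\psi$ evaluated at $w$ itself: e.g.\ $\mo{M}, w \Vdash \wsmile\psi$ holds iff $\mo{M}, w \not\Vdash \psi$ or ($\mo{M}, v \Vdash \psi$ for all $v$ with $wRv$), and applying the induction hypothesis turns the first disjunct into $\mo{M} \Vdash \classneg\st(x,\psi)[w]$ and the second into $\mo{M} \Vdash (\forall y)(x\mathbf{R}y \to \st(y,\psi))[w]$, whose disjunction is precisely $\st(x,\wsmile\psi)[w]$; the cases $\wfrown$, $\vsmile$, $\vfrown$ differ only in whether the local contribution is $\psi$ or $\classneg\psi$, whether it is conjoined or disjoined, and whether the successor quantifier is universal (with $\to$) or existential (with $\wedge$) --- in each instance exactly the shape prescribed in Definition~\ref{def:st}. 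There is no real obstacle: the only point requiring care is the interplay between the object-level $\classneg$ in the $\FOL$ clauses and the metalinguistic \guillemotleft$\not\Vdash$\guillemotright\ in the modal satisfaction clauses, together with the correct scoping of the quantifier over successors, both of which are resolved uniformly by the induction hypothesis and the definition of $\FOL$-satisfaction. Accordingly the verification is entirely routine, and we only sketch it.
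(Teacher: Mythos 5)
Your proposal is correct and matches the paper's approach exactly: the paper itself only remarks that the lemma follows by "a routine induction on the structure of $\phi$," and your case-by-case unfolding of the satisfaction clauses against the clauses of Definition~\ref{def:st} is precisely that induction carried out. Nothing is missing.
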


  Preservation of $\FOL$-formulas %with one free variable 
  under $\Lambda$-simulations is defined as expected:

\begin{definition}
  We say that the $\FOL$-formula $\alpha(x)$ with one free variable $x$ is
  \emph{preserved by $\Lambda$-simulations} if for every $\Lambda$-simulation
  $\Sim$ between Kripke models $\mo{M} = \struc{W, R, \{P_{k}\}_{k\in K}}$
  and $\mo{M}' = \struc{W', R', \{ P_k' \}_{k \in K}}$ such that $(w, w') \in \Sim$,
  \begin{equation*}
    \mo{M} \Vdash \alpha(x)[w] \quad\text{implies}\quad \mo{M}'\Vdash \alpha(x)[w'].
  \end{equation*}
\end{definition}

  The above definition is designed to supply, within the \guillemotleft modal fragment of first-order logic\guillemotright, a notion serving as the analogue of homomorphisms between first-order structures.
  Now we can use the intrinsic characterization results for
  $\Lambda$-simulations to obtain results about the expressive strength of restorative modal logics:

\begin{theorem}[Relative characterization]\label{thm:vb}
  Let $\Lambda$ be a restorative similarity type.
  Let $\alpha(x)$ be a $\FOL$-formula with one free variable $x$.
  Then $\alpha(x)$ is equivalent to the standard translation of
  an $\lan_{\Lambda}$-formula if and only if it is preserved by
  $\Lambda$-simulations.
\end{theorem}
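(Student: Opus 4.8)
The plan is to adapt the classical Van Benthem argument (as in \cite[Theorem~2.68]{BlaRijVen01} and \cite[Theorem~3.5]{KurRij97}) to the present negation-free setting. For the easy (left-to-right) direction I would argue that if $\alpha(x)$ is equivalent to $\st(x,\varphi)$ with $\varphi\in\lan_{\Lambda}$ and $\Sim$ is a $\Lambda$-simulation between $\mo{M}$ and $\mo{M}'$ with $(w,w')\in\Sim$, then $\Sim$ is a $\Lambda$-simulation on $\mo{M}\uplus\mo{M}'$, so Theorem~\ref{thm:adeq} preserves $\varphi$ from $\incl_1(w)$ to $\incl_2(w')$; since the inclusion maps preserve $\lan_{\Lambda}$-truth (the proposition after Definition~\ref{def:disjunion}, noting that every $\lan_{\Lambda}$-formula abbreviates an $\lan_{\classneg,\Box,\Diamond}$-formula) and $\st$ captures modal truth (Lemma~\ref{lem:st}), $\mo{M}\Vdash\alpha(x)[w]$ would imply $\mo{M}'\Vdash\alpha(x)[w']$.

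For the converse, suppose $\alpha(x)$ is preserved by $\Lambda$-simulations; I would set $\mathrm{MC}(\alpha):=\{\st(x,\varphi)\mid\varphi\in\lan_{\Lambda},\ \alpha(x)\models\st(x,\varphi)\}$ and aim to show $\mathrm{MC}(\alpha)\models\alpha(x)$. This suffices: by compactness finitely many $\st(x,\varphi_1),\dots,\st(x,\varphi_n)\in\mathrm{MC}(\alpha)$ entail $\alpha(x)$, and since $\st$ commutes with $\wedge$ this reads $\st(x,\varphi_1\wedge\dots\wedge\varphi_n)\models\alpha(x)$; the reverse entailment is built into $\mathrm{MC}(\alpha)$, so $\alpha(x)$ would be equivalent to $\st(x,\varphi_1\wedge\dots\wedge\varphi_n)$ with $\varphi_1\wedge\dots\wedge\varphi_n\in\lan_{\Lambda}$.

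To prove $\mathrm{MC}(\alpha)\models\alpha(x)$ I would fix $\mo{M}$ and $w$ with $\mo{M}\Vdash\mathrm{MC}(\alpha)[w]$ and consider the $\FOL$ set $\Sigma(y):=\{\alpha(y)\}\cup\{\classneg\st(y,\varphi)\mid\varphi\in\lan_{\Lambda},\ \mo{M},w\not\Vdash\varphi\}$. The key step is that $\Sigma(y)$ is finitely satisfiable: were it not, we would get $\alpha(y)\models\st(y,\varphi_1)\vee\dots\vee\st(y,\varphi_n)$ for some $\varphi_i$ false at $w$; since $\st$ commutes with $\vee$ this forces $\st(x,\varphi_1\vee\dots\vee\varphi_n)\in\mathrm{MC}(\alpha)$, hence $\mo{M},w\Vdash\varphi_1\vee\dots\vee\varphi_n$, contradicting $\mo{M},w\not\Vdash\varphi_i$ for all $i$. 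Compactness together with Lemma~\ref{lem:st} then yields a Kripke model $\mo{N}$ and a world $v$ with $\mo{N}\Vdash\alpha(x)[v]$ and $\mo{N},v\subs_{\Lambda}\mo{M},w$. I would then pass to $\omega$-saturated elementary extensions $\mo{N}^{\ast}$ of $\mo{N}$ and $\mo{M}^{\ast}$ of $\mo{M}$ (these preserve $\FOL$-truth, hence $\lan_{\Lambda}$-truth, so $\mo{N}^{\ast}\Vdash\alpha(x)[v]$ and still $\mo{N}^{\ast},v\subs_{\Lambda}\mo{M}^{\ast},w$), observe that $\mo{N}^{\ast}\uplus\mo{M}^{\ast}$ is modally saturated (a disjoint union of modally saturated models is modally saturated, since $R[\cdot]$ and $\lan_{\classneg,\Box,\Diamond}$-truth are computed componentwise, and $\omega$-saturated structures are modally saturated), and invoke the intrinsic characterization Theorem~\ref{thm:hm-msat} to obtain a $\Lambda$-simulation $\Sim$ on $\mo{N}^{\ast}\uplus\mo{M}^{\ast}$ with $(\incl_1(v),\incl_2(w))\in\Sim$ --- that is, a $\Lambda$-simulation between $\mo{N}^{\ast}$ and $\mo{M}^{\ast}$ relating $v$ to $w$ (Definition~\ref{def:sim-between}). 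Preservation of $\alpha(x)$ under $\Lambda$-simulations gives $\mo{M}^{\ast}\Vdash\alpha(x)[w]$, whence $\mo{M}\Vdash\alpha(x)[w]$.

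I expect the only genuine obstacle to be the one already flagged in Section~\ref{sec:hm}: because $\lan_{\Lambda}$ lacks a classical negation, one cannot turn a finite conjunction of modal formulas true at $w$ into a single modal formula false at $w$, so the textbook ``modal saturation plus Hennessy--Milner'' endgame does not apply verbatim. The fix is to carry the argument through the $\FOL$ set $\Sigma(y)$ --- whose finite satisfiability only uses that $\st$ commutes with $\vee$ --- and to rely on the \emph{asymmetric} intrinsic characterization of Theorem~\ref{thm:hm-msat} rather than on a symmetric modal-equivalence statement; everything else (compactness, $\omega$-saturated elementary extensions, Lemma~\ref{lem:st}) is standard model theory.
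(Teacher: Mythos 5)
Your proposal is correct and follows essentially the same route as the paper's proof: the same set of entailed standard translations, the same compactness reduction, the same consistency argument for the set of negated translations of formulas failing at $w$, and the same endgame via $\omega$-saturated elementary extensions and Theorem~\ref{thm:hm-msat}. If anything, you are slightly more explicit than the paper about the disjoint-union bookkeeping needed to apply the intrinsic characterization across two models, which is a welcome clarification rather than a deviation.
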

\begin{proof}
  If $\alpha(x)$ is equivalent to the standard translation of some
  $\lan_{\Lambda}$-formula, then it follows from Theo\-rem~\ref{thm:adeq} and
  Lemma~\ref{lem:st} that it is preserved by $\Lambda$-simulations.
  So let us assume that $\alpha(x)$ is preserved by $\Lambda$-simulations.
We use~$\models$ for the standard notion of entailment in first-order logic.
  Consider the set 
  \begin{equation*}
    \mathcal{K}(x) := \{ \st(x,\psi) \mid \psi \in \lan_{\Lambda} \text{ and } \alpha(x) \models \st(x,\psi)\}
  \end{equation*}
  It suffices to show that $\mathcal{K}(x) \models \alpha(x)$,
  because then compactness for first-order logic entails the existence of
  $\st(x, \psi_1), \ldots, \st(x, \psi_n) \in \mathcal{K}(x)$ such that
  $\st(x, \psi_1) \wedge \cdots \wedge \st(x, \psi_n) \models \alpha(x)$,
  so that $\st(x, \psi_1 \wedge \cdots \wedge \psi_n) \models \alpha(x)$.
  Since $\alpha(x) \models \st(x, \psi_i)$ for each $i \in \{ 1, \ldots, n \}$,
  it then follows that $\alpha(x) \models \st(x, \psi_1 \wedge \cdots \wedge \psi_n)$,
  making $\alpha(x)$ equivalent to the standard translation
  of $\psi_1 \wedge \cdots \wedge \psi_n$.

  So, let $\mo{M} = \struc{W, R, \{P_{k}\}_{k\in K}}$ be a Kripke model, let $w \in W$ and assume
  $\mo{M} \Vdash \mathcal{K}(x)[w]$. We aim to show that $\mo{M} \Vdash \alpha(x)[w]$. 
  Let
  \begin{equation*}
    \mathcal{T}(x) = \{ \classneg\st(x, \psi) \mid \psi \in \lan_{\Lambda} \text{ and } \mo{M} \not\Vdash \st(x,\psi)[w]\}
  \end{equation*}
  We claim that $\mathcal{T}(x) \cup \{ \alpha(x) \}$ is consistent.
  Suppose not, then by compactness of first-order logic there exist
  $\classneg\st(x, \psi_1), \ldots, \classneg\st(x, \psi_n) \in \mathcal{T}(x)$ such that
  $\alpha(x) \models \classneg(\classneg\st(x, \psi_1) \wedge \cdots \wedge \classneg\st(x, \psi_n))$,
  hence
  \begin{equation*}
    \alpha(x) \models \st(x, \psi_1) \vee \cdots \vee \st(x, \psi_n).
  \end{equation*}
  This implies that
  $\st(x, \psi_1) \vee \cdots \vee \st(x, \psi_n) = \st(x, \psi_1 \vee \cdots \vee \psi_n) \in \mathcal{K}(x)$,
  so $\mo{M} \Vdash (\st(x, \psi_1)[w] \vee \cdots \vee \st(x, \psi_n))[w]$.
  But then $\mo{M} \Vdash \st(x, \psi_i)[w]$ for some $i \in \{ 1, \ldots, n \}$,
  contradicting the assumption that $\classneg\st(x, \psi_i) \in \mathcal{T}(x)$.
  Therefore $\mathcal{T}(x) \cup \{ \alpha(x) \}$ must be consistent.

  Since $\mathcal{T}(x) \cup \{ \alpha(x) \}$ is consistent, there must exist a Kripke model $\mo{N}$ and a world $v$ such that
  $\mo{N} \Vdash \classneg\st(x, \psi)[v]$ for every $\classneg\st(x, \psi) \in \mathcal{T}(x)$,
  and $\mo{N} \Vdash \alpha(x)[v]$.
  Therefore, by construction of $\mathcal{T}(x)$, if $\mo{N} \Vdash \st(x, \phi)[v]$ then $\mo{M} \Vdash \st(x, \phi)[w]$,
  for all $\phi \in \lan_{\Lambda}$, hence, by Lemma~\ref{lem:st}, $w$ subsumes $v$.
  
  Let $\mo{N}^*$ and $\mo{M}^*$ be $\omega$-saturated elementary extensions
  of $\mo{N}$ and $\mo{M}$, respectively, and let $v^*$ and $w^*$ be the
  images of $v$ and $w$~\cite[Theorems~4.1.9 and 6.1.1]{ChaKei73}.
  Then $\mo{N}^*, v^* \subs_{\Lambda} \mo{M}^*, w^*$.
  Moreover, since every $\omega$-saturated model is modally saturated
  (see e.g.~\cite[Theorem~2.65]{BlaRijVen01}),
  Theorem~\ref{thm:hm-msat} yields $\mo{N}^*, v^* \simul_{\Lambda} \mo{M}^*, w^*$.
  Finally, since $\mo{N} \Vdash \alpha(x)[v]$ we have
  $\mo{N}^* \Vdash \alpha(x)[v^*]$ and because $\alpha(x)$ is preserved by
  $\Lambda$-simulations we get $\mo{M}^* \Vdash \alpha(x)[w^*]$.
  Invariance of truth of classical first-order formulas under elementary embeddings then gives
  $\mo{M} \Vdash \alpha(x)[w]$, as desired.
\end{proof}

  Since $\Lambda$-similarity coincides with directed $\Lambda$-similarity
  when $\Lambda \subseteq \{ \smile, \frown \}$ (recall Corollary~\ref{cor:collapse:sim-dirsim}), we also get:

\begin{corollary}
  Let $\Lambda \subseteq \{ \smile, \frown \}$ and
  let $\alpha(x)$ be a $\FOL$-formula with one free variable $x$.
  Then $\alpha(x)$ is equivalent to the standard translation of
  a $\lan_{\Lambda}$-formula if and only if it is preserved by
  directed $\Lambda$-simulations.
\end{corollary}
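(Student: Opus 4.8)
The plan is to deduce the statement directly from Theorem~\ref{thm:vb}, by noting that for $\Lambda \subseteq \{\smile,\frown\}$ the property of being preserved by $\Lambda$-simulations coincides with that of being preserved by directed $\Lambda$-simulations. To this end one should first spell out the intended meaning of the latter, not yet defined in the text: a $\FOL$-formula $\alpha(x)$ with one free variable is \emph{preserved by directed $\Lambda$-simulations} if for every directed $\Lambda$-simulation $(\Forw, \Back)$ between Kripke models $\mo{M}_1$ and $\mo{M}_2$ and every pair $(w_1, w_2) \in \Forw$, one has that $\mo{M}_1 \Vdash \alpha(x)[w_1]$ implies $\mo{M}_2 \Vdash \alpha(x)[w_2]$.

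Next I would reformulate both preservation notions in terms of the associated similarity relations. Unfolding Definition~\ref{def:sim-between}, a $\FOL$-formula $\alpha(x)$ is preserved by $\Lambda$-simulations if and only if, for all Kripke models $\mo{M}_1, \mo{M}_2$ and all $w_1 \in W_1$ and $w_2 \in W_2$, the condition $w_1 \simul_{\Lambda} w_2$ (with $\simul_{\Lambda}$ computed on $\mo{M}_1 \uplus \mo{M}_2$) implies that $\mo{M}_1 \Vdash \alpha(x)[w_1]$ entails $\mo{M}_2 \Vdash \alpha(x)[w_2]$: the forward direction uses that $(w_1, w_2) \in \Sim$ already yields $w_1 \simul_{\Lambda} w_2$, and the converse uses that any witness for $w_1 \simul_{\Lambda} w_2$ is in particular a $\Lambda$-simulation containing the pair $(w_1, w_2)$. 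An entirely parallel reformulation, with directed $\Lambda$-similarity $\psimul_{\Lambda}$ in place of $\simul_{\Lambda}$ and $\Forw$ in place of $\Sim$, holds for preservation by directed $\Lambda$-simulations.

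Then I would invoke Proposition~\ref{prop:dsim}: since $\Lambda \subseteq \{\smile,\frown\}$, we have $w_1 \simul_{\Lambda} w_2$ if and only if $w_1 \psimul_{\Lambda} w_2$, for all $w_1 \in W_1$ and $w_2 \in W_2$. Substituting this equivalence into the two reformulations above shows that $\alpha(x)$ is preserved by $\Lambda$-simulations exactly when it is preserved by directed $\Lambda$-simulations, and the corollary follows at once from Theorem~\ref{thm:vb}, which identifies the $\FOL$-formulas equivalent to the standard translation of an $\lan_{\Lambda}$-formula with precisely those preserved by $\Lambda$-simulations. I do not anticipate any real obstacle here: the argument simply transports a preservation property along the equivalence of Proposition~\ref{prop:dsim}. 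The only point deserving a line of care is the bookkeeping between a $\Lambda$-simulation on a disjoint union and the pair of relations it induces on the components --- that is, checking that passing from $\Sim$ to $\Forw := \Sim \cap (W_1 \times W_2)$ and $\Back := \Sim \cap (W_2 \times W_1)$, and back via $\Sim := \Forw \cup \Back^{-1}$, respects the pairs of worlds we care about; but this is exactly what was already verified in the proof of Proposition~\ref{prop:dsim}.
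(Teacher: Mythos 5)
Your proposal is correct and follows exactly the paper's route: the paper's own proof is the one-line instruction ``Combine Proposition~\ref{prop:dsim} with Theorem~\ref{thm:vb}'', and your argument simply fills in the (sound) bookkeeping showing that the two preservation notions coincide under that proposition. No issues.
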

\begin{proof}
  Combine Proposition~\ref{prop:dsim} with Theorem~\ref{thm:vb}.
\end{proof}

%%%%%%%%%%%%%%%%%%%%%%%%%%%%%%%%%%%%%%%%%%%%%%%%%%%%%%%%%%%%%%%%%%%%%%%%%%%%%%%%
\section{Symmetrical simulations for restorative modal logics with a classical negation}\label{sec:symm}

\noindent
    The main logics considered so far in this work had their languages induced by restorative similarity types.
    By adding a classical negation to some of those logics they become exactly as expressive as \guillemotleft standard\guillemotright\ modal logics (the logics having $\{\classneg,\Box,\Diamond\}$ as their similarity type).
    For an example, if one simply adds a classical negation to a restorative modal logic with a diamond-minus connective, an appropriate notion of bisimulation is already known~\cite[Section~2.2]{BlaRijVen01}, because $\smile$ and $\Box$ are interdefinable in the presence of classical negation.
    However, this is not the case for every restorative similarity type
    (see Example~\ref{ex:undef:new}),
    so the study of bisimulations does not always boil down to the bisimulations for the language $\lan_{\classneg,\Box,\Diamond}$, which we will refer to as \emph{Kripke bisimulations}~\cite[Section~2.2]{BlaRijVen01}.

    In this section, we extend the results presented for logics with language $\lan_\Lambda$ with $\Lambda\subseteq \{ \smile, \frown, \wsmile, \wfrown, \vsmile, \vfrown \}$ to logics with language $\lan_{\{ \classneg \} \cup \Lambda}$,
    which we denote by $\lan_{\classneg,\Lambda}$.
    In this way, we obtain some results already studied in the literature, as in~\cite{Fan15}, as well as new ones.
    We do so by restricting our focus to \emph{symmetrical}
  $\Lambda$-simulations. 

\begin{definition}
  Let $\Lambda$ be a restorative similarity type.
  A \emph{symmetrical $\Lambda$-simulation} on a Kripke model
  $\mo{M} = \struc{W, R, \{ P_k \}_{k \in K}}$ is a $\Lambda$-simulation
  $\Sim$ on $\mo{M}$ such that
  $\Simin{w}{v}$ implies $\Simin{v}{w}$, for all $w, v \in W$.
  We write $w \symsim_{\Lambda} v$ if there exists a symmetrical $\Lambda$-simulation
  linking $w$ and $v$.
\end{definition}

\begin{example}\label{ex:sym-normal}
  If $\Lambda$ contains $\smile$ or $\frown$,
  then it follows immediately from symmetry that any
  symmetric $\Lambda$-simulation satisfies the usual
  Kripke bisimulation conditions~\cite[Definition~2.16]{BlaRijVen01}.
  Conversely, every Kripke bisimulation $B$ such that $B$ is
  symmetric clearly satisfies all simulation conditions from
  Definition~\ref{def:sim}, so it is a symmetric $\Lambda$-simulation for any restorative similarity type $\Lambda$.
  Since every Kripke bisimulation can be extended to a symmetric one, it follows that whenever either $\smile \in \Lambda$ or $\frown \in \Lambda$,
  symmetric $\Lambda$-bisimilarity and Kripke bisimilarity
  coincide.
  So in these cases the study of symmetric $\Lambda$-simulations boils down to Kripke bisimulations.
\end{example}

\begin{example}\label{ex:Fan}
  The restorative modal language with $\wsmile$ and a classical negation was introduced in~\cite{Mar04:accident}, where it was shown to be strictly less expressive than $\lan_{\sim,\Box,\Diamond}$.
  Subsequently, in~\cite{Fan15}, a notion of $\circ$-bisimulation for the corresponding logic, as well as
  intrinsic and relative characterization results,
  were provided.
  The notion of a $\circ$-bisimulation is similar to our symmetrical $\{ \wsmile \}$-simulation,
  but they are not exactly the same:
  every symmetrical $\{ \wsmile \}$-simulation is a $\circ$-bisimulation,
  but an arbitrary $\circ$-bisimulation $\mathrm{B}$ is a symmetrical $\{ \wsmile \}$-simulation
  if and only if $\mathrm{B}$ is a symmetric relation.
  Since every $\circ$-bisimulation $\mathrm{B}$ can be extended to a symmetrical $\circ$-bisimulation, its induced notion of bisimilarity coincides with
  symmetrical $\{ \wsmile \}$-similarity.
  In other words, two worlds can be linked by a $\circ$-bisimulation if and only if they
  can be linked by a symmetrical $\{ \wsmile \}$-simulation.
\end{example}

  If $w_1$ and $w_2$ are worlds in two Kripke models $\mo{M}_1$ and $\mo{M}_2$,
  then we write $w_1 \logeq_{\Lambda} w_2$ when they satisfy precisely the
  same $\lan_{\classneg,\Lambda}$-formulas. We note that this coincides with
  logical subsumption in $\lan_{\classneg,\Lambda}$,
  because preservation of the truth of the classical negation of a formula implies reflection of the truth of that formula.
  
  The symmetrical aspect of symmetrical $\Lambda$-simulations ensures that it at once
  preserves and reflects the truth of formulas. That is, worlds linked by a
  symmetrical $\Lambda$-simulation satisfy precisely the same formulas:

\begin{theorem}\label{thm:sym-adeq}
  Let $\Lambda$ be a restorative 
  similarity type and $\mo{M} = \struc{W, R, \{ P_k \}_{k \in K}}$
  a Kripke model. Then for all worlds $w, v \in W$,
  \begin{equation*}
    w \symsim_{\Lambda} v
    \quad\text{implies}\quad
    w \logeq_{\Lambda} v.
  \end{equation*}
\end{theorem}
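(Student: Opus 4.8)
The plan is to strengthen the statement to a biconditional and prove it by structural induction. Fix a symmetrical $\Lambda$-simulation $\Sim$ witnessing $w \symsim_\Lambda v$, i.e.\ with $\Simin{w}{v}$; since $\Sim$ is symmetric it coincides with its own converse, so the hypothesis $\Simin{a}{b}$ is freely interchangeable with $\Simin{b}{a}$. I would then prove, for every $\phi\in\lan_{\classneg,\Lambda}$ and all $a,b\in W$ with $\Simin{a}{b}$, that $a\Vdash\phi$ iff $b\Vdash\phi$; instantiating at $a=w$, $b=v$ and quantifying over~$\phi$ yields $w\logeq_\Lambda v$.

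For the base of the induction, $\top$ and $\bot$ are immediate, and for a propositional letter $p_k$ one direction is \eqref{it:sim-prop} applied to $\Simin{a}{b}$ and the other is \eqref{it:sim-prop} applied to $\Simin{b}{a}$. The cases $\phi=\psi\wedge\chi$ and $\phi=\psi\vee\chi$ follow directly from the induction hypothesis. The case $\phi=\classneg\psi$ is the only one genuinely absent from Lemma~\ref{lem:adeq}, and it is where symmetry is used essentially: $a\Vdash\classneg\psi$ iff $a\not\Vdash\psi$ iff $b\not\Vdash\psi$ --- the contrapositive of the induction hypothesis, which is a biconditional precisely because $\Sim$ is symmetric --- iff $b\Vdash\classneg\psi$. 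This is the formal content of the remark that a symmetrical simulation both preserves and reflects truth.

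For $\phi=\Star\psi$ with $\Star\in\Lambda$, I would reuse verbatim the corresponding case of the proof of Lemma~\ref{lem:adeq} for the implication $a\Vdash\Star\psi\Rightarrow b\Vdash\Star\psi$: that argument only ever transports truth or falsehood of the immediate subformula~$\psi$ along $\Sim$-connections --- something the now biconditional induction hypothesis supplies in both directions --- and otherwise uses only the simulation condition for~$\Star$ from Definition~\ref{def:sim} together with $\Simin{a}{b}$. The converse implication $b\Vdash\Star\psi\Rightarrow a\Vdash\Star\psi$ is obtained by running the same argument with $a$ and $b$ interchanged, which is permitted because $\Simin{b}{a}$ holds by symmetry and the simulation condition for~$\Star$ holds for the same relation $\Sim$. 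Thus each of the six modal clauses goes through in both directions, completing the induction.

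I expect the only mild subtlety to be the observation that one cannot simply quote Lemma~\ref{lem:adeq} as a black box: a formula of $\lan_{\classneg,\Lambda}$ may have $\classneg$ nested beneath the non-classical modalities, and such subformulas no longer lie in $\lan_\Lambda$, so the induction must genuinely be carried out over the full language $\lan_{\classneg,\Lambda}$. Once this is granted, every modal step is literally the one from Lemma~\ref{lem:adeq}, merely upgraded to a biconditional by the symmetry of $\Sim$. (When $\smile\in\Lambda$ or $\frown\in\Lambda$ one could instead invoke Example~\ref{ex:sym-normal} together with the classical invariance of $\lan_{\classneg,\Box,\Diamond}$-formulas under Kripke bisimulations, but the uniform inductive argument covers all restorative similarity types at once.)
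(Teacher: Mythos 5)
Your proof is correct and follows essentially the same route as the paper's: a structural induction over the full language $\lan_{\classneg,\Lambda}$ in which the negation case is discharged by the symmetry of $\Sim$ and every other case repeats the corresponding case of the adequacy lemma (which, as you rightly note, cannot be quoted as a black box because $\classneg$ may be nested under the modalities). The only cosmetic difference is that you carry a biconditional through the induction, whereas the paper carries the one-directional preservation statement quantified over all $\Sim$-pairs and recovers the equivalence from symmetry at the end.
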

\begin{proof}
  We use induction on the structure of $\phi$ to prove that for any
  symmetrical $\Lambda$-simulation $\Sim$ on $\mo{M}$ and all worlds $w, v \in W$,
  $w \Vdash \phi$ implies $v \Vdash \phi$.
  This suffices because, in the presence of a classical negation,
  $w \subs_{\classneg,\Lambda} v$ coincides with $w \logeq_{\Lambda} v$.
  All cases except for negation proceed as in the proof of
  Theo\-rem~\ref{thm:adeq}. The induction case for negation follows
  immediately from the symmetry of $\Sim$.
\end{proof}

  The intrinsic characterization result for modally saturated Kripke models
  from Theorem~\ref{thm:hm-msat} also readily carries over.
  Using the same proofs as in Section~\ref{sec:hm} one may check that
  $\subs_{\classneg,\Lambda}$ is a $\Lambda$-simulation. But since
  this coincides with $\logeq_{\Lambda}$, it follows that the resulting
  $\Lambda$-simulation is also symmetric. Accordingly:

\begin{theorem}[Symmetrical intrinsic characterization]\label{thm:shm}
  Let $\Lambda$ be a restorative similarity type and
  let $\mo{M} = \struc{W, R, \{P_{k}\}_{k\in K}}$ be a modally saturated Kripke model.
  Then, for all $w, v \in W$,
  \begin{equation*}
    w \logeq_{\Lambda} v \iff w \symsim_{\Lambda} v.
  \end{equation*}
\end{theorem}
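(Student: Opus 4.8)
The plan is to replay the argument behind Theorem~\ref{thm:hm-msat} in the presence of the classical negation~$\classneg$. The right-to-left implication is immediate from the symmetrical adequacy result, Theorem~\ref{thm:sym-adeq}: if some symmetrical $\Lambda$-simulation $\Sim$ relates $w$ and $v$, then $w$ and $v$ satisfy exactly the same $\lan_{\classneg,\Lambda}$-formulas, that is, $w \logeq_\Lambda v$.

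For the converse, define a relation $\Sim$ on $\mo{M}$ by setting $(w,v)\in\Sim$ iff $w\logeq_\Lambda v$. It suffices to show that $\Sim$ is a symmetrical $\Lambda$-simulation, for then the hypothesis $w\logeq_\Lambda v$ gives $(w,v)\in\Sim$ and hence $w\symsim_\Lambda v$. Symmetry of $\Sim$ is clear, since $\logeq_\Lambda$ is an equivalence relation; equivalently --- as observed just before the theorem --- $\logeq_\Lambda$ coincides with the subsumption relation $\subs_{\classneg,\Lambda}$, which is automatically symmetric because preservation of the truth of $\classneg\phi$ is the same as reflection of the truth of $\phi$. For the simulation conditions, \eqref{it:sim-prop} holds because each $p_k$ lies in $\lan_{\classneg,\Lambda}$, and for each $\Star\in\Lambda$ the condition (Sim$\Star$) is verified by a verbatim repetition of the relevant case in the proof of Lemma~\ref{lem:hm-finite} --- or of its modally saturated counterpart, the lemma immediately preceding Theorem~\ref{thm:hm-msat} --- with the language $\lan_\Lambda$ replaced throughout by $\lan_{\classneg,\Lambda}$. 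Indeed, all the distinguishing formulas produced there (the $\chi$, $\xi$, $\psi_s$ and the finite conjunctions and disjunctions built from them) now live in $\lan_{\classneg,\Lambda}$, which is still closed under $\wedge$ and $\vee$ and is still a sublanguage of $\lan_{\classneg,\Box,\Diamond}$, so that modal saturation of $\mo{M}$ (and the auxiliary passage from finite to unrestricted satisfiability and refutability) remains applicable exactly as before. The only change is cosmetic: where those proofs conclude $t\subs_\Lambda s$ we now read off $t\subs_{\classneg,\Lambda} s$, whence $(t,s)\in\Sim$ by the symmetry of $\subs_{\classneg,\Lambda}$.

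I do not expect a genuine obstacle here, since the substantive work was already done in Section~\ref{sec:hm}. The single point that warrants a word of care is why enlarging the language does no harm: $\classneg$ is not a member of the restorative similarity type $\Lambda$, so no new clause (Sim$\classneg$) is added to Definition~\ref{def:sim}, and the only effects of adjoining $\classneg$ are to enlarge the supply of distinguishing formulas and to force $\subs_{\classneg,\Lambda}$ to be symmetric --- neither of which interferes with the arguments showing that the subsumption relation meets the simulation conditions.
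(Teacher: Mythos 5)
Your proposal is correct and follows essentially the same route as the paper: the paper likewise observes that the arguments of Section~\ref{sec:hm} show the subsumption relation $\subs_{\classneg,\Lambda}$ to be a $\Lambda$-simulation, that this relation coincides with $\logeq_{\Lambda}$ and is therefore symmetric, and that the converse direction is Theorem~\ref{thm:sym-adeq}. Your added remarks --- that $\classneg\notin\Lambda$ so no new simulation clause arises, and that enlarging the language only enlarges the stock of distinguishing formulas while modal saturation still applies --- are exactly the points the paper leaves implicit.
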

  This allows us to derive a relative characterization result via a minor adaptation.
  First, we extend the standard translation from De\-fi\-ni\-tion~\ref{def:st} to
  a translation $\st : \Var \times \lan_{\classneg,\Lambda} \to \FOL$ by adding
  the recursive clause
  \begin{equation*}
    \st(x,\classneg\phi) =\classneg\st(x, \phi).
  \end{equation*}
  Second, we say that a first-order formula $\alpha(x)$ with one free
  variable $x$ is \emph{invariant under symmetrical $\Lambda$-simulations}
  if for every Kripke model $\mo{M} = \struc{W, R, \{ P_k \}_{k \in K}}$,
  all worlds $w, v \in W$, and every symmetrical $\Lambda$-simulation $\Sim$
  such that $\Simin{w}{v}$, we have
  \begin{equation*}
    \mo{M} \Vdash \alpha(x)[w] \iff \mo{M} \Vdash \alpha(x)[v].
  \end{equation*}
  Finally, we generalize Theo\-rem~\ref{thm:vb} to:

\begin{theorem}
  Let $\Lambda$ be a restorative similarity type, and $\alpha(x)$ a $\FOL$-formula
  with one free variable $x$.
  Then $\alpha(x)$ is equivalent to the standard translation of a
  $\lan_{\classneg,\Lambda}$-formula if and only if it is invariant under
  symmetrical $\Lambda$-simulations.
\end{theorem}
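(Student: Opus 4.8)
The plan is to transcribe the proof of Theorem~\ref{thm:vb} almost verbatim, using the standard translation extended to $\lan_{\classneg,\Lambda}$ (with the clause $\st(x,\classneg\phi)=\classneg\st(x,\phi)$) and invoking Theorem~\ref{thm:sym-adeq} and Theorem~\ref{thm:shm} in the roles previously played by Theorem~\ref{thm:adeq} and Theorem~\ref{thm:hm-msat}. The left-to-right implication is immediate: if $\alpha(x)$ is equivalent to $\st(x,\phi)$ for some $\phi\in\lan_{\classneg,\Lambda}$, then given a symmetrical $\Lambda$-simulation linking $w$ and $v$, Theorem~\ref{thm:sym-adeq} gives $w\logeq_{\Lambda}v$, so $w$ and $v$ agree on $\phi$, and the extended Lemma~\ref{lem:st} transfers this agreement to $\alpha(x)$.

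For the converse, suppose $\alpha(x)$ is invariant under symmetrical $\Lambda$-simulations, and put $\mathcal{K}(x):=\{\st(x,\psi)\mid\psi\in\lan_{\classneg,\Lambda}\text{ and }\alpha(x)\models\st(x,\psi)\}$. Exactly as in Theorem~\ref{thm:vb}, first-order compactness reduces the problem to showing $\mathcal{K}(x)\models\alpha(x)$: a finite subconjunction then yields a single $\lan_{\classneg,\Lambda}$-formula whose translation is equivalent to $\alpha(x)$. So fix a Kripke model $\mo{M}$ and a world $w$ with $\mo{M}\Vdash\mathcal{K}(x)[w]$; the goal is $\mo{M}\Vdash\alpha(x)[w]$. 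Here the presence of $\classneg$ simplifies the construction used in Theorem~\ref{thm:vb}: the two auxiliary sets $\mathcal{K}(x)$ and $\mathcal{T}(x)$ collapse into the single set $\mathrm{Tp}(w):=\{\st(x,\chi)\mid\chi\in\lan_{\classneg,\Lambda}\text{ and }\mo{M}\Vdash\st(x,\chi)[w]\}$, the full $\lan_{\classneg,\Lambda}$-type of $w$. A brief compactness argument shows that $\mathrm{Tp}(w)\cup\{\alpha(x)\}$ is consistent: this set is not inconsistent already for lack of $\alpha(x)$ (it is realised at $w$), so an inconsistency would give $\alpha(x)\models\st(x,\classneg(\chi_1\wedge\cdots\wedge\chi_n))$ for finitely many $\chi_i$ true at $w$; then $\st(x,\classneg(\chi_1\wedge\cdots\wedge\chi_n))\in\mathcal{K}(x)$ is true at $w$, forcing some $\chi_i$ to be false at $w$ — a contradiction. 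Hence $\mathrm{Tp}(w)$ is realised at a world $v$ of a model $\mo{N}$ that also satisfies $\alpha(x)$ there. By Lemma~\ref{lem:st} and closure of $\lan_{\classneg,\Lambda}$ under $\classneg$, $v$ satisfies exactly the $\lan_{\classneg,\Lambda}$-formulas true at $w$; this is the point where closure under $\classneg$ upgrades the mere subsumption obtained in Theorem~\ref{thm:vb} to full logical equivalence, $w\logeq_{\Lambda}v$.

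The final step follows the same route as before. Pass to $\omega$-saturated elementary extensions $\mo{M}^{*}$ and $\mo{N}^{*}$, with distinguished images $w^{*}$ and $v^{*}$; being $\omega$-saturated they are modally saturated, and elementariness preserves all (extended) standard translations, so $w^{*}\logeq_{\Lambda}v^{*}$. Relocating into the disjoint union $\mo{N}^{*}\uplus\mo{M}^{*}$ — which is again modally saturated, since each set of successors lies inside a single summand and the inclusions preserve and reflect truth of $\lan_{\classneg,\Box,\Diamond}$-formulas, so that $w^{*}\logeq_{\Lambda}v^{*}$ persists there — Theorem~\ref{thm:shm} supplies a symmetrical $\Lambda$-simulation linking $v^{*}$ and $w^{*}$. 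Reading invariance of $\alpha(x)$ as applying to simulations between models (as in the definition of ``preserved'' preceding Theorem~\ref{thm:vb}), this transports $\mo{N}^{*}\Vdash\alpha(x)[v^{*}]$ to $\mo{M}^{*}\Vdash\alpha(x)[w^{*}]$, and invariance of first-order truth under elementary embeddings yields $\mo{M}\Vdash\alpha(x)[w]$, as required.

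The bulk of this is a mechanical adaptation of Theorem~\ref{thm:vb}; the only genuinely new bookkeeping is (i) justifying the collapse of the two auxiliary sets and the passage from subsumption to $\logeq_{\Lambda}$ via closure under $\classneg$, and (ii) the by-now-familiar care needed because Theorem~\ref{thm:shm} is stated for a single model whereas $v^{*}$ and $w^{*}$ come from different models — handled, as in Theorem~\ref{thm:vb}, by carrying out the last transfer inside the disjoint union and checking that modal saturation and $\logeq_{\Lambda}$ survive that operation. I expect (ii) to be the main subtlety, since it also forces one to fix the precise reading of ``invariant under symmetrical $\Lambda$-simulations'' (single model versus between models) so that the closing transfer step is licensed; it is nonetheless routine once disjoint unions are treated carefully.
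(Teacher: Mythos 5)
Your proposal is correct and follows essentially the same route as the paper, whose own proof simply says to repeat the argument for Theorem~\ref{thm:vb} with the extended standard translation, appealing to Theorem~\ref{thm:shm} in place of Theorem~\ref{thm:hm-msat} (and Theorem~\ref{thm:sym-adeq} in place of Theorem~\ref{thm:adeq}). The extra observations you make --- that closure under $\classneg$ lets the sets $\mathcal{K}(x)$ and $\mathcal{T}(x)$ be handled via the full type of $w$ and upgrades subsumption to $\logeq_{\Lambda}$, and that the final transfer should be read inside the disjoint union --- are sound refinements of the same argument rather than a different approach.
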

\begin{proof}
  The proof is exactly the same as that of Theorem~\ref{thm:vb},
  except that we appeal to the symmetrical intrinsic characterization result (Theorem~\ref{thm:shm})
  instead of its counterpart from Theorem~\ref{thm:hm-msat}.
\end{proof}

  This theorem allows us to recover (a slight variation of) the characterization
  result from \cite[Theorem~29]{Fan15}. Moreover, it yields new
  characterization results for other modal logics with a classical negation whose
  restorative similarity types are subsets of
  $\{ \smile, \frown, \wsmile, \wfrown, \vsmile, \vfrown \}$.

  Example \ref{ex:sym-normal} considered languages sharing the same expressive power as  $\lan_{\classneg,\Box,\Diamond}$ and showed that, for those languages, the concept of a symmetrical simulation coincides with Kripke bisimulation. 
  But some restorative similarity types give rise to languages
  that are strictly less expressive than
  $\lan_{\classneg,\Box,\Diamond}$. For example, it was shown in \cite{Mar04:accident} that $\Box$ is not definable in the language $\lan_{\wsmile,\classneg}$.
  Using the tools developed in this section, we establish that the languages presented in Example~\ref{ex:sym-normal},
  i.e.~languages of the form $\lan_{\sim,\Lambda}$ where
  $\Lambda$ contains $\smile$ or $\frown$, are the only ones that are equally expressive as $\lan_{\classneg,\Box,\Diamond}$. 
  Consequently, apart from the language $\lan_{\wsmile,\classneg}$ where the concept of bisimulation was introduced in~\cite{Fan15}, the characterization results proved above are all new.

\begin{example}\label{ex:undef:new}
  Consider the Kripke model $\mo{M} = \struc{W, R, \{ P_k \}_{k \in K}}$ where $W = \{ w, v \}$, $R = \{ (w, w) \}$ and $P_k = \emptyset$ for all $k \in K$ (see Figure \ref{fig:count-box-def}).
  Then a routine verification shows that $\Sim = \{ (w, v), (v, w), (w,w) \}$ is a symmetric $\Lambda$-simulation for any
  $\Lambda \subseteq \{ \wsmile, \vsmile, \wfrown, \vfrown \}$.
  Therefore Theorem \ref{thm:sym-adeq} guarantees that $w$ and $v$ satisfy precisely the same $\lan_{\sim,\Lambda}$-formulas.
  
  Now observe that $w\not\Vdash \Box \bot$ and $v \Vdash \Box \bot$. This implies that $\Box\bot$ is not definable in $\lan_{\sim,\Lambda}$.
  Since $\Box$ is definable whenever we have $\smile$ or $\frown$, it follows that none of $\Box, \Diamond, \smile$ or $\frown$ is definable in $\lan_{\sim,\Lambda}$.
  Furthermore, this demonstrates that the study of simulations for languages of the form $\lan_{\sim,\Lambda}$ with 
  $\Lambda \subseteq \{ \wsmile, \vsmile, \wfrown, \vfrown \}$
  is different from the study of Kripke bisimulations.
    \begin{figure}[h!]
        \centering
        \begin{tikzpicture}[scale=.9, arrows=-latex]
      %% nodes 
        \node (w) at (0,0) {$w$};
        \node (v) at (2,0) {$v$};
      %% edges
        \draw[-Circle] (w) to node[below]{\fns{$\Sim$}} (v); 
        \draw[-Circle] (v) to node[below]{\fns{$\Sim$}} (w); 
        \draw[-Circle, loop, in=55, out=125, distance=10mm] (w) to node[above]{\fns{$\Sim$}} (w);
        \draw[loop, in=-55, out=-125, distance=10mm] (w) to node[below]{\fns{$R$}} (w);
        \end{tikzpicture}
        \caption{Kripke model $\struc{W, R, \{P_k\}_{k\in K}}$ such that $w,v\in W$, $P_k=\varnothing$ for every $k \in K$, and $R=\{(w,w)\}$, together with a simulation $S=\{(w,v),(v,w),(w,w)\}$}
        \label{fig:count-box-def}
    \end{figure}
\end{example}

%%%%%%%%%%%%%%%%%%%%%%%%%%%%%%%%%%%%%%%%%%%%%%%%%%%%%%%%%%%%%%%%%%%%%%%%%%%%%%%%
\section{Outlook: combined modalities}\label{sec:combined}

\noindent
  Each of the restoration modalities $\wsmile, \vsmile, \wfrown$ and $\vfrown$
  is equivalent to an affirmation or a classical negation combined with one of
  $\smile, \frown, \Diamond$ and $\Box$.
  It is not fortuitous, indeed, that this gets reflected on the clauses defining the standard translation, in De\-fi\-ni\-tion~\ref{def:st}.
  Furthermore, the simulation conditions of the restoration modalities are
  closely related to the simulation conditions of their \guillemotleft ingredients\guillemotright.
  We briefly sketch a line of research concerning this pattern, starting with
  the notion of a \emph{combined modality}.

\begin{definition}
  Let $\aff$ be a classical-like modal \emph{affirmation} operator, that is, assume that $\aff\phi$ and $\phi$ are true at precisely the same worlds.
  Let $\Omega := \{ \aff, \classneg, \Diamond, \Box, \smile, \frown \}$.
  For each $\Star_1, \Star_2 \in \Omega$, we specify the \emph{combined modalities}
  $\combor{\Star_1}{\Star_2}$ and $\comband{\Star_1}{\Star_2}$ semantically by
  \begin{align*}
    \mo{M}, w \Vdash \comband{\Star_1}{\Star_2}\phi
      &\iff \mo{M}, w \Vdash \Star_1 \phi \text{ and } \mo{M}, w \Vdash \Star_2 \phi \\
    \mo{M}, w \Vdash \combor{\Star_1}{\Star_2}\phi
      &\iff \mo{M},w\Vdash \Star_1 \phi \text{ or } \mo{M}, w \Vdash \Star_2 \phi
  \end{align*}
  We will refer to $\Star_1$ and $\Star_2$ as the \emph{components} of the
  combined modality.
\end{definition}

\begin{example}\label{ex:undef}
  Each of the restoration modalities $\wsmile, \vsmile, \wfrown$ and $\vfrown$
  may be viewed as a combined modality. For example
  the modality for essentiality $\wsmile$ can be written as $\combor{\classneg}{\Box}$ and the modality for accidentality $\vsmile$ is the same as $\comband{\aff}{\smile}$.
  Additionally, combined modalities encompass the contingency
  modality $\triangledown = \comband{\Diamond}{\smile}$
  and the non-contingency modality ${\vartriangle} = \combor{\Box}{\frown}$~\cite{MonRou66,Cre88}.
  We note that $\Box$ is not in general definable from $\triangledown$ or $\vartriangle$~\cite{kuhn:1995:non-contingency,humberstone:1995:non-contingency}, nor from $\wsmile$ and $\vsmile$ \cite{Mar04:accident}, even in the presence of a classic-like negation.
\end{example}

  Each of the operators in $\Omega$ comes with a familiar simulation
  condition, depicted in the left column of Figure~\ref{fig:sim} and
  in Figure~\ref{fig:sim-C} below.

\begin{figure}[h!]
  \centering
    \begin{tikzpicture}[scale=.9, arrows=-latex]
      %% nodes figure 1
        \node (w) at (0,0) {$w$};
        \node (v) at (2,0) {$v$};
      %% edges figure 1
        \draw (w) to node[below]{\fns{$\Sim$}} (v);
        \draw[densely dotted, bend left=40] (w) to node[above]{\fns{$\Sim$}} (v);
      %% label 1
        \node at (1,-.85) {\Sim${\aff}$};
      %% nodes figure 2
        \node (w) at (4,0) {$w$};
        \node (v) at (6,0) {$v$};
      %% edges figure 2
        \draw (w) to node[below]{\fns{$\Sim$}} (v);
        \draw[densely dotted, bend right=40] (v) to node[above]{\fns{$\Sim$}} (w);
      %% label 2
        \node at (5,-.85) {\Sim${\classneg}$};
      %% nodes figure 3
        \node (w) at (8,0) {$w$};
        \node (v) at (10,0) {$v$};
        \node (s) at (8,1.5) {$s$};
        \node (t) at (10,1.5) {$t$};
      %% edges figure 3
        \draw (w) to node[below]{\fns{$\Sim$}} (v); 
        \draw[densely dotted] (v) to node[right]{\fns{$R$}} (t);
        \draw (w) to node[left]{\fns{$R$}} (s);
        \draw[densely dotted] (s) to node[above]{\fns{$\Sim$}} (t);
      %% label 3
        \node at (9,-.85) {\Sim${\Diamond}$};
      %% nodes figure 4
        \node (w) at (12,0) {$w$};
        \node (v) at (14,0) {$v$};
        \node (s) at (12,1.5) {$s$};
        \node (t) at (14,1.5) {$t$};
      %% edges figure 4
        \draw (w) to node[below]{\fns{$\Sim$}} (v); 
        \draw (v) to node[right]{\fns{$R$}} (t);
        \draw[densely dotted] (w) to node[left]{\fns{$R$}} (s);
        \draw[densely dotted] (s) to node[above]{\fns{$\Sim$}} (t);
      %% label 4
        \node at (13,-.85) {\Sim${\Box}$};
    \end{tikzpicture}
    \caption{The simulation conditions for $\aff, \classneg, \Box$ and $\Diamond$.}
    \label{fig:sim-C}
\end{figure}

  We can now observe a pattern in the de\-fi\-ni\-tion of the simulation conditions
  for the restoration modalities from Definition~\ref{def:sim}: 
  each arises from taking the simulation conditions for the components by adding a \emph{gluing condition}. For example, in the case of $\wsmile = \comband{\classneg}{\Box}$ we have:
  \begin{center}
    \begin{tikzpicture}[scale=.9, arrows=-latex]
      %% nodes figure 1
        \node (w) at (0,0) {$w$};
        \node (v) at (2,0) {$v$};
        \node (s) at (0,1.5) {$s$};
        \node (t) at (2,1.5) {$t$};
      %% edges figure 1
        \draw (w) to node[below]{\fns{$\Sim$}} (v); 
        \draw (v) to node[right]{\fns{$R$}} (t);
        \draw[dashed, bend left=35] (v) to node[left,pos=.7]{\fns{$\Sim$}} (t);
        \draw[densely dotted, bend right=30] (v) to node[above,pos=.6]{\fns{$\Sim$}} (w);
        \draw[densely dotted] (w) to node[left]{\fns{$R$}} (s);
        \draw[densely dotted] (s) to node[above]{\fns{$\Sim$}} (t);
      %% label 1
        \node at (1,-.85) {\eqref{it:sim-wsmile}};
      %% equals
        \node at (3.2,.75) {$=$};
      %% nodes figure 2
        \node (w) at (4.4,0) {$w$};
        \node (v) at (6.4,0) {$v$};
        \node (t) at (6.4,1.5) {$t$};
      %% edges figure 2
        \draw (w) to node[below]{\fns{$\Sim$}} (v); 
        \draw (v) to node[right]{\fns{$R$}} (t);
        \draw[dashed, bend left=35] (v) to node[left]{\fns{$\Sim$}} (t);
      %% label 2
        \node at (5.4,-.85) {gluing condition};
      %% or
        \node at (7.6,.75) {or};
      %% nodes figure 3
        \node (w) at (8.8,0) {$w$};
        \node (v) at (10.8,0) {$v$};
      %% edges figure 3
        \draw (w) to node[below]{\fns{$\Sim$}} (v);
        \draw[densely dotted, bend right=35] (v) to node[above]{\fns{$\Sim$}} (w);
      %% label 3
        \node at (9.8,-.85) {$\Sim_{\classneg}$};
      %% plus
        \node at (12,.75) {$+$};
      %% nodes figure 4
        \node (w) at (13.2,0) {$w$};
        \node (v) at (15.2,0) {$v$};
        \node (s) at (13.2,1.5) {$s$};
        \node (t) at (15.2,1.5) {$t$};
      %% edges figure 4
        \draw (w) to node[below]{\fns{$\Sim$}} (v); 
        \draw[densely dotted] (w) to node[left]{\fns{$R$}} (s);
        \draw (v) to node[right]{\fns{$R$}} (t);
        \draw[densely dotted] (s) to node[above]{\fns{$\Sim$}} (t);
      %% label 4
        \node at (14.2,-.85) {$\Sim_{\Box}$};
    \end{tikzpicture}
  \end{center}
  (Here the \guillemotleft$+$\guillemotright\ binds stronger than the \guillemotleft or\guillemotright.)
  The dashed lines in Figure~\ref{fig:sim} indicate the gluing conditions
  for the simulation conditions of $\wfrown, \vsmile$ and $\vfrown$.
  A similar gluing condition appears in the bisimulations for the non-contingency
  modality~\cite{FanWanDit14}, stating that any two successors of $w$
  are connected by the simulation:
  \begin{center}
    \begin{tikzpicture}[scale=.9, arrows=-latex]
      %% nodes figure 1
        \node (w) at (0,0) {$w$};
        \node (v) at (2,0) {$v$};
        \node (s) at (-.75,1.5) {$s$};
        \node (sp) at (.75,1.5) {$s'$};
      %% edges figure 1
        \draw (w) to node[below]{\fns{$\Sim$}} (v); 
        \draw[dashed, bend left=5] (s) to node[above]{\fns{$\Sim$}} (sp);
        \draw (w) to node[left]{\fns{$R$}} (s);
        \draw (w) to node[right]{\fns{$R$}} (sp);
      %% label 1
        % \node at (1,-.85) {\eqref{it:sim-wsmile}};
    \end{tikzpicture}
  \end{center}

  This suggests a general approach towards constructing (bi)simulations for combined
  modalities. The main challenge in this line of research would be finding
  a suitable gluing condition for each combined modality.
  The simulations studied in the present paper provide examples that may help
  towards discovering this general pattern, if it exists.%

%%%%%%%%%%%%%%%%%%%%%%%%%%%%%%%%%%%%%%%%%%%%%%%%%%%%%%%%%%%%%%%%%%%%%%%%%%%%%%%%
\section{Conclusion}\label{sec:conc}

\noindent
In this work, we investigated simulations for logics equipped with subclassical negations and their associated restoration modalities. Parametric in a restorative similarity type, we developed appropriate notions of simulation, and established both an intrinsic (Hennessy--Milner-type) and a relative (van Benthem-type) characterization theorem.
These results were obtained for a range of known as well as novel modal logics.
The findings pave the way for various avenues of further research, including the following:

\paragraph{Combined modalities.}
  While our approach already encompasses a wide variety of logics, in 
  Section~\ref{sec:combined} we speculated about an even more general perspective on simulations and their associated characterization results.
  Pursuing this line of research requires a deeper understanding of the underlying pattern of \guillemotleft gluing conditions\guillemotright.
  The concrete instances examined in the present paper may serve as useful case studies in uncovering this pattern.
  Taking this one step further, one could even aim to extend the analysis to
  combined modalities constructed from repeated use of $||$ and $\&$.
  
\paragraph{Relation to coalgebraic logic.}
  In the setting of classical coalgebraic logic, i.e.~coalgebraic logic
  used to model modal extensions of classical propositional logic,
  $\Lambda$-bisimulations provide a generic notion of bisimulation for
  an arbitrary modal signature~\cite{BakHan17}.
  In the special case of non-contingency logic, the resulting notion was shown to be closely aligned with the existing one, studied in~\cite{FanWanDit14,BakDitHan17}.
  It would be worthwhile to investigate how $\Lambda$-bisimulations
  behave in the presence of restoration modalities.

  A generalization of $\Lambda$-bisimulations beyond classical coalgebraic logic was proposed in~\cite{GroHanKur20}. However, this extension does not always yield an intrinsic characterization result for modal extensions of positive logic
  (see~\cite[Remark~4.6]{GroHanKur20}).
  Starting instead with a dual adjunction between the category
  of sets and the category of distributive lattices
  ---the latter being the setting adopted in the present paper--- may provide a solution to this issue.

\paragraph{More on restorative modal logics.}
  In an orthogonal direction, further model-theoretic results could be pursued, such as a finite-model property or Sahlqvist-style theo\-rems for restoration modalities. For the latter, one could draw on Sahlqvist's original result~\cite{Sah75} and its topological proof~\cite{SamVac89}, as well as on its adaptation to positive modal logic~\cite{CelJan99}.

%%%%%%%%%%%%%%%%%%%%%%%%%%%%%%%%%%%%%%%%%%%%%%%%%%%%%%%%%%%%%%%%%%%%%%%%%%%%%%%%
\footnotesize
\bibliographystyle{plainnat}
\bibliography{references.bib}

\end{document}